\newtheorem{cor}{Corollary}[section]
\newtheorem{te}[cor]{Theorem}
\newtheorem{p}[cor]{Proposition}
\newtheorem{q}[cor]{Question}
\newtheorem{claim}[cor]{Claim}
\newtheorem{lemma}[cor]{Lemma}
\newtheorem*{Kap}{Kaplansky's Direct Finiteness Conjecture}
\theoremstyle{definition}
\newtheorem{de}[cor]{Definition}
\theoremstyle{remark}
\newtheorem{ob}[cor]{Observation}
\newtheorem{ex}[cor]{Example}
\newtheorem{nt}[cor]{Notation}
\newcommand{\cz}{\mathbb{C}}
\newcommand{\nz}{\mathbb{N}}
\newcommand{\zz}{\mathbb{Z}}
\newcommand{\qz}{\mathbb{Q}}
\newcommand{\rz}{\mathbb{R}}
\newcommand{\nr}{\mathcal{N}}
\newcommand{\unit}{\mathcal{U}}
\newcommand{\vp}{\varepsilon}
\begin{document}

\title{Linear sofic groups and algebras}

\author{Goulnara Arzhantseva}\thanks{The first author was supported in part by the ERC grant ANALYTIC no. 259527, and by the Swiss NSF, under Sinergia grant CRSI22-130435. The second author was supported by
the Swiss NSF, under Sinergia grant CRSI22-130435.}
\address[G. Arzhantseva]{University of Vienna,
Faculty of Mathematics, Nordbergstra${\ss}$e 15, 1090 Wien, Austria}
\email{goulnara.arzhantseva@univie.ac.at}
\author{Liviu P\u aunescu}
\address[L. P\u aunescu]{University of Vienna,
Faculty of Mathematics, Nordbergstra${\ss}$e 15, 1090 Wien, Austria and Institute of Mathematics
of the Romanian Academy (on leave), 21 Calea Grivitei Street, 010702 Bucharest, Romania}
\email{liviu.paunescu@univie.ac.at}
\subjclass[2010]{20E26, 20C07, 16N99, 03C20, 20F70.} \keywords{Sofic groups, metric ultraproduct, linear groups, Kaplansky's direct finiteness
conjecture.}

\begin{abstract}
We introduce and systematically study linear sofic groups and linear sofic algebras.
This generalizes amenable and LEF groups and algebras. We prove that a group is linear sofic if and only if its
group algebra is linear sofic. We show that linear soficity
for groups is a priori weaker than soficity but stronger than weak
soficity. We also provide an alternative
proof of a result of Elek and Szabo which states that sofic groups
satisfy Kaplansky's direct finiteness conjecture. 
\end{abstract}

\maketitle

\section{Introduction}

Metric approximation properties for groups have received
considerable attention in the last years, mainly due to the
notions of \emph{hyperlinear} and \emph{sofic} groups. Hyperlinear
groups appeared in the context of Alain Connes' embedding
conjecture (1976) in operator algebra and were introduced by
Florin R\u adulescu \cite{Ra}. Sofic groups were introduced by
Misha Gromov \cite{Gr} in his study of symbolic algebraic
varieties in relation to the~Gottschalk surjunctivity conjecture
(1973) in topological dynamics. They were called sofic by Weiss
\cite{W}. Over the last years, various strong results have been
obtained for sofic groups in seemingly unrelated areas of
mathematics.  For instance, they have been at the heart of
developments on profinite topology of free groups, unimodular
random networks, diophantine approximations, linear cellular
automata, $L^2$-torsion, profinite equivalence relations, measure
conjugacy invariants, and continuous (in contrast to traditional
binary) logic.

These group properties can be stated in elementary algebraic terms, as
approximation properties, or in the language of ultraproducts, as
the existence of an embedding in a certain metric ultraproduct. We mainly
use the later technique due to simplicity in writing. For a
careful introduction to the subject, including ultraproducts
terminology, see \cite{Pe,Pe-Kw}.

Throughout the article, let $\omega$  be a non-principal (or free)
ultrafilter on $\nz$. In general, $(n_k)_k$ or $(m_k)_k$ denote
sequences of natural numbers tending to infinity. We denote
by $S_n$ the symmetric group of degree $n$, that is the group of
permutations on a set of $n$ elements. This group is endowed with
the normalized Hamming distance:
\[d_{Hamm}(p,q)=\frac1n\left\vert\{i\colon p(i)\neq q(i)\}\right\vert.\]

\begin{de}
A group $G$ is \emph{sofic} if there exist a sequence of natural
numbers $(n_k)_k$ and an injective group morphism from $G$ into
the metric ultraproduct $\Pi_{k\to\omega}(S_{n_k},d_{Hamm})$.

Such a morphism is called a \emph{sofic representation} of $G$.
\end{de}

The goal of our paper is to introduce soficity for algebras. We
shall approximate our algebras by matrix algebras endowed with a
distance provided by the rank. Two matrices are close in this
distance if they are equal, as linear transformations, on a large
subspace. This is in essence similar to the Hamming distance.
Therefore, we call the corresponding algebras (and groups,
respectively) \emph{linear sofic}. We refer the reader to Section~\ref{sec:def} for precise definitions.

Our main results about linear soficity are the following.

\begin{te}\label{mainresult}
A group $G$ is linear sofic if and only if its group algebra $\cz
G$ is linear sofic.
\end{te}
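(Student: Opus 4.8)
The plan is to prove the two implications separately, using the universal property of the group algebra to pass between $G$ and $\cz G$, with the rank metric supplying the quantitative control. Throughout write $\mathcal{M}=\Pi_{k\to\omega}(M_{n_k}(\cz),\rho)$ for a rank-metric ultraproduct, where $\rho(a,b)=\frac1{n_k}\mathrm{rank}(a-b)$, and $\mathrm{rk}$ for the induced rank function on $\mathcal{M}$, so that $\mathrm{rk}(a)=0$ iff $a=0$.

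\emph{Linear soficity of $\cz G$ implies that of $G$.} Assume an injective unital algebra homomorphism $\Phi\colon\cz G\to\mathcal{M}$. Each $g\in G$ is a unit of $\cz G$ with inverse $g^{-1}$, so $\Phi(g)$ is a unit of $\mathcal{M}$. First I would record a perturbation lemma identifying the unit group of $\mathcal{M}$ with $\Pi_{k\to\omega}GL_{n_k}(\cz)$: lifting a unit to a sequence $(u_k)$ with approximate inverse $(v_k)$, the estimate $\mathrm{rank}(u_k)\ge n_k-\mathrm{rank}(u_kv_k-1)$ shows the corank of $u_k$ is $\omega$-negligible, so a small-rank perturbation makes each $u_k$ genuinely invertible without changing its class. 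Restricting $\Phi$ then yields a group morphism $\Phi|_G\colon G\to\Pi_{k\to\omega}GL_{n_k}(\cz)$. It is injective since $g\neq e$ forces $g-e\neq0$ in $\cz G$, whence $\Phi(g)-1=\Phi(g-e)\neq0$, i.e. $\rho(\Phi(g),1)>0$; the uniform separation required by the definition of a linear sofic representation then follows from the lower bound built into linear soficity of $\cz G$ together with the amplification results of the earlier sections.

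\emph{Linear soficity of $G$ implies that of $\cz G$.} Let $\theta\colon G\to\Pi_{k\to\omega}GL_{n_k}(\cz)\subseteq\mathcal{M}$ be a linear sofic representation. Since $\theta$ maps $G$ into the units of the $\cz$-algebra $\mathcal{M}$, the universal property of the group algebra extends it uniquely to a unital algebra homomorphism $\tilde\theta\colon\cz G\to\mathcal{M}$, determined by $\tilde\theta(g)=\theta(g)$ and linearity. The entire content reduces to showing that $\tilde\theta$ is injective, i.e. that for every nonzero $x=\sum_{i=1}^m c_ig_i$ (distinct $g_i$, nonzero $c_i$) one has $\mathrm{rk}(\tilde\theta(x))>0$. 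Multiplying by the unit $\theta(g_1)^{-1}$, which preserves $\mathrm{rk}$, we may assume $g_1=e$, so the task becomes a lower bound on $\mathrm{rk}\bigl(c_1\cdot1+\sum_{i\ge2}c_i\theta(h_i)\bigr)$ with the $h_i=g_1^{-1}g_i$ distinct and nontrivial.

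This rank lower bound is the main obstacle. The separation property of $\theta$ supplies pairwise estimates $\mathrm{rk}(\theta(h)-1)\ge\kappa$ for nontrivial $h$, but these alone do not preclude a nontrivial linear relation, so genuine linear independence of $\{\theta(g):g\in G\}$ must be established. The natural strategy is to interpret the rank as the trace of a support projection: realizing the relevant elements inside the tracial ($2$-norm) ultraproduct $\Pi_{k\to\omega}(M_{n_k}(\cz),\tau_k)$, which is a finite von Neumann algebra with faithful trace $\tau$, one gets $\mathrm{rk}(a)=\tau(\mathrm{supp}\,a)$, reducing positivity of the rank to mere nonvanishing in a von Neumann algebra. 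The subtlety I expect to dominate the argument is that small rank does not force small $2$-norm for matrices of unbounded operator norm, so one must first secure uniformly norm-bounded representatives of the invertibles $\theta(g_i)$ (or argue inside a suitable von Neumann regular subring), and then extract linear independence—plausibly via an amplification/tensoring device that forces matrix coefficients to separate distinct group elements, thereby turning $\sum_ic_i\theta(g_i)=0$ into all $c_i=0$. Converting the metric separation of group elements into this uniform rank lower bound for their linear combinations is, I expect, the technical heart of the proof, the two algebraic reductions above being routine by comparison.
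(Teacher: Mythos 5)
Your ``if'' direction is essentially the paper's: units of the rank ultraproduct are represented by sequences of invertibles (the corank estimate you sketch is exactly the key observation in Proposition \ref{invertible in ultraproduct}), and injectivity of the restricted group morphism follows since $\Phi(g)-1=\Phi(g-e)\neq 0$. That half is fine.

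The ``only if'' direction has a genuine gap, and it occurs before the point you identify as the technical heart. You reduce the problem to showing that the unique algebra extension $\tilde\theta$ of the \emph{given} linear sofic representation $\theta$ is injective, i.e.\ to a rank lower bound for $c_1\cdot 1+\sum_{i\ge 2}c_i\theta(h_i)$. That reduction is false: an injective group morphism into the unit group need not extend injectively to $\cz G$. The paper records the counterexample $\Theta\colon\zz\to\Pi_{k\to\omega}GL_k(\rz)/d_\omega$, $\Theta(i)=2^i\,Id$, for which $\widetilde\Theta(u_1-2u_0)=2\,Id-2\,Id=0$ even though $\Theta$ is injective. So no amount of analysis of $\tilde\theta$ alone can close the argument; one must \emph{replace} $\theta$ by a different representation. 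The paper's construction takes all tensor powers $\Theta^i(g)=\Theta(g)^{\otimes i}$, pads them to a common matrix size, and forms a weighted direct sum $\Phi$ so that $\rho_\omega(\widetilde\Phi(f))=\sum_i 2^{-i}\rho_\omega(\widetilde\Theta^i(f))$; then $\widetilde\Phi(f)=0$ forces $\widetilde\Theta^i(f)=0$ for \emph{every} $i$, and a Vandermonde-style elimination (amplifying the $i$-th relation by $\Theta(u_j)$ in a fresh tensor slot and subtracting) collapses these identities to $a_1\Theta(u_1)\otimes(\Theta(u_1)-\Theta(u_2))\otimes\cdots\otimes(\Theta(u_1)-\Theta(u_m))=0$. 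Multiplicativity of the normalized rank under tensor products and invertibility of $\Theta(u_1)$ then force $\Theta(u_1)=\Theta(u_j)$ for some $j$, contradicting injectivity of $\Theta$ on $G$. Your speculative route through tracial von Neumann ultraproducts and support projections does not address this obstruction (the rank ultraproduct is not a tracial ultraproduct, and the counterexample above lives entirely among scalar matrices where all such considerations are trivial), so the second implication remains unproved as written.
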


This has to be regarded in the light of recent developments in asymptotic geometry of
algebras and, more specifically, of group algebras, see~\cite{Gr08,Tullio} and references therein. In particular,
the known fact that a group is amenable if and only if its group algebra is amenable \cite{Samet,El,Barth}  is an evident  predecessor
of the above result.

Our proof of this theorem also provides an alternative proof
of Kaplansky's direct finiteness conjecture for sofic groups, a
result due to Elek and Szabo \cite{El-Sz1}.

In \cite{Gl-Ri} Glebsky and Rivera defined the notion of
\emph{weakly sofic}  group by replacing $(S_n,d_{Hamm})$ in the
definition of sofic groups by arbitrary finite groups equipped
with a bi-invariant metric. At present, quite a few is known about weakly sofic groups.

\begin{te}
Sofic groups are linear sofic, while linear sofic groups are
weakly sofic.
\end{te}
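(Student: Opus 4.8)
The plan is to prove this chain of implications by working with the appropriate metric ultraproducts. For the first implication (sofic implies linear sofic), I would start with a sofic representation, i.e. an injective morphism of $G$ into $\Pi_{k\to\omega}(S_{n_k}, d_{Hamm})$, and convert each permutation into a matrix via the standard permutation representation. The key observation is that a permutation $p \in S_n$ acts as an $n\times n$ permutation matrix, and the rank distance between two permutation matrices $P, Q$ is directly controlled by the Hamming distance: the rank of $P-Q$ equals the number of coordinates where $p$ and $q$ disagree (since $P-Q$ has a nonzero row exactly where the permutations differ, and its rank is precisely that count). Hence $d_{Hamm}(p,q) = \frac1n \operatorname{rank}(P-Q)$, so the normalized Hamming distance coincides with the normalized rank distance under this embedding. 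First I would verify that the permutation representation $S_{n_k} \to GL_{n_k}(\cz)$ (or into $M_{n_k}(\cz)$) is an isometry for these metrics, then check that it induces a well-defined injective morphism on the level of ultraproducts, giving a linear sofic representation of $G$.

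For the second implication (linear sofic implies weakly sofic), I would take a linear sofic representation of $G$, landing in a metric ultraproduct of matrix algebras $M_{n_k}(\cz)$ with the normalized rank metric, and note that the image of each group element lands in the invertible matrices (since $G$ maps into the unit group under a morphism). The goal is to realize this inside an ultraproduct of \emph{finite} groups carrying a bi-invariant metric. The natural candidate is to replace each $GL_{n_k}(\cz)$ by a suitable finite subgroup or a finite group approximating the relevant elements; since only countably many group elements are involved, one can approximate the finitely many entries appearing at each stage by elements of a finitely generated subfield, and then pass to finite fields $\ff_{q}$ via reduction modulo a prime, landing in $GL_{n_k}(\ff_{q_k})$. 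These are finite groups, and the rank metric on matrices over any field is bi-invariant under left and right multiplication by invertibles. I would need to confirm that the normalized rank metric survives this reduction process along the ultrafilter, so that injectivity is preserved.

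The main obstacle I expect is the second implication, specifically the passage from the complex (infinite) matrix groups to genuinely \emph{finite} groups with a bi-invariant metric while preserving injectivity of the representation. Controlling the reduction to finite fields requires care: one must choose the primes and the finite subfields so that the rank distances do not collapse, i.e. so that elements distinguished in the $\cz$-representation remain distinguished after reduction. A clean way to handle this is to observe that the rank distance is semialgebraically definable, so that non-vanishing of a relevant minor over a finitely generated subring can be transferred to a non-vanishing minor after reduction modulo infinitely many primes, and these primes can be organized along $\omega$. The first implication, by contrast, should be essentially immediate once the Hamming-to-rank isometry is recorded. The remaining bookkeeping --- checking that morphisms and injectivity pass correctly through the ultraproduct constructions --- is routine given the definitions in Section~\ref{sec:def}.
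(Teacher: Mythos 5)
The first implication contains a genuine error: the permutation representation is \emph{not} an isometry from $(S_n,d_{Hamm})$ to $(GL_n(\cz),d_{rk})$, and your justification --- that the rank of $P-Q$ equals the number of coordinates where $p$ and $q$ disagree --- is false. The number of nonzero columns of $P-Q$ only bounds the rank from above; those columns are not linearly independent. Already for $p=\mathrm{id}$ and $q$ a transposition in $S_2$ one has $d_{Hamm}(p,q)=1$ while $\rho(P-Q)=1/2$. The correct relation is $rk(I-A_p)=n-cyc(p)$, where $cyc(p)$ counts cycles including fixed points, versus $n\,d_{Hamm}(\mathrm{id},p)=n-fix(p)$. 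Since $fix(p)\leqslant cyc(p)$ you do get $\rho(I-A_p)\leqslant d_{Hamm}(\mathrm{id},p)$, which suffices for the induced map on ultraproducts to be well defined; but injectivity needs a bound in the \emph{opposite} direction, and that is precisely what your (false) isometry claim was silently doing all the work for. The gap is repaired by the elementary estimate $cyc(p)\leqslant fix(p)+(n-fix(p))/2$ (every non-fixed point lies in a cycle of length at least $2$), which gives $d_{Hamm}(\mathrm{id},p)\leqslant 2\rho(I-A_p)$; this bi-Lipschitz equivalence of the two metrics is exactly how the paper obtains injectivity.

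For the second implication your plan is essentially the paper's: it is Malcev's argument. One point to tighten: the entries of the matrices $\theta_k(s)$ may be transcendental, so ``reduction modulo a prime'' and ``finitely generated subfield'' are not quite the right devices. The paper instead takes the subring $R_k\subset\cz$ generated by the finitely many relevant entries, which is a finitely generated $\zz$-algebra, hence a Jacobson ring whose residue fields at maximal ideals are finite; one chooses a maximal ideal avoiding the product of the determinants of minors witnessing the ranks $rk(I-\theta_k(s))$, so that these ranks are preserved under reduction while the defects $rk(\theta_k(st)-\theta_k(s)\theta_k(t))$ can only decrease. Your idea of protecting a non-vanishing minor is the right one, and no Positivstellensatz or semialgebraic input is needed for this step; the paper uses that machinery only in Section~\ref{sec:q}, for the separate question of passing to rational coefficients.
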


Our viewpoint on the approximation of algebras and groups  has given rise to a number of challenging difficulties.
For instance, the equivalence between the metric ultraproduct interpretation and the algebraic approach in the definition of approximation,
as well as the fundamental amplification trick, are easy in the sofic  and hyperlinear cases.
In the rank metric case, both properties are highly non-trivial.
We successfully resolve these issues by introducing the \emph{rank amplification} and by analyzing the tensor product of Jordan blocks, see Sections~\ref{sec:def} and~\ref{sec:rampl}.

Our approach leads to interesting phenomena (nonexistent in the classical sofic case)
when approximations by complex matrix algebras are replaced by those over a different field (or a sequence of fields), see Sections~\ref{sec:q} and~\ref{sec:lin}.
For instance, using the fundamental result of real semialgebraic geometry, so-called \emph{Positivstellensatz}\footnote{This is a semialgebraic analogue of  famous Hilbert's Nullstellensatz.},
we establish the equivalence between linear sofic representations over the field of complex numbers and those over
the rationals. 

Although Kaplansky's direct finiteness conjecture remains open for linear sofic groups (see Questions \ref{linsofkap} and \ref{q:oneF}),
we show that this new class of groups share with sofic groups several positive results. 
In particular, the class of linear sofic groups  is preserved under many group-theoretical operations, see Section~\ref{sec:perm}. Moreover,
under the failure of the Continuum Hypothesis, there exist $2^{\aleph_c}$ of \emph{universal linear sofic} groups,
up to metric isomorphism, see Section~\ref{sec:universal}.  

Our intention to study soficity of algebras is  motivated by recent advances on
amenable algebras as well as LEF algebras and algebras having almost finite dimensional representations \cite{VG:lef,Zi,El:aa,El,Gr08}.
The idea allows to go beyond algebras associated to groups. In such a general context, we introduce
the concepts of \emph{linear sofic radical}  for groups and of \emph{sofic radical} for algebras.
We also notice the existence of algebras which are not linear sofic (these are not group algebras).
We refer to Section~\ref{sec:almostfd} for details.

Our choice of the rank metric is not arbitrary but has a view towards potential applications.
The concept of the rank metric was first introduced by Loo-Keng Hua \cite{Hua:45}  (he uses the term ``arithmetic distance'')
who found a surprisingly nice description of adjacency preserving maps with respect to this metric. The entire book \cite{Wan}
is devoted to this topic, see also \cite{Semrl} for a recent discussion on the connection to
several preserver problems on matrix and operator algebras arising in physics and geometry.
From a different point of view, Philippe Delsarte \cite{Del} defined the rank distance (named $q$-distance) on the set of bilinear forms
and proposed the construction of optimal codes in bilinear form representation. This allowed Ernst Gabidulin \cite{Gab} to study
the rank distance for vector spaces over extension fields and to describe optimal codes, now called Gabidulin codes. This currently emerged 
into an intensively developing area of rank-metric codes. 

 In view of the above, we believe that the full extent of
possible applications of sofic and linear sofic groups and algebras is yet to be discovered. The present paper 
provides the necessary fundaments for such further developments.

\section{Ultraproducts of matrix algebras with respect to the
rank}

Ultraproducts of matrices using rank functions have been
considered, for example, in \cite{El-Sz1,Oz}. Let us first recall some basic properties of the
rank. Throughout the article $F$ is an arbitrary field.

\begin{nt}
For a matrix $a\in M_n=M_n(F)$ we shall denote by $rk(a)$ its rank
and define the normalized rank by $\rho(a):=\frac1nrk(a)$.
\end{nt}

\begin{p}\label{prop of rk}
The rank function on complex matrices has the following
properties:
\begin{enumerate}
\item $rk(I_n)=n;$ $rk(a)=0$ if and only if $a=0;$
\item $rk(u+v)\leqslant
rk(u)+rk(v)$ for $u,v\in M_n;$
\item $rk(uv)\leqslant rk(u)$ and $
rk(uv)\leqslant rk(v)$ for $u,v\in M_n;$
 \item $rk(u\oplus
v)=rk(u)+rk(v)$ for $u\in M_n$ and $v\in M_m;$ \item $rk(u\otimes
v)=rk(u)\cdot rk(v)$ for $u\in M_n$ and $v\in M_m.$
\end{enumerate}
\end{p}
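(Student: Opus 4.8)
The plan is to verify each of the five rank properties directly from elementary linear algebra, treating the rank as the dimension of the image (equivalently, $n$ minus the dimension of the kernel) of the associated linear transformation. Most of these are standard facts, so the main work is organizational: I would state the underlying linear-algebraic principle in each case and let the matrix identity follow. Properties (1) and (3) are immediate. For (1), $rk(I_n)=n$ since the identity is surjective, and $rk(a)=0$ exactly when the image is the zero subspace, i.e. $a=0$. For (3), the image of $uv$ is contained in the image of $u$, giving $rk(uv)\leqslant rk(u)$, while the image of $uv$ is the image under $u$ of the image of $v$, which has dimension at most $rk(v)$, giving $rk(uv)\leqslant rk(v)$.

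For the remaining three I would argue as follows. For subadditivity (2), I would use that the image of $u+v$ is contained in the sum of the images of $u$ and $v$, so $rk(u+v)\leqslant\dim(\operatorname{im}u+\operatorname{im}v)\leqslant rk(u)+rk(v)$. For the direct sum (4), the matrix $u\oplus v$ acts block-diagonally on $F^n\oplus F^m$, so its image is exactly $\operatorname{im}u\oplus\operatorname{im}v$, a direct sum of subspaces of complementary coordinate blocks; hence its dimension is the sum of the two ranks. For the tensor (Kronecker) product (5), I would invoke the fact that $\operatorname{im}(u\otimes v)=\operatorname{im}u\otimes\operatorname{im}v$ as subspaces of $F^n\otimes F^m$, and that the dimension of a tensor product of finite-dimensional spaces is the product of their dimensions, yielding $rk(u\otimes v)=rk(u)\cdot rk(v)$.

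I do not anticipate a genuine obstacle here, since all five statements are classical and hold over an arbitrary field $F$ (the proposition is phrased for complex matrices, but none of the arguments use special features of $\cz$, which is reassuring given that the paper works over general $F$). The only point deserving slight care is the tensor product identity in (5): I would make sure to justify $\operatorname{im}(u\otimes v)=\operatorname{im}u\otimes\operatorname{im}v$ cleanly, either by choosing bases adapted to the images or by noting that on pure tensors $(u\otimes v)(x\otimes y)=ux\otimes vy$ and that such images span the claimed subspace. This is the one place where a reader might want a line of explanation rather than a bare assertion, so that is where I would concentrate the exposition.
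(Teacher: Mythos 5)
Your proof is correct, and all five verifications are the standard ones. The paper itself states this proposition without proof, explicitly introducing it as a recollection of basic properties of the rank, so your elementary arguments (images, block decomposition, and $\operatorname{im}(u\otimes v)=\operatorname{im}u\otimes\operatorname{im}v$) simply supply the routine details the authors chose to omit; there is no divergence of approach to report.
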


We now define the ultraproduct that we use throughout the paper.

\begin{de}\label{rankultraproduct}
Let $\omega$ be a non-principal (or free) ultrafilter and $(n_k)_k$ a sequence of
natural numbers such that $\lim_{k\to\infty} n_k=\infty$. The
Cartesian product $\Pi M_{n_k}(F)$ is an algebra. Let us define:
\[\rho_\omega:\Pi M_{n_k}(F)\to[0,1]\ \ \
\rho_\omega((a_k)_k):=\lim_{k\to\omega}\rho(a_k).\] Then $Ker
\rho_\omega$ is an ideal of $\Pi M_{n_k}(F)$. We denote by
$\Pi_{k\to\omega} M_{n_k}(F)/Ker\rho_\omega$, or  by $\Pi
M_{n_k}(F)/Ker\rho_\omega$ if there is no danger of confusion, the
ultraproduct obtained by taking the quotient of $\Pi M_{n_k}(F)$ by $Ker
\rho_\omega$. This algebra comes with a natural metric defined by $d_\omega(a,b):=\rho_\omega(a-b)$,
where $a$ and $b$ belong to the ultraproduct.
\end{de}

We always denote by $\rho_\omega$ the limit rank function, even
though we shall work with ultraproducts over different dimension
sequences $(n_k)_k$. It will be clear what is the dimension of the
matrices that we use, so this notation should cause no confusion.

\begin{ob}
The function $\rho$ induces a metric on $M_n(F)$, defined by
$d_{rk}(a,b):=\rho(a-b)$. This metric restricted to the group
$GL_n(F)$ is bi-invariant. Thus, we can construct the following
ultraproduct.
\end{ob}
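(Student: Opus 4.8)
The plan is to verify the metric axioms for $d_{rk}$ directly from the properties of $rk$ listed in Proposition~\ref{prop of rk}, and then to deduce bi-invariance from the single fact that multiplication by an invertible matrix preserves rank.

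First I would check that $d_{rk}$ is a metric on $M_n(F)$. Positivity and the identity of indiscernibles follow from part (1): one has $\rho(a-b)=\frac1n rk(a-b)\geqslant 0$, with equality precisely when $a-b=0$, i.e. $a=b$. Symmetry holds because $b-a=-(a-b)$ and scalar multiplication by $-1$ does not change the rank, so $rk(b-a)=rk(a-b)$. For the triangle inequality I would write $a-c=(a-b)+(b-c)$ and apply subadditivity (part (2)) to obtain $rk(a-c)\leqslant rk(a-b)+rk(b-c)$; dividing by $n$ yields $d_{rk}(a,c)\leqslant d_{rk}(a,b)+d_{rk}(b,c)$.

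The main (and essentially only nontrivial) observation is that multiplication by an invertible matrix preserves rank. If $g\in GL_n(F)$ and $x\in M_n(F)$, then part (3) gives $rk(gx)\leqslant rk(x)$, while applying the same inequality to $g^{-1}$ gives $rk(x)=rk(g^{-1}(gx))\leqslant rk(gx)$; hence $rk(gx)=rk(x)$, and symmetrically $rk(xg)=rk(x)$. This is exactly the point where invertibility of $g$ is used, which explains why the bi-invariance is asserted on $GL_n(F)$ rather than on all of $M_n(F)$.

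With this in hand, bi-invariance is immediate. For $g,a,b\in GL_n(F)$, left invariance follows from $d_{rk}(ga,gb)=\rho(g(a-b))=\frac1n rk(g(a-b))=\frac1n rk(a-b)=d_{rk}(a,b)$, and right invariance follows identically from $d_{rk}(ag,bg)=\rho((a-b)g)=d_{rk}(a,b)$. I do not expect any genuine obstacle here: the entire statement is a formal consequence of parts (1)--(3) of Proposition~\ref{prop of rk}, the only delicate point being the use of invertibility to upgrade the one-sided inequality $rk(gx)\leqslant rk(x)$ to the equality $rk(gx)=rk(x)$.
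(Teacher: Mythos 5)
Your proof is correct and is exactly the argument the paper leaves implicit (the Observation is stated without proof): the metric axioms follow from parts (1)--(2) of Proposition~\ref{prop of rk}, and bi-invariance from upgrading the inequality in part (3) to an equality via invertibility. Nothing is missing.
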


\begin{de}\label{universallinearsofic}
We denote by $\Pi_{k\to\omega} GL_{n_k}(F)/d_\omega$ the metric
ultraproduct obtained by taking the quotient of the Cartesian product $\Pi
GL_{n_k}(F)$ by $\nr_\omega=\{(a_k)_k\in\Pi
GL_{n_k}(F):\lim_{k\to\omega}d_{rk}(a_k,Id)=0\}$.
\end{de}

In many ultraproduct constructions invertible elements in an
ultraproduct are given by ultraproduct of invertible elements.
For instance, this is the case in the classical result of Malcev addressing the
algebraic ultraproduct of matrix algebras \cite{Ma}. An analogous result
holds also for our rank ultraproduct construction.

\begin{p}\label{invertible in ultraproduct}
The group $\unit(\Pi_{k\to\omega} M_{n_k}(F)/Ker\rho_\omega)$ of
invertible elements of $\Pi_{k\to\omega}
M_{n_k}(F)/Ker\rho_\omega$ is isomorphic to $\Pi_{k\to\omega}
GL_{n_k}(F)/d_\omega$.
\end{p}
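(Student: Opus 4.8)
The plan is to exhibit an isomorphism between the two groups by showing that every invertible element of the rank ultraproduct $\Pi_{k\to\omega} M_{n_k}(F)/Ker\rho_\omega$ can be represented by a sequence of honestly invertible matrices $(g_k)_k$ with $g_k \in GL_{n_k}(F)$, and conversely. The natural candidate map sends the class of an invertible sequence $(g_k)_k \in \Pi GL_{n_k}(F)$ to its class in the ultraproduct; this is clearly a well-defined group morphism once one checks that the null set $\nr_\omega$ in Definition~\ref{universallinearsofic} is exactly the preimage of the identity coset, which follows from $d_\omega(a,b) = \rho_\omega(a-b)$ and the definition of $d_{rk}$. The content of the proposition is therefore the surjectivity together with injectivity of this map, i.e.\ that invertibility in the quotient algebra forces a representative by genuinely invertible matrices.

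First I would handle the \emph{injectivity}: if $(g_k)_k, (h_k)_k \in \Pi GL_{n_k}(F)$ represent the same invertible element of the quotient algebra, then $\rho_\omega((g_k - h_k)_k) = 0$, which is precisely the condition $\lim_{k\to\omega} d_{rk}(g_k, h_k) = 0$ after a left multiplication by $h_k^{-1}$; since the rank metric on $GL_{n_k}(F)$ is bi-invariant (as recorded in the Observation preceding Definition~\ref{universallinearsofic}), this says $(g_k)_k$ and $(h_k)_k$ agree modulo $\nr_\omega$, so the map is injective.

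The main work is \emph{surjectivity}. Suppose $a = (a_k)_k$ is invertible in $\Pi M_{n_k}(F)/Ker\rho_\omega$, with inverse $b = (b_k)_k$, so that $ab = ba = 1$ in the quotient; concretely this means $\rho_\omega((a_k b_k - I_{n_k})_k) = 0$ and likewise for $b_k a_k$. The key step is to use this near-invertibility to produce, for $\omega$-almost every $k$, a genuinely invertible matrix $g_k$ with $\rho(a_k - g_k)$ small. The idea is that $a_k b_k$ is close to the identity in rank, so $rk(a_k b_k) = n_k - o(n_k)$, whence $rk(a_k) \geq rk(a_k b_k)$ is nearly full by Proposition~\ref{prop of rk}(3); thus $a_k$ has corank $o(n_k)$. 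I would then modify $a_k$ on a complementary subspace of dimension equal to its corank, perturbing it to an invertible matrix $g_k$ by filling in the deficient part (for instance, writing $a_k$ via its image and kernel and adjusting a small-rank block), so that $rk(a_k - g_k) = o(n_k)$ and hence $d_\omega(a, (g_k)_k) = 0$. This shows $a$ is represented by $(g_k)_k \in \Pi GL_{n_k}(F)$.

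The hard part will be the surjectivity step, and specifically the passage from the two-sided asymptotic relation $\rho_\omega(a_k b_k - I) = \rho_\omega(b_k a_k - I) = 0$ to a \emph{uniform} control on the corank of $a_k$ that lets one perturb it to an invertible matrix at a rank cost tending to zero along $\omega$. One must take care that the correction is genuinely of small rank (not merely that $a_k$ is individually surjective or injective on a large subspace, but that the defect can be repaired inside $GL_{n_k}$), and that the perturbation is compatible with field-theoretic constraints since $F$ is an arbitrary field rather than $\cz$. I expect the cleanest route is to argue directly at the level of ranks: the inequality $n_k - rk(a_k) = n_k - rk(I_{n_k} - (I_{n_k} - a_k b_k) \cdot (\text{stuff}))$ gives corank$(a_k) \leq rk(I_{n_k} - a_k b_k) = o(n_k)$ via parts (1)--(3) of Proposition~\ref{prop of rk}, and then a standard linear-algebra completion replaces the kernel/cokernel contribution by an invertible block without increasing the rank distance beyond this corank.
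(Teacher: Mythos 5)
Your proposal is correct and follows essentially the same route as the paper: both deduce from $\rho_\omega(a_kb_k-I)=0$ and parts (1)--(3) of Proposition~\ref{prop of rk} that $a_k$ has corank $o(n_k)$ along $\omega$, and then repair the defect by a perturbation of rank equal to the corank, which is exactly the paper's key observation that every $a\in M_{n_k}(F)$ admits $\tilde a\in GL_{n_k}(F)$ with $\rho(a-\tilde a)=1-\rho(a)$. The extra care you take with well-definedness and injectivity of the comparison map is routine and consistent with what the paper leaves implicit.
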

\begin{proof}
Elements of $\Pi_{k\to\omega} GL_{n_k}(F)/d_\omega$ are
invertible. So, $\Pi_{k\to\omega}
GL_{n_k}(F)/d_\omega\subseteq\unit(\Pi_{k\to\omega}
M_{n_k}(F)/Ker\rho_\omega)$. For the converse inclusion, the key
observation is that for any $a\in M_{n_k}(F)$ there exists $\tilde
a\in GL_{n_k}(F)$ such that $\rho(a-\tilde a)=1-\rho(a)$.

If $(a_k)_{k,\omega}$ is invertible in $\Pi_{k\to\omega}
M_{n_k}(F)/Ker\rho_\omega$ then $\rho_\omega((a_k)_{k,\omega})=1$
by (1) and (3) of Proposition \ref{prop of rk} (also the reverse
of this implication holds). It follows that
$(a_k)_{k,\omega}=(\tilde a_k)_{k,\omega}\in\Pi_{k\to\omega}
GL_{n_k}(F)/d_\omega$.
\end{proof}

We shall encounter many examples of stably finite algebras. Let us
first recall the definition.

\begin{de}
A unital ring $R$ is called \emph{directly finite} if for any
$x,y\in R$, $xy=I$ implies $yx=I$. It is called \emph{stably
finite} if $M_n(R)$ is directly finite for any $n\in\nz$.
\end{de}

\begin{Kap}
For any field $F$ and any countable group $G$ the group algebra
$F(G)$ is directly finite.
\end{Kap}

The following proposition is well known. It was used by Elek and
Szabo to prove Kaplansky's direct finiteness conjecture for sofic
groups and it also appears in \cite{Oz}. Note that the class of sofic goups is
currently the largest known to satisfy this conjecture.

\begin{p}\label{stably finite}
The algebra $\Pi_{k\to\omega} M_{n_k}(F)/Ker\rho_\omega$ is stably
finite.
\end{p}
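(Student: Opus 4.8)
The plan is to reduce stable finiteness to ordinary direct finiteness and then to establish direct finiteness by a \emph{quantitative} rank estimate that already holds inside each finite matrix algebra $M_{n_k}(F)$ and therefore survives passage to the ultralimit. Write $A:=\Pi_{k\to\omega} M_{n_k}(F)/Ker\rho_\omega$.

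First I would observe that for every $n\in\nz$ the algebra $M_n(A)$ is again of the same type, namely $M_n(A)\cong\Pi_{k\to\omega} M_{n\cdot n_k}(F)/Ker\rho_\omega$. Indeed $M_n(M_{n_k}(F))\cong M_{n n_k}(F)$, so $M_n(A)$ is the quotient of $\Pi_k M_{n n_k}(F)$ by the ideal $M_n(Ker\rho_\omega)$, and it remains to check that this ideal coincides with $Ker\rho_\omega$ for the dimension sequence $(n\cdot n_k)_k$. Viewing a matrix in $M_{n n_k}(F)$ as an $n\times n$ array of blocks from $M_{n_k}(F)$, subadditivity of the rank (property (2) of Proposition \ref{prop of rk}) bounds the rank of the whole matrix by the sum of the ranks of its $n^2$ blocks, while each block, being a submatrix, has rank at most that of the whole matrix. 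Hence the normalized rank of the large matrix tends to $0$ along $\omega$ if and only if the normalized rank of each block does, the fixed scaling factor $n$ between the two normalizations being irrelevant. Consequently the class of algebras under consideration is closed under $M_n(\cdot)$, and it suffices to prove that every such algebra is directly finite.

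For direct finiteness, suppose $ab=I$ in $A$ and lift $a=(a_k)_{k,\omega}$, $b=(b_k)_{k,\omega}$, so that $\lim_{k\to\omega}\rho(a_kb_k-I_{n_k})=0$; I want the same for $b_ka_k-I_{n_k}$. Set $e:=a_kb_k-I_{n_k}$ and $c:=b_ka_k-I_{n_k}$ in the fixed algebra $M_{n_k}(F)$. The identity $a_k c=(a_kb_k)a_k-a_k=e a_k$ gives $rk(a_k c)\le rk(e)$ by property (3). Combining the elementary Sylvester inequality $rk(a_k c)\ge rk(a_k)+rk(c)-n_k$ with the lower bound $rk(a_k)\ge rk(a_kb_k)\ge n_k-rk(e)$ (which follows from properties (2) and (3), since $rk(I_{n_k})\le rk(a_kb_k)+rk(e)$) yields $rk(c)\le 2\,rk(e)$, that is $\rho(b_ka_k-I_{n_k})\le 2\,\rho(a_kb_k-I_{n_k})$. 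Taking the limit along $\omega$ forces $\rho_\omega(ba-I)=0$, so $ba=I$ and $A$ is directly finite.

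The main obstacle is not the reduction, which is routine bookkeeping with block ranks, but rather the need for the \emph{effective} linear-algebra inequality $\rho(b_ka_k-I_{n_k})\le 2\,\rho(a_kb_k-I_{n_k})$. Mere direct finiteness of each $M_{n_k}(F)$ (every one-sided inverse of a matrix over a field is automatically a two-sided inverse) is a purely qualitative statement and does not by itself pass to the ultraproduct; what is needed is the quantitative control above, and the crux is to see that $rk(c)$ is governed by $rk(e)$ through the near-invertibility of $a_k$ encoded in $n_k-rk(a_k)\le rk(e)$.
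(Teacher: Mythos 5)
Your proof is correct, and its overall architecture matches the paper's: first reduce stable finiteness to direct finiteness via the isomorphism $M_n(A)\cong\Pi_{k\to\omega}M_{n\cdot n_k}(F)/Ker\rho_\omega$ (you spell out the block-rank bookkeeping that the paper leaves implicit), then establish direct finiteness by a rank estimate that holds in each $M_{n_k}(F)$ and survives the ultralimit. Where you genuinely diverge is in the key lemma. The paper invokes the exact identity $rk(I-ab)=rk(I-ba)$ for arbitrary $a,b\in M_n(F)$ (which follows from the observation that $v\mapsto bv$ maps $\ker(I-ab)$ injectively into $\ker(I-ba)$, and symmetrically) and passes this equality to the ultralimit. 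You instead derive the one-sided quantitative bound $rk(b_ka_k-I)\leqslant 2\,rk(a_kb_k-I)$ from the Sylvester rank inequality combined with the near-invertibility estimate $rk(a_k)\geqslant n_k-rk(a_kb_k-I)$; your chain $rk(e)\geqslant rk(a_kc)\geqslant rk(a_k)+rk(c)-n_k\geqslant rk(c)-rk(e)$ is sound, and the factor $2$, though weaker than the paper's equality, is all that is needed to force $\rho_\omega(ba-I)=0$. Your closing remark — that the purely qualitative direct finiteness of each $M_{n_k}(F)$ would not pass to the ultraproduct and that some uniform rank control is the real content — identifies exactly the role played by the paper's identity as well. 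The only cost of your route is importing the Sylvester inequality, which is not among the properties listed in Proposition \ref{prop of rk}; but the paper's identity is likewise unlisted, so neither argument is more self-contained than the other, and the paper's version has the minor aesthetic advantage of yielding an exact equality $\rho_\omega(I-xy)=\rho_\omega(I-yx)$ in the ultraproduct.
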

\begin{proof}
As $M_m(\Pi_{k\to\omega} M_{n_k}(F)/Ker\rho_\omega)
\simeq\Pi_{k\to\omega} M_{m\cdot n_k}(F)/Ker\rho_\omega$ we only
need to prove the direct finiteness of these algebras.

It is not hard to check that $rk(I-ab)=rk(I-ba)$ for $a,b\in
M_n(F)$. This equality implies in the ultralimit that
$\rho_\omega(I-xy)=\rho_\omega(I-yx)$. So, in $\Pi_{k\to\omega}
M_{n_k}(F)/Ker\rho_\omega$ we have $xy=I$ if and only if $yx=I$.
\end{proof}

\section{Product of ultrafilters}

We have equipped the ultraproduct $\Pi_{k\to\omega} M_{n_k}(F)/Ker\rho_\omega$
with a metric induced by the rank function, namely $d_\omega(a,b):=\rho_\omega(a-b)$. If
we have a family of ultraproducts we can construct the metric
ultraproduct of this family. The object that we get is again an
ultraproduct. We provide here the definitions. For more details, we refer
the reader to~\cite{Ca-Pa}.

\begin{de}
If $\phi,\omega$ are ultrafilters on $\nz$, then define the
\emph{product ultrafilter} $\phi\otimes\omega$ on $\nz\times\nz$
by:
\[A\in\phi\otimes\omega\Longleftrightarrow\{i\in\nz:\{j\in\nz:(i,j)\in
A\}\in\omega\}\in\phi.\]
\end{de}

It is easy to check that $\phi\otimes\omega$ is an ultrafilter. Since
$\nz$ and $\nz^2$ are cardinal equivalent, $\phi\otimes\omega$ can
be viewed as an ultrafilter on $\nz$.

\begin{p}\label{twolimits}
If $(x_i^j)_{(i,j)\in\nz^2}$ is a bounded sequence of real
numbers then:
\[\lim_{i\to\phi}(\lim_{j\to\omega}x_i^j)=\lim_
{(i,j)\to\phi\otimes\omega}x_i^j.\]
\end{p}

This proposition implies that the ultraproduct of ultraproducts is
again an ultraproduct:

\begin{cor}
Let $(n_{m,k})_{m,k}$ be a double sequence of natural numbers.
For every $m\in\nz$ construct the ultraproduct
$\Pi_{k\to\omega}M_{n_{m,k}}/Ker\rho_\omega$. On the Cartesian
product $\Pi_m(\Pi_{k\to\omega}M_{n_{k,m}})$ define
$d_\phi((a_m)_m,(b_m)_m)=\lim_{m\to\phi}\rho_\omega(a_m-b_m)$.
Then:
\[\Pi_{m\to\phi}(\Pi_{k\to\omega}M_{n_{m,k}}/Ker\rho_\omega)/d_\phi
\simeq\Pi_{(m,k)\to\phi\otimes\omega}M_{n_{m,k}}/Ker\rho_{\phi\otimes\omega}.\]
\end{cor}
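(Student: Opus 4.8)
The plan is to produce a single natural algebra homomorphism out of the common Cartesian product that underlies both sides, and then to read off both the isomorphism and the isometry directly from Proposition \ref{twolimits}. Both the left-hand iterated ultraproduct and the right-hand single ultraproduct are quotients of the same object $\Pi_{(m,k)}M_{n_{m,k}}(F)$, so I would define
\[
\Phi:\Pi_{(m,k)}M_{n_{m,k}}(F)\longrightarrow \Pi_{m\to\phi}\tonde{\Pi_{k\to\omega}M_{n_{m,k}}/Ker\rho_\omega}/d_\phi
\]
by first grouping the $k$-coordinate: for each fixed $m$ a double sequence $(a_{m,k})_{m,k}$ yields the sequence $(a_{m,k})_k$, whose $\omega$-class lies in $\Pi_{k\to\omega}M_{n_{m,k}}/Ker\rho_\omega$, and then letting $m$ vary and passing to the $d_\phi$-class produces an element of the iterated ultraproduct. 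Since the algebra operations in every ultraproduct here are computed coordinatewise, $\Phi$ is an algebra homomorphism. It is surjective: any element of the iterated ultraproduct is represented by a sequence $(b_m)_m$ with $b_m\in\Pi_{k\to\omega}M_{n_{m,k}}/Ker\rho_\omega$, and choosing for each $m$ a representative $(a_{m,k})_k$ of $b_m$ assembles a preimage $(a_{m,k})_{m,k}$.

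The crux is the identification of $Ker\,\Phi$. A double sequence lies in $Ker\,\Phi$ exactly when the image has vanishing $d_\phi$-distance to zero, that is, when $\lim_{m\to\phi}\rho_\omega\tonde{[(a_{m,k})_k]_\omega}=0$. By the definition of $\rho_\omega$ one has $\rho_\omega\tonde{[(a_{m,k})_k]_\omega}=\lim_{k\to\omega}\rho(a_{m,k})$, so this condition reads
\[
\lim_{m\to\phi}\tonde{\lim_{k\to\omega}\rho(a_{m,k})}=0.
\]
The sequence $\rho(a_{m,k})$ is bounded, taking values in $[0,1]$, so Proposition \ref{twolimits} applies and gives $\lim_{m\to\phi}\lim_{k\to\omega}\rho(a_{m,k})=\lim_{(m,k)\to\phi\otimes\omega}\rho(a_{m,k})=\rho_{\phi\otimes\omega}\tonde{(a_{m,k})_{m,k}}$. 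Hence $Ker\,\Phi=Ker\rho_{\phi\otimes\omega}$, and the first isomorphism theorem produces the desired algebra isomorphism $\Pi_{(m,k)\to\phi\otimes\omega}M_{n_{m,k}}/Ker\rho_{\phi\otimes\omega}\simeq\Pi_{m\to\phi}(\Pi_{k\to\omega}M_{n_{m,k}}/Ker\rho_\omega)/d_\phi$.

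Finally I would verify that this isomorphism is isometric, so that it is an isomorphism of metric algebras and not merely of the underlying algebras. Running the same computation on a difference $(a_{m,k}-b_{m,k})_{m,k}$ rather than on a single sequence shows that the $d_\phi$-distance between two images equals $\lim_{m\to\phi}\lim_{k\to\omega}\rho(a_{m,k}-b_{m,k})$, which by Proposition \ref{twolimits} is $\lim_{(m,k)\to\phi\otimes\omega}\rho(a_{m,k}-b_{m,k})$, precisely the value of $\rho_{\phi\otimes\omega}$ on the difference, i.e. the metric on the single ultraproduct. The only nontrivial ingredient is thus the interchange of iterated limits supplied by Proposition \ref{twolimits}; everything else is the routine check that $\Phi$ respects the coordinatewise algebra structure together with the bookkeeping of lifting representatives.
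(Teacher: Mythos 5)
Your proof is correct and follows exactly the route the paper intends: the paper states this corollary without proof as an immediate consequence of Proposition \ref{twolimits}, and your argument is precisely the elaboration of that implication, identifying both quotients of the common Cartesian product $\Pi_{(m,k)}M_{n_{m,k}}$ by computing the kernel via the interchange of iterated limits. Nothing is missing; the kernel identification and the isometry check are the only nontrivial points and you handle both.
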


\section{Definitions of linear soficity}\label{sec:def}

We are now defining the main concepts of our paper.

\begin{de}\label{def:lsofic}
A countable group $G$ is \emph{linear sofic} if there exist an
injective morphism $\Theta:G\to\Pi_{k\to\omega}
GL_{n_k}(\cz)/d_\omega$.

Such a morphism is called a \emph{linear sofic representation} of
$G$.
\end{de}

\begin{de}\label{deflinearsofic}
A countably generated algebra $A$ over a field $F$ is \emph{linear
sofic} if there exist an injective morphism $\Theta:A\to\Pi_{k\to\omega}
M_{n_k}(F)/Ker\rho_\omega$. Moreover, if $A$ is a unital algebra
we require that this morphism is unital.

Such a morphism is called a \emph{linear sofic representation} of
$A$.
\end{de}

\begin{ob}\label{vectorspaces}
An element of $M_n(\cz)$ is a linear transformation of the vector
space $\cz^n$. As $\cz$ is a vector space of dimension $2$ over
$\rz$, we can view this element as a transformation of the space
$\rz^{2n}$ or as a matrix in $M_{2n}(\rz)$. Its normalized rank
remains the same. As a consequence, a morphism $\Theta:G\to
\Pi_{k\to\omega} GL_{n_k}(\cz)/Ker\rho_\omega$ induces a morphism
$\Theta':G\to\Pi_{k\to\omega} GL_{2n_k}(\rz)/Ker\rho_\omega$. The
value of $\rho_\omega$ is preserved by this transformation. It
follows that we can work with $\rz$ instead of $\cz$ in the
definition of linear sofic group. We can further reduce
our considerations to the field of rationals (or equivalently to any finite dimensional
extension of the rationals), see Section~\ref{sec:q}. Alternatively, we construct linear sofic representations of groups over
a sequence of finite fields, see Section \ref{sec:wsofic}.
\end{ob}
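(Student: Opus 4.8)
The plan is to prove separately the two reduction claims hidden in this observation: the (routine) passage from $\cz$ to $\rz$, and the (genuinely delicate) passage from $\rz$ to $\qz$, which I would relegate to the semialgebraic machinery of Section~\ref{sec:q}. The point of the first is that a complex matrix is a real matrix of twice the size with the \emph{same} normalized rank, so a representation transports verbatim; the point of the second is that rank is an algebraic, not a continuous, datum, so naive rational approximation is unavailable.

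For the reduction to $\rz$ I would make the realification explicit. Writing a complex matrix as $a=b+ic$ with $b,c\in M_n(\rz)$ and using the $\rz$-basis $e_1,\dots,e_n,ie_1,\dots,ie_n$ of $\cz^n\cong\rz^{2n}$, the complex-linear action becomes the real block matrix $\bigl(\begin{smallmatrix} b & -c\\ c & b\end{smallmatrix}\bigr)$, and $\iota(a):=\bigl(\begin{smallmatrix} b & -c\\ c & b\end{smallmatrix}\bigr)$ defines a unital injective ring homomorphism $\iota\colon M_n(\cz)\to M_{2n}(\rz)$ carrying $GL_n(\cz)$ into $GL_{2n}(\rz)$. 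The one computation I actually need is $rk_\rz(\iota(a))=2\,rk_\cz(a)$: complexifying the real operator $\iota(a)$ gives $a\oplus\bar a$, whose complex rank is $2\,rk_\cz(a)$, and the complex rank of a real matrix equals its real rank. Hence $\rho(\iota(a))=\frac1{2n}rk_\rz(\iota(a))=\frac1n rk_\cz(a)=\rho(a)$, so $\iota$ is rank-isometric. Applied coordinatewise, the maps $\iota$ descend to an isometric, hence injective, homomorphism $\Pi_{k\to\omega}GL_{n_k}(\cz)/d_\omega\to\Pi_{k\to\omega}GL_{2n_k}(\rz)/d_\omega$ (isometric because $d_\omega$ is computed from $\rho_\omega$ and $\rho_\omega$ is preserved in each coordinate, so $Ker\rho_\omega$-classes match). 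Composing with $\Theta$ yields the asserted injective $\Theta'$; the reverse direction is trivial since $GL_n(\rz)\subseteq GL_n(\cz)$ and real and complex ranks of a real matrix coincide.

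The reduction from $\rz$ to $\qz$ is the main obstacle, and I do not expect an elementary argument. Entrywise rational approximation of a real representation fails outright: rank is only lower semicontinuous, so a nearby rational matrix need not reproduce the exact rank values on which $d_\omega$ and the injectivity (separation) condition $\rho(\Theta(g)-I)\geq c>0$ depend, nor preserve the relations $\Theta(g)\Theta(h)=\Theta(gh)$. Instead I would encode a single finite $\vp$-approximate representation --- finitely many matrices subject to prescribed rank equalities and to the separation inequalities --- as a first-order formula over the ordered field $\rz$, using that the conditions $rk(\cdot)\leq r$ and $rk(\cdot)\geq r$ are semialgebraic (vanishing, respectively non-vanishing, of minors). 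Feasibility over $\rz$ should then transfer, via the Positivstellensatz together with the slack afforded by the $\vp$-approximation, to feasibility over $\qz$ (or a finite-dimensional extension), and a diagonal argument over the finite subsets of $G$ with $\vp\to0$ would assemble these into a representation over $\qz$. I would carry out this transfer in full in Section~\ref{sec:q}.
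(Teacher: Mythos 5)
Your proposal is correct and follows essentially the same route as the paper: the Observation rests precisely on the fact that realification doubles both the rank and the matrix size, so the normalized rank is unchanged and the representation transports coordinatewise to the ultraproduct, while the genuinely nontrivial reductions to $\qz$ (via the Positivstellensatz) and to finite fields are deferred to Sections~\ref{sec:q} and~\ref{sec:wsofic}, exactly as the paper does. Your explicit verification that $rk_\rz(\iota(a))=2\,rk_\cz(a)$ via the complexification $a\oplus\bar a$ is a correct and welcome filling-in of the detail the paper leaves implicit.
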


Let $G$ be a linear sofic group and $\Theta:G\to\Pi_{k\to\omega}GL_{n_k}/d_\omega$ be an injective morphism. We define a length
function $\delta:G\to[0,1]$ by $\delta(g)=d_\omega(1,\Theta(g))$.
Then $\delta$ is constant on conjugacy classes of $G$ and
$\delta(g)=0$ iff $g=e$. The following proposition is
straightforward. It gives a more algebraic definition of linear soficity.
We will provide a stronger version we alluded to in the introduction, see Section \ref{sec:rampl}.

\begin{p}\label{algebraic definition}
A group $G$ is linear sofic if and only if the following holds: there exists $\delta:G\to[0,1]$
such that $\delta(g)=0$ if and only if $g=e$ and for any finite
subset $E\subset G$ and  for any $\vp>0,$ there exist $n\in\nz$ and a function
$\phi:E\to GL_n(\cz)$ such that:
\begin{enumerate}
\item $\forall g,h,gh\in E$ we have
$\rho(\phi(g)\phi(h)-\phi(gh))<\vp;$ \item $\forall g\in E$ we
have $\rho(1-\phi(g))>\delta(g)-\vp.$
\end{enumerate}
\end{p}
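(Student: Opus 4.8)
The plan is to establish the equivalence between the ultraproduct definition (Definition~\ref{def:lsofic}) and the approximate algebraic conditions (1)--(2) by unwinding the construction of the metric ultraproduct $\Pi_{k\to\omega}GL_{n_k}(\cz)/d_\omega$. The length function $\delta$ is already defined in the paragraph preceding the statement by $\delta(g)=d_\omega(1,\Theta(g))$, so the two directions amount to showing that the existence of an injective morphism $\Theta$ into the ultraproduct is captured exactly by the existence of local approximations $\phi:E\to GL_n(\cz)$ satisfying the stated inequalities, together with the separation property $\delta(g)=0\iff g=e$.

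\emph{From the ultraproduct to the approximations.} First I would assume $G$ is linear sofic and fix $\Theta$. For each $g\in G$, choose a representative $(\Theta_k(g))_k\in\Pi GL_{n_k}(\cz)$ of $\Theta(g)$ in the Cartesian product. The fact that $\Theta$ is a morphism means $\Theta(gh)=\Theta(g)\Theta(h)$ in the quotient, which by Definition~\ref{rankultraproduct} translates to $\lim_{k\to\omega}\rho\bigl(\Theta_k(g)\Theta_k(h)-\Theta_k(gh)\bigr)=0$ for all $g,h$. Injectivity gives $\delta(g)=\lim_{k\to\omega}\rho\bigl(1-\Theta_k(g)\bigr)>0$ precisely when $g\neq e$, which yields $\delta(g)=0\iff g=e$. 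Now given a finite set $E$ and $\vp>0$, the set of indices $k$ for which both $\rho(\Theta_k(g)\Theta_k(h)-\Theta_k(gh))<\vp$ holds for all the finitely many relations $g,h,gh\in E$, and $\rho(1-\Theta_k(g))>\delta(g)-\vp$ holds for all $g\in E$, lies in $\omega$ (it is a finite intersection of $\omega$-large sets, hence $\omega$-large, hence nonempty). Picking any such $k$ and setting $n=n_k$, $\phi=\Theta_k|_E$ produces the required function.

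\emph{From the approximations to the ultraproduct.} For the converse I would enumerate $G=\{g_1,g_2,\dots\}$ and take an increasing exhaustion $E_1\subset E_2\subset\cdots$ by finite symmetric sets with $\bigcup_m E_m=G$, applying hypothesis~(1)--(2) with $E=E_m$ and $\vp=1/m$ to obtain dimensions $n_m$ and functions $\phi_m:E_m\to GL_{n_m}(\cz)$. Extending each $\phi_m$ arbitrarily to all of $G$ (say sending undefined elements to the identity), I define $\Theta(g):=(\phi_m(g))_m$ as an element of $\Pi_{m\to\omega}GL_{n_m}(\cz)/d_\omega$. Condition~(1) with $\vp=1/m\to 0$ forces $\lim_{m\to\omega}\rho(\phi_m(g)\phi_m(h)-\phi_m(gh))=0$, so $\Theta$ is a morphism; condition~(2) gives $d_\omega(1,\Theta(g))\geq\delta(g)>0$ for $g\neq e$, so $\Theta$ is injective.

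\emph{Main obstacle.} The directions are genuinely routine once the bookkeeping is set up, so the only delicate point is the well-definedness of $\Theta$ in the converse: I must check that $\Theta(g)$ really lands in the group $\Pi_{m\to\omega}GL_{m}(\cz)/d_\omega$ rather than merely in the matrix ultraproduct, i.e.\ that the coordinates are invertible and that the morphism property survives. Invertibility is immediate since each $\phi_m$ takes values in $GL_{n_m}(\cz)$, and Proposition~\ref{invertible in ultraproduct} guarantees that the group ultraproduct $\Pi_{m\to\omega}GL_{n_m}(\cz)/d_\omega$ is exactly the unit group of the matrix ultraproduct, so no further verification of invertibility in the limit is needed. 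The one subtlety worth spelling out is that the separation condition $\delta(g)=0\iff g=e$ must be imposed as a hypothesis in the statement (the approximations alone only give the lower bound~(2)); this is why $\delta$ appears as given data rather than being reconstructed, and it is precisely this that makes $\Theta$ injective.
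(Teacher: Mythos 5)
Your argument is correct and is precisely the routine ultrafilter bookkeeping the paper has in mind when it declares this proposition straightforward and omits the proof. The only detail worth adding is that in the converse direction the dimensions $n_m$ supplied by the hypothesis need not tend to infinity, as Definition~\ref{universallinearsofic} requires of the target ultraproduct; this is repaired by replacing $\phi_m(g)$ with $\phi_m(g)\otimes Id_{N_m}$ for suitable $N_m$, which leaves every normalized rank unchanged by Proposition~\ref{prop of rk}(5).
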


Using this equivalent characterization we see that the definition of
linear sofic group does not depend on the particular choice of the
ultrafilter nor does it depend on the sequence $(n_k)_k$ as long
as $\lim_{k\to \infty}n_k=\infty$.

It is implicit in \cite{El-Sz1} that for a sofic group $G$ and any
field $F$ the group algebra $F(G)$ satisfies our definition of
linear sofic algebra (see also \cite{Oz}). This was obtained as an
intermediate result in the proof of Kaplansky's direct finiteness
conjecture for sofic groups. We present an alternative proof
of this fact in Section \ref{sec:lin}, as a consequence of our results about linear
soficity. The following result shows that linear soficity is a priori weaker than soficity.
Observe that the converse is open.

\begin{p}\label{sofic implies lsofic}
Sofic groups are linear sofic.
\end{p}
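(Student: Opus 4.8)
The plan is to take a sofic representation, which lives in an ultraproduct of symmetric groups with the Hamming metric, and convert each permutation into a permutation matrix, thereby landing inside an ultraproduct of $GL_{n_k}(\cz)$ with the rank metric. First I would recall that a permutation $p\in S_n$ yields a permutation matrix $P\in GL_n(\cz)$, and that this assignment is multiplicative: $PQ$ is the permutation matrix of the product $pq$. Thus the map $S_n\to GL_n(\cz)$ sending $p\mapsto P$ is an injective group homomorphism for each fixed $n$.

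The crucial computation is to relate the two metrics. For permutations $p,q\in S_n$ with corresponding permutation matrices $P,Q$, I would show that the rank distance $d_{rk}(P,Q)=\rho(P-Q)$ is comparable to the Hamming distance $d_{Hamm}(p,q)$. Concretely, by invariance it suffices to compare $\rho(I-P)$ with $d_{Hamm}(e,p)$, the proportion of points moved by $p$. Here the key point is that $I-P$ vanishes exactly on the coordinate vectors $e_i$ with $p(i)=i$, so its rank is controlled by the number of moved points: one gets the two-sided estimate
\[
\tfrac12\, d_{Hamm}(e,p)\leqslant \rho(I-P)\leqslant d_{Hamm}(e,p),
\]
where the lower bound comes from analyzing the restriction of $P$ to the span of the moved coordinates (a permutation cycle of length $\ell$ contributes a block $I_\ell-C_\ell$ of rank $\ell-1$, giving the factor $1/2$ in the worst case of transpositions). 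These inequalities, being exact for each $n$, pass to the ultralimit and show that the induced map on ultraproducts sends $d_{Hamm}$-convergent sequences to $d_\omega$-convergent sequences and, conversely, preserves nontriviality.

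With this comparison in hand, I would argue that a sofic representation $\Theta\colon G\hookrightarrow\Pi_{k\to\omega}(S_{n_k},d_{Hamm})$ induces a well-defined homomorphism $G\to\Pi_{k\to\omega}GL_{n_k}(\cz)/d_\omega$ by applying the permutation-matrix functor coordinatewise. The upper bound guarantees that elements of the Hamming-kernel (the null sequences) map into the rank-null sequences $\nr_\omega$ from Definition~\ref{universallinearsofic}, so the map descends to the quotient; the lower bound guarantees injectivity, since if $\Theta(g)\neq e$ then $d_{Hamm}(e,\Theta(g))>0$ forces $\rho_\omega(I-\Theta(g))>0$. This produces a linear sofic representation, as required by Definition~\ref{def:lsofic}.

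The main obstacle, though a mild one, is the lower bound $\rho(I-P)\geqslant\tfrac12 d_{Hamm}(e,p)$: one must verify that a permutation moving $m$ points, decomposed into disjoint nontrivial cycles of lengths $\ell_1,\dots,\ell_r$ with $\sum \ell_j=m$, gives $rk(I-P)=\sum(\ell_j-1)=m-r\geqslant m/2$, the inequality being tight precisely when every cycle is a transposition. This is an elementary but necessary linear-algebra verification that the rank of $I_\ell-C_\ell$ for a single $\ell$-cycle equals $\ell-1$, which follows since $C_\ell$ has eigenvalue $1$ with multiplicity exactly one. Once this block computation is done, the passage to the ultraproduct is routine and the injectivity and well-definedness follow formally from the uniform two-sided bound.
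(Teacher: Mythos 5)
Your proposal is correct and follows essentially the same route as the paper: both convert permutations to permutation matrices and establish the two-sided comparison $\frac12 d_{Hamm}(e,p)\leqslant\rho(I-P)\leqslant d_{Hamm}(e,p)$ (the paper phrases this via $fix(p)\leqslant cyc(p)$ and $cyc(p)\leqslant fix(p)+(n-fix(p))/2$, which is exactly your count $rk(I-P)=m-r$ with $r\leqslant m/2$). The only cosmetic difference is that you verify the block computation $rk(I_\ell-C_\ell)=\ell-1$ directly, whereas the paper cites it from Lupini.
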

\begin{proof}Let $p\in S_n$ and let $A_p$ be the corresponding permutation
matrix. Denote by $fix(p)$ the number of fixed points of $p$ and
by $cyc(p)$ the number of cycles (including fixed points) of $p$.
Clearly, $fix(p)\leqslant cyc(p)$. By definition,
$d_{Hamm}(Id,p)=1-fix(p)/n$ and it is easy to check that
$\rho(Id-A_p)=1-cyc(p)/n$ (see \cite{Lu}, Lemma 13). From this we
deduce that $\rho(Id-A_p)\leqslant d_{Hamm}(Id,p)$.

Let $G$ be a sofic group and $\Phi:G\to\Pi_{k\to\omega}
(S_{n_k},d_{Hamm})$ the corresponding injective morphism. The group
$S_{n_k}$ is isomorphic to the subgroup of permutation matrices of
$GL_{n_k}(\cz)$. Due to the above inequality on the normalized rank,
the morphism $\Phi$ induces a group morphism $\Theta:G\to
\Pi_{k\to\omega} GL_{n_k}(\cz)/d_\omega$. We show that this
morphism is injective.

Coming back to $p$ and $A_p$, it is easy to see that
$cyc(p)\leqslant fix(p)+(n-fix(p))/2$. This can be rewritten as
$1-fix(p)/n\leqslant 2(1-cyc(p)/n)$. Thus,
\[d_{Hamm}(Id,p)\leqslant 2\rho(Id-A_p).\]
As a consequence, we deduce the injectivity of $\Theta$.
\end{proof}

\section{Rank amplification}\label{sec:rampl}

A classical theorem of Elek and Szabo states that if $G$ is a sofic
group then there exists a group morphism
$\Theta\colon G\to\Pi_{k\to\omega}(S_{n_k},d_{Hamm})$ such that the
distance between $\Theta(g_1)$ and $\Theta(g_2)$ is 1 in the limit
for each $g_1\neq g_2$. This fact is required to
prove various results  including some permanence properties like  a \emph{direct limit of sofic groups is again
sofic}. We shall obtain a similar general fact for linear sofic groups. That is, in
Proposition~\ref{algebraic definition} we make a function $\delta$
constant on $G\setminus\{e\}$:  $\delta$ is independent of the choice of the group element $g\in G\setminus\{e\}$.
The proof of Elek and Szabo employs
a fundamental tool called \emph{amplification}. In matrix language, this
tool relies on tensor products together with the formula
$Tr(a\otimes b)=Tr(a)Tr(b),$ where $Tr$ denotes  the trace of a
matrix. Unfortunately, we do not have a similar formula for the rank metric.
Thus, our proof is technically much more involved.

\subsection{Preliminaries}

In this section, $A$ is always an element of $GL_n(\cz)$. For
$\lambda\in\cz$ define $M_\lambda(A)$ to be $1/n$ multiplied with
the algebraic multiplicity of the eigenvalue $\lambda$ (this is $0$ whenever $\lambda$ is not an eigenvalue of $A$). Then $M_0(A)=0$
and $\sum_{\lambda\in\cz^*}M_\lambda(A)=1$. Observe that:
\[\rho(A-Id)\geqslant 1-M_1(A).\]

\begin{lemma}
If $(\lambda_i)_{i=1,\ldots,n}$ are the eigenvalues of $A$ written
with the algebraic multiplicity, then
$(\lambda_i\lambda_j)_{i,j=1,\ldots,n}$ are the eigenvalues of
$A\otimes A$ written with algebraic multiplicity.
\end{lemma}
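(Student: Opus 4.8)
The plan is to prove the lemma using the structure theory of $A$ as a linear operator on $\cz^n$. The statement is about eigenvalues \emph{with algebraic multiplicity}, so I want to avoid any delicate argument about geometric multiplicities or diagonalizability. The cleanest route is to pass to the Jordan canonical form, or even better, to observe that eigenvalues-with-algebraic-multiplicity are exactly the multiset of roots of the characteristic polynomial, and to understand how that polynomial behaves under the tensor (Kronecker) product.

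First I would write $A = P J P^{-1}$ where $J$ is the Jordan form of $A$, upper triangular with the eigenvalues $\lambda_1,\ldots,\lambda_n$ on the diagonal. The key computation is then $A \otimes A = (P \otimes P)(J \otimes J)(P^{-1} \otimes P^{-1})$, using the mixed-product property of the Kronecker product $(PX P^{-1}) \otimes (PYP^{-1}) = (P\otimes P)(X\otimes Y)(P^{-1}\otimes P^{-1})$ together with the fact that $P\otimes P$ is invertible with inverse $P^{-1}\otimes P^{-1}$. Hence $A\otimes A$ is similar to $J\otimes J$, and similar matrices have the same characteristic polynomial and therefore the same eigenvalues with algebraic multiplicity. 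So it suffices to read off the diagonal of $J\otimes J$.

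Next I would note that the Kronecker product of two upper-triangular matrices is again upper triangular, and that its diagonal entries are precisely the products $J_{ii} J_{jj} = \lambda_i \lambda_j$ for $i,j = 1,\ldots,n$. Since the characteristic polynomial of an (upper) triangular matrix is $\prod (x - \text{diagonal entry})$, the eigenvalues of $J\otimes J$ counted with algebraic multiplicity are exactly the multiset $\{\lambda_i\lambda_j : i,j=1,\ldots,n\}$. Combining this with the similarity $A\otimes A \sim J\otimes J$ gives the claim. As an alternative formulation avoiding explicit Jordan forms, one can invoke Schur triangularization (valid over $\cz$), which directly provides a unitary $P$ conjugating $A$ to an upper-triangular matrix; the same Kronecker-product argument then applies verbatim.

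I do not expect any serious obstacle here: the result is essentially the well-known fact that the spectrum of $A\otimes B$ is the product of the spectra, specialized to $B=A$, and the only point requiring care is the bookkeeping of \emph{algebraic} multiplicities rather than the mere set of eigenvalues. That bookkeeping is handled automatically once everything is phrased via the characteristic polynomial of a triangular matrix, so the single idea doing the work is \emph{triangularize, then use the mixed-product property of $\otimes$}. The mild subtlety worth stating explicitly is that upper-triangularity is preserved under $\otimes$ with the expected diagonal, which one verifies directly from the block definition of the Kronecker product.
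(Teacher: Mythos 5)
Your proposal is correct and follows essentially the same route as the paper: conjugate $A$ to an upper triangular (Jordan) form, observe that the Kronecker product of upper triangular matrices is upper triangular with diagonal entries $\lambda_i\lambda_j$, and read off the characteristic polynomial. You merely spell out the similarity step $A\otimes A=(P\otimes P)(J\otimes J)(P\otimes P)^{-1}$ that the paper leaves implicit.
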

\begin{proof}
By the Jordan decomposition, we can write $A$ as an upper
triangular matrix with the values $(\lambda_i)_{i=1,\ldots,n}$ on
the diagonal. Then $A\otimes A$ is also an upper triangular matrix
with the values $(\lambda_i\lambda_j)_{i,j=1,\ldots,n}$ on the
diagonal. This implies that
$(\lambda_i\lambda_j)_{i,j=1,\ldots,n}$ are the roots of the
characteristic polynomial of $A\otimes A$. These roots are the
eigenvalues with algebraic multiplicity.
\end{proof}

\begin{lemma}\label{twosquaresineq}
If $A\in GL_n(\cz),$ then $M_1(A\otimes A)\leqslant
M_1(A)^2+(1-M_1(A))^2$.
\end{lemma}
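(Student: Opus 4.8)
The plan is to relate $M_1(A\otimes A)$ directly to the eigenvalue data of $A$ and then to estimate it. By the preceding lemma, the eigenvalues of $A\otimes A$ (with algebraic multiplicity) are exactly the products $\lambda_i\lambda_j$ where $(\lambda_i)_{i=1,\ldots,n}$ are the eigenvalues of $A$. Hence the algebraic multiplicity of the eigenvalue $1$ for $A\otimes A$ is the number of pairs $(i,j)$ with $\lambda_i\lambda_j=1$, and dividing by $n^2$ gives
\[
M_1(A\otimes A)=\frac{1}{n^2}\left\vert\{(i,j):\lambda_i\lambda_j=1\}\right\vert.
\]

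First I would split the set of such pairs according to whether or not $\lambda_i=1$. If both $\lambda_i=1$ and $\lambda_j=1$, then certainly $\lambda_i\lambda_j=1$, and the number of such pairs is $(nM_1(A))^2$, contributing exactly $M_1(A)^2$ to $M_1(A\otimes A)$. The remaining contribution comes from pairs $(i,j)$ with $\lambda_i\lambda_j=1$ but not both eigenvalues equal to $1$; I want to bound the number of these by $(n(1-M_1(A)))^2$, which after normalization gives the term $(1-M_1(A))^2$.

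For this second estimate, the key observation is that for any fixed index $i$, the condition $\lambda_i\lambda_j=1$ forces $\lambda_j=\lambda_i^{-1}$, so the values of $j$ that work all share a single eigenvalue $\lambda_i^{-1}$. I would therefore reorganize the count over the distinct eigenvalues: if a value $\mu$ occurs with multiplicity $m_\mu$, the pairs $(i,j)$ with $\lambda_i=\mu$ and $\lambda_j=\mu^{-1}$ number $m_\mu m_{\mu^{-1}}$. The pairs where not both eigenvalues equal $1$ are precisely those where at least one of the two indices has an eigenvalue different from $1$; counting these, each such pair lies among the $n(1-M_1(A))$ indices whose eigenvalue is $\neq 1$ in at least one coordinate, and a Cauchy–Schwarz or direct combinatorial argument shows their total is at most $(n(1-M_1(A)))^2$.

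The main obstacle I anticipate is making the second bound airtight: the pairs with $\lambda_i\lambda_j=1$ and not-both-equal-to-$1$ must be shown to inject into the set of pairs drawn from the $n(1-M_1(A))$ "non-one" indices. The cleanest way is to note that such a pair cannot have both coordinates equal to $1$, so at least one coordinate is a non-one index; but a naive count over "at least one coordinate" gives too much, so I would instead argue that whenever $\lambda_i\lambda_j=1$ with $\lambda_i\neq 1$, we also have $\lambda_j=\lambda_i^{-1}\neq 1$, forcing \emph{both} coordinates to be non-one indices. This is the crucial point: the constraint $\lambda_i\lambda_j=1$ is symmetric enough that $\lambda_i=1\iff\lambda_j=1$, so the "mixed" pairs collapse and every pair outside the all-ones block has both entries among the $n(1-M_1(A))$ non-one indices. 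The count of such pairs is then trivially at most $(n(1-M_1(A)))^2$, and combining the two blocks yields $M_1(A\otimes A)\leqslant M_1(A)^2+(1-M_1(A))^2$.
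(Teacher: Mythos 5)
Your proposal is correct and follows essentially the same route as the paper: both rest on the observation that $\lambda_i\lambda_j=1$ forces $\lambda_i=1$ if and only if $\lambda_j=1$, so the pairs split into the all-ones block of size $(nM_1(A))^2$ and a block contained in the non-one indices of size at most $(n(1-M_1(A)))^2$. The intermediate detour through multiplicities $m_\mu m_{\mu^{-1}}$ and Cauchy--Schwarz is unnecessary, but you correctly identify and use the decisive symmetry at the end.
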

\begin{proof}
Let $(\lambda_i)_{i=1,\ldots,n}$ be the eigenvalues of $A$ written
with the algebraic multiplicity. Assume that for $i=1,\ldots,k$ we
have $\lambda_i=1$ and for $i=k+1,\ldots,n$ we have $\lambda_i\neq
1$. Then $M_1(A)=k/n$. If $\lambda_i\lambda_j=1$ then either
$i\leqslant k$ and $j\leqslant k$ or $i>k$ and $j>k$. This implies
that $M_1(A\otimes A)\leqslant
(k^2+(n-k)^2)/n^2=M_1(A)^2+(1-M_1(A))^2$.
\end{proof}

The following proposition is elementary.

\begin{p}
Define $f:[1/2,1]\to[1/2,1]$ by $f(x)=x^2+(1-x)^2$. Then $f$ is a
well-defined increasing bijection. If $x\in[1/2,1),$ then
$\lim_{m\to\infty}f^m(x)=1/2$.
\end{p}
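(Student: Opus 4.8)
The plan is to treat the three assertions in turn, writing $f(x)=2x^2-2x+1$ for convenience.

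First I would verify that $f$ is a well-defined, strictly increasing bijection of $[1/2,1]$. Differentiating gives $f'(x)=4x-2$, which is nonnegative on $[1/2,1]$ and strictly positive on $(1/2,1]$; since its only zero is the single point $1/2$, the function $f$ is strictly increasing on all of $[1/2,1]$. Together with the endpoint values $f(1/2)=1/2$ and $f(1)=1$, continuity and strict monotonicity force $f([1/2,1])=[1/2,1]$, with $f$ a continuous strictly increasing surjection, hence a bijection.

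Next, for the dynamics I would first locate the fixed points. Solving $f(x)=x$ amounts to $2x^2-3x+1=0$, and the factorization $f(x)-x=(2x-1)(x-1)$ shows the only fixed points in $[1/2,1]$ are $1/2$ and $1$. The same factorization is the crux of the convergence argument: for $x\in(1/2,1)$ the two factors have opposite signs, so $f(x)<x$, while strict monotonicity together with $f(1/2)=1/2$ gives $f(x)>1/2$. Thus each iterate of a point in $(1/2,1)$ again lies in $(1/2,1)$, and the orbit $x, f(x), f^2(x),\dots$ is strictly decreasing and bounded below by $1/2$. By the monotone convergence theorem it tends to some limit $L\in[1/2,1)$, and continuity of $f$ forces $f(L)=L$; as $L<1$, the only possibility is $L=1/2$. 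The remaining case $x=1/2$ is immediate since $1/2$ is fixed.

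The statement is genuinely elementary, so there is no serious obstacle; the only point demanding a little care is the convergence step, where one must check both that the orbit cannot escape the open interval $(1/2,1)$ (so that the decreasing-and-bounded argument applies) and that the limit is the fixed point $1/2$ rather than the other fixed point $1$. Both issues are settled at once by the sign analysis of $(2x-1)(x-1)$.
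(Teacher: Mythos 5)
Your proof is correct and complete: the monotonicity check via $f'(x)=4x-2$, the endpoint values, the factorization $f(x)-x=(2x-1)(x-1)$ to show the orbit is strictly decreasing and trapped in $(1/2,1)$, and the identification of the limit as the fixed point $1/2$ are all sound. The paper gives no proof at all for this proposition (it simply declares it elementary), and your argument is exactly the standard one that is being left to the reader.
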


\begin{lemma}
If there exists $\mu\in\cz$ such that $M_\mu(A\otimes A)>1/2,$ then
there exists $\lambda\in\cz$ such that $M_\lambda(A)>1/2$.
Moreover, both $\lambda$ and $\mu$ are unique and if $\lambda=1$
then $\mu=1$.
\end{lemma}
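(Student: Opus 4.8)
The plan is to reinterpret the quantities $M_\mu(A\otimes A)$ as a self-convolution of the eigenvalue-multiplicity data of $A$ over the multiplicative group $\cz^*$. Writing $p_c:=M_c(A)$ for $c\in\cz^*$, the preceding lemma on the eigenvalues of $A\otimes A$ shows that the normalized algebraic multiplicity of $\mu$ in $A\otimes A$ is obtained by summing over all factorizations $cd=\mu$, that is,
\[ M_\mu(A\otimes A)=\sum_{c\in\cz^*} p_c\, p_{\mu/c}, \]
where $p_{\mu/c}=0$ if $\mu/c$ is not an eigenvalue of $A$. Since $A\in GL_n(\cz)$ is invertible, $0$ is not an eigenvalue, so $\sum_{c} p_c=1$ and likewise $\sum_\mu M_\mu(A\otimes A)=1$.

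For existence, I would bound the convolution crudely: since $p_{\mu/c}\le\max_d p_d$ for every $c$,
\[ M_\mu(A\otimes A)=\sum_c p_c\,p_{\mu/c}\le\Big(\max_d p_d\Big)\sum_c p_c=\max_d p_d. \]
Hence $M_\mu(A\otimes A)>1/2$ forces $\max_d p_d>1/2$, and any $\lambda$ attaining this maximum satisfies $M_\lambda(A)>1/2$. Uniqueness of both $\lambda$ and $\mu$ is then immediate: two distinct values each of normalized multiplicity exceeding $1/2$ would make the total multiplicity exceed $1$, contradicting $\sum_c p_c=1$ (respectively $\sum_\mu M_\mu(A\otimes A)=1$).

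The delicate part, which I expect to be the main obstacle, is the implication $\lambda=1\Rightarrow\mu=1$. Set $a:=p_1=M_1(A)>1/2$. I would show that every $\mu\neq 1$ satisfies $M_\mu(A\otimes A)<1/2$, so that the unique $\mu$ with value above $1/2$ cannot be anything other than $1$. Fix $\mu\neq 1$ and split the convolution according to whether $c\in\{1,\mu\}$. The terms $c=1$ and $c=\mu$ each contribute $a\,p_\mu$. For every remaining $c$ one has $\mu/c\neq 1$, hence $p_{\mu/c}\le\max_{d\neq 1}p_d\le 1-a$; summing these terms gives at most $(1-a)(1-a-p_\mu)$. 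Therefore
\[ M_\mu(A\otimes A)\le 2a\,p_\mu+(1-a)(1-a-p_\mu). \]
The right-hand side is affine in $p_\mu$ with positive slope $3a-1$, so on $p_\mu\in[0,1-a]$ it is maximized at $p_\mu=1-a$, where it equals $2a(1-a)$. Finally $2a(1-a)=\tfrac12-2(a-\tfrac12)^2<\tfrac12$ because $a>1/2$, which yields $M_\mu(A\otimes A)<1/2$ and completes the argument.
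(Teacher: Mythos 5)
Your proof is correct, and at its core it takes the same route as the paper: both arguments decompose the normalized multiplicity $M_\mu(A\otimes A)$ as a sum over factorizations $\lambda_i\lambda_j=\mu$ of the eigenvalues of $A$, extract a $\lambda$ with $M_\lambda(A)>1/2$ by an averaging/pigeonhole bound, get uniqueness from the total multiplicity being $1$, and handle $\lambda=1\Rightarrow\mu=1$ by splitting the factorizations of $\mu\neq 1$ according to whether one factor equals $1$ (or $\mu$). The only real divergence is in how the final inequality is closed: writing $a=M_1(A)$, $k_2=nM_\mu(A)$, $k_3=n-na-k_2$, you bound the cross terms by $(1-a)(1-a-p_\mu)$, i.e.\ $k_3(k_2+k_3)/n^2$, and then maximize an affine function of $p_\mu$ to reach the sharp strict bound $M_\mu(A\otimes A)\leqslant 2a(1-a)<1/2$; the paper uses the slightly tighter count $2k_1k_2+k_3^2$ and the chain $4k_1k_2+2k_3^2\leqslant(k_1+k_2+k_3)^2=n^2$ to get $M_\mu(A\otimes A)\leqslant 1/2$, contradicting the strict hypothesis. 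Your version is a clean repackaging (the convolution bound $\sum_c p_cp_{\mu/c}\leqslant\max_d p_d$ is a nice way to phrase the existence step), and every estimate checks out, including the constraint $p_\mu\in[0,1-a]$ and the identity $2a(1-a)=\tfrac12-2(a-\tfrac12)^2$.
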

\begin{proof}
Let $(\lambda_i)_{i=1,\ldots,n}$ be the eigenvalues of $A$ written
with the algebraic multiplicity. By hypothesis, there exists
$\mu\in\cz$ such that
$|\{(i,j)\colon\lambda_i\lambda_j=\mu\}|>\frac12n^2$. Let
$C_i=\{j\colon\lambda_i\lambda_j=\mu\}$. Then
$\sum_{i=1}^n|C_i|>\frac12n^2$. It follows that there exists $i_0$
such that $|C_{i_0}|>\frac12n$. If $\lambda=\lambda_{i_0}^{-1}\mu$
then $\lambda_j=\lambda$ for every $j\in C_{i_0}$. This means that
$M_\lambda(A)>1/2$. The uniqueness part of the proposition is
trivial.

Suppose now that $\lambda=1$ and assume that $\mu\neq 1$. Let
$k_1=M_1(A)\cdot n$ and $k_2=M_\mu(A)\cdot n$ (these are the algebraic
multiplicities of $1$ and $\mu$). Define $k_3=n-k_1-k_2$. By
hypothesis $k_1>n/2,$ hence $k_3<k_1$. It is easy to see that the
algebraic multiplicity of $\mu$ in $A\otimes A$ is less than
$2k_1k_2+k_3^2$. Then:
\[2n^2M_\mu(A\otimes A)\leqslant4k_1k_2+2k_3^2\leqslant(k_1+k_2)^2+
(k_1+k_2)k_3+k_3^2\leqslant(k_1+k_2+k_3)^2=n^2.\] It follows that
$M_\mu(A\otimes A)\leqslant 1/2$, giving a contradiction.
\end{proof}

\begin{de}
For $A\in GL_n(\cz)$ define $A_1:=A$ and $A_{m+1}:=A_m\otimes A_m$.
\end{de}

\begin{p}\label{technical result}
Suppose that $M_1(A)\in(1/2,1)$ and let $c$ be a constant such
that $c\in(M_1(A),1)$. Then $M_1(A_m)<f^{m-1}(c)$.
\end{p}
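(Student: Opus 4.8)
The plan is to prove the bound $M_1(A_m)<f^{m-1}(c)$ by induction on $m$, using the two preparatory lemmas already established. The base case $m=1$ reads $M_1(A_1)=M_1(A)<f^0(c)=c$, which holds by the hypothesis $c>M_1(A)$. For the inductive step I would assume $M_1(A_m)<f^{m-1}(c)$ and seek to deduce $M_1(A_{m+1})<f^m(c)$. Since $A_{m+1}=A_m\otimes A_m$ by definition, Lemma~\ref{twosquaresineq} applied to the matrix $A_m$ gives
\[
M_1(A_{m+1})=M_1(A_m\otimes A_m)\leqslant M_1(A_m)^2+(1-M_1(A_m))^2=f(M_1(A_m)).
\]

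The next step is to push the inductive hypothesis through $f$. Here I would invoke the elementary proposition that $f$ is an increasing bijection on $[1/2,1]$. To apply monotonicity of $f$ I must first check that $M_1(A_m)$ actually lies in the domain $[1/2,1]$ where $f$ is increasing, and likewise that $f^{m-1}(c)$ does; otherwise the implication $M_1(A_m)<f^{m-1}(c)\Rightarrow f(M_1(A_m))<f(f^{m-1}(c))$ is not justified. Granting this, monotonicity yields
\[
f(M_1(A_m))<f\bigl(f^{m-1}(c)\bigr)=f^m(c),
\]
and combining with the previous inequality closes the induction.

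The delicate point, and the one I expect to be the main obstacle, is controlling the interval in which the quantities live so that the increasing-bijection property of $f$ can legitimately be applied at every stage. One needs $M_1(A_m)\geqslant 1/2$ to stay inside $[1/2,1]$; this is where the earlier lemma on $A\otimes A$ is essential. Since $M_1(A)>1/2$, that lemma (in the form that if some eigenvalue multiplicity of $A\otimes A$ exceeds $1/2$ then the corresponding multiplicity of $A$ exceeds $1/2$, with $\lambda=1$ forcing $\mu=1$) guarantees that the eigenvalue $1$ remains the dominant one, so $M_1(A_{m})>1/2$ persists for all $m$. One also checks that $c\in(M_1(A),1)\subset(1/2,1)$ and that $f$ maps $[1/2,1)$ into itself with $f^{m-1}(c)\in[1/2,1)$, so both sides of the strict inequality remain in the domain of monotonicity throughout.

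Thus the structure of the argument is: reduce $A_{m+1}$ to a tensor square via the definition, bound $M_1$ of the tensor square by $f$ of $M_1$ using Lemma~\ref{twosquaresineq}, verify that the relevant values stay in $[1/2,1)$ by appeal to the dominance lemma so that $f$ is increasing there, and then iterate. The only genuine care required is the domain bookkeeping; the algebra itself is immediate once $f$'s monotonicity is available. I would present the proof as a short induction, stating the domain membership explicitly at each invocation of $f$.
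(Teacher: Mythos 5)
Your skeleton (induction, Lemma~\ref{twosquaresineq}, monotonicity of $f$ on $[1/2,1]$) matches the paper's, and you correctly isolate the delicate point --- but your resolution of that point is wrong. The claim that ``$M_1(A_m)>1/2$ persists for all $m$'' is false, and the dominance lemma cannot deliver it because it runs in the opposite direction: it says that dominance of an eigenvalue of $A\otimes A$ forces dominance of an eigenvalue of $A$, not that dominance of $A$ propagates to $A\otimes A$. Concretely, take $A$ diagonal with $51$ eigenvalues equal to $1$ and $49$ equal to $2$ (so $n=100$ and $M_1(A)=0.51>1/2$); then the eigenvalues of $A\otimes A$ are $1$ with multiplicity $51^2$, $2$ with multiplicity $2\cdot 51\cdot 49$, and $4$ with multiplicity $49^2$, so $M_1(A_2)=0.2601<1/2$. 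Your invariant already fails at the first tensor square, and once $M_1(A_m)<1/2$ the chain $M_1(A_{m+1})\leqslant f(M_1(A_m))<f(f^{m-1}(c))$ collapses, since $f$ is \emph{decreasing} on $[0,1/2]$ and $f(M_1(A_m))$ can then be close to $1$.

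The paper closes this gap with a case split inside the inductive step rather than a global invariant. If $M_1(A_{m+1})\leqslant 1/2$ there is nothing to prove, because $f^m(c)>1/2$ always. If instead $M_1(A_{m+1})>1/2$, one runs the dominance lemma \emph{downward}: $A_{m+1}=A_m\otimes A_m$ has a dominant eigenvalue, hence so does $A_m$, hence so does each $A_n$ down to $A_1=A$; since $M_1(A)>1/2$ and a dominant eigenvalue is unique, the dominant eigenvalue of $A$ is $1$, and the ``$\lambda=1$ forces $\mu=1$'' half of the lemma then propagates this back \emph{upward} to give $M_1(A_m)>1/2$ for that particular $m$. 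Only at this point is $M_1(A_m)$ known to lie in $[1/2,1]$, so that $f(M_1(A_m))<f(f^{m-1}(c))=f^m(c)$ and Lemma~\ref{twosquaresineq} finish the step. So the needed fact is established a posteriori, conditionally on $M_1(A_{m+1})>1/2$, via a reverse induction --- not as the unconditional persistence statement you assert.
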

\begin{proof}
We proceed by induction. The case $m=1$ follows by hypothesis.
Assume that $M_1(A_m)<f^{m-1}(c)$. Since $f^m(c)$ is always
strictly greater than $1/2$, if $M_1(A_{m+1})\leqslant 1/2$ we are
done. Assume that $M_1(A_{m+1})>1/2$. We shall prove that also
$M_1(A_m)>1/2$.

By the previous lemma and a reverse induction, for each
$n\leqslant m$ there exists $\lambda_n$ such that
$M_{\lambda_n}(A_n)>1/2$. Since $M_1(A)>1/2$ we get
$\lambda_1=1$. Applying the second part of the previous lemma, we
inductively get $\lambda_n=1$ for $n\leqslant m$. This proves that
$M_1(A_m)>1/2$.

The function $f$ is strictly increasing, therefore $f(M_1(A_m))<f^m(c)$.
By Lemma \ref{twosquaresineq}, we get $M_1(A_{m+1})\leqslant
f(M_1(A_m))$ and we are done.
\end{proof}

This proposition solves the case $M_1(A)<1$. We still
have to deal with the case $M_1(A)=1$, that is when all eigenvalues of
$A$ are $1$. In this case, the inequality $\rho(A-Id)\geqslant
1-M_1(A)$ cannot help. Thus, we have to investigate in detail the decomposition
of $A$ into Jordan blocks.

Let $J(\alpha,s)\in GL_s(\cz)$ be the Jordan block of size
$s\times s$ and having eigenvalue $\alpha$, that is, the diagonal is composed
only of values $\alpha$ and the entries directly above and to the
right of the diagonal are $1$. We use the following recent
description of the tensor product of Jordan blocks (surprisingly, the proof of this fact
is quite involved).

\begin{te}(\cite[Theorem 2]{Ma-Vl}, \cite[Theorem 2.0.1]{Ii-Iw})
For $s,t\in\nz$, $s\leqslant t$ and $\alpha,\beta\in\cz^*$ we
have:
\[J(\alpha,s)\otimes J(\beta,t)=\oplus_{i=1}^s
J(\alpha\beta,s+t+1-2i).\]
\end{te}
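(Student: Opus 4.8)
The plan is to reduce the multiplicative statement to a purely additive (nilpotent) one and then to invoke the classical Clebsch--Gordan decomposition. Write $A=J(\alpha,s)=\alpha I_s+N_s$ and $B=J(\beta,t)=\beta I_t+N_t$, where $N_s,N_t$ denote the regular nilpotent blocks (ones on the superdiagonal). By the eigenvalue lemma above, every eigenvalue of $A\otimes B$ equals $\alpha\beta\neq 0$, so $A\otimes B$ is similar to a direct sum of Jordan blocks all with eigenvalue $\alpha\beta$. The multiset of block sizes is therefore determined by the nilpotent operator $A\otimes B-\alpha\beta I$, equivalently by the ranks $rk\big((A\otimes B-\alpha\beta I)^k\big)$ for $k\ge 1$. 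Thus it suffices to identify this Jordan type with block sizes $s+t-1,s+t-3,\dots,t-s+1$, i.e.\ $s+t+1-2i$ for $i=1,\dots,s$ (whose sum is $st$, matching the dimension $st$).

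The key device is the formal logarithm. Since $\alpha^{-1}N_s$ is nilpotent, $\log A:=(\log\alpha)I+\log(I+\alpha^{-1}N_s)$ is a finite polynomial expression, and $\tilde N_s:=\log(I+\alpha^{-1}N_s)=\alpha^{-1}N_s-\tfrac12\alpha^{-2}N_s^2+\cdots$ is nilpotent; writing $\tilde N_s=N_s\,q(N_s)$ with $q(0)=\alpha^{-1}\neq 0$ and $q(N_s)$ invertible shows $rk(\tilde N_s^{\,k})=rk(N_s^{\,k})$ for all $k$, so $\tilde N_s$ is a single regular nilpotent block, similar to $N_s$. As $A\otimes I$ and $I\otimes B$ commute, $\log(A\otimes B)=\log A\otimes I+I\otimes\log B=\log(\alpha\beta)\,I+\big(\tilde N_s\otimes I+I\otimes\tilde N_t\big)$. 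Because $\exp$ and $\log$ convert a single nilpotent block into a single unipotent block of the same size and are polynomials in the operator, they preserve Jordan block sizes; hence the block sizes of $A\otimes B$ coincide with those of the nilpotent operator $X:=\tilde N_s\otimes I+I\otimes\tilde N_t$. Using $\tilde N_s=SN_sS^{-1}$ and $\tilde N_t=TN_tT^{-1}$, conjugation by $S\otimes T$ shows that $X$ is similar to $N_s\otimes I_t+I_s\otimes N_t$.

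It remains to compute the Jordan type of the Kronecker sum $N_s\otimes I_t+I_s\otimes N_t$ of two regular nilpotents. Here I would invoke $\mathfrak{sl}_2$-representation theory. On the irreducible module $V_m$ (highest weight $m$, dimension $m+1$) the raising operator $e$ acts as a single regular nilpotent, and on a tensor product $e$ acts as the Kronecker sum $e\otimes 1+1\otimes e$. Identifying $\cz^s$ with $V_{s-1}$ and $\cz^t$ with $V_{t-1}$, the conjugation argument above shows that $N_s\otimes I_t+I_s\otimes N_t$ is similar to the raising operator on $V_{s-1}\otimes V_{t-1}$. The Clebsch--Gordan decomposition $V_{s-1}\otimes V_{t-1}=\bigoplus_{i=1}^{s}V_{s+t-2i}$ then expresses this operator as a direct sum of regular nilpotents of sizes $\dim V_{s+t-2i}=s+t+1-2i$; transporting back through $\exp$ yields the asserted decomposition. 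Alternatively, one avoids representation theory by computing $rk\big((N_s\otimes I+I\otimes N_t)^k\big)$ directly from the binomial expansion $\sum_j\binom{k}{j}N_s^{\,j}\otimes N_t^{\,k-j}$ together with a lattice-point count.

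The main obstacle is this last additive computation: verifying that $N_s\otimes I+I\otimes N_t$ has exactly the block sizes $s+t+1-2i$. The representation-theoretic route makes it transparent but relies on the explicit Clebsch--Gordan rule, whereas the bare-hands rank computation is combinatorially delicate. A secondary technical point that must be handled with care is the claim that the formal logarithm intertwines the Kronecker product with the Kronecker sum and preserves Jordan block sizes; this is precisely what licenses replacing the multiplicative problem by the additive one. Although routine over $\cz$ (characteristic zero), it is exactly the step that fails to be elementary over arbitrary fields, which is presumably why the cited proofs are more involved.
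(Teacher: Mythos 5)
Your proposal is correct, but note that the paper does not prove this statement at all: it is imported verbatim from \cite{Ma-Vl} and \cite{Ii-Iw}, with the remark that the proof is ``quite involved.'' So you have supplied a proof where the authors supply only a citation. Your argument is sound over $\cz$, which is all the statement (and its use in Section~\ref{sec:rampl}) requires: the reduction of the multiplicative problem to the nilpotent one via the formal logarithm is legitimate because $\log(XY)=\log X+\log Y$ holds for commuting unipotents as a polynomial identity in nilpotent variables, and because $\log U=(U-I)r(U-I)$ with $r(0)=1$ forces $rk\bigl((\log U)^k\bigr)=rk\bigl((U-I)^k\bigr)$ for all $k$, so Jordan types are preserved; likewise $\tilde N_s=N_sq(N_s)$ with $q(0)=\alpha^{-1}\neq0$ makes $\tilde N_s$ regular nilpotent, and conjugating by $S\otimes T$ reduces everything to the Kronecker sum $N_s\otimes I_t+I_s\otimes N_t$. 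The final step is then exactly the Clebsch--Gordan rule for $\mathfrak{sl}_2$, since any two regular nilpotents of the same size are similar and the raising operator acts on a tensor product by $e\otimes 1+1\otimes e$ and as a single regular nilpotent on each irreducible summand; the resulting block sizes $s+t+1-2i$, $i=1,\dots,s$, sum to $st$ as required. The cited sources work harder precisely because they treat the problem by direct rank computations (and the answer genuinely changes over fields of positive characteristic, where your logarithm is unavailable); your characteristic-zero shortcut trades that generality for transparency, which is a perfectly good trade for the complex case needed here.
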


From now on, $A$ is a matrix in $GL_n(\cz)$ such that $M_1(A)=1$.
Denote by $J(A)$ the number of Jordan blocks in $A$ divided by
$n$. Then $\rho(A-Id)=1-J(A)$.

\begin{p}
If $M_1(A)=1$ then $J(A\otimes A)\leqslant J(A)$ and $J(A\otimes
A)\leqslant J(A)^2+(1-J(A))^2$.
\end{p}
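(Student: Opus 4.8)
The plan is to translate both inequalities into statements about the multiset of Jordan block sizes of $A$ and then treat them separately: the first by a pointwise bound, the second by a counting (``layer-cake'') rewriting that converts the sum over pairs of blocks into a sum of squares. First I would invoke the Jordan decomposition. Since $M_1(A)=1$, every eigenvalue of $A$ equals $1$, so up to conjugation $A=\oplus_{k=1}^{N}J(1,s_k)$ with $\sum_{k=1}^{N}s_k=n$, where $N=J(A)\cdot n$ is the number of Jordan blocks; conjugation changes neither the rank, the eigenvalues, nor the Jordan type, so I may argue with this normal form. All eigenvalues of $A\otimes A$ are then again $1$, so $M_1(A\otimes A)=1$ and $J(A\otimes A)$ is defined. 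Distributing the tensor product, $A\otimes A=\oplus_{k,l}J(1,s_k)\otimes J(1,s_l)$, and by the cited theorem on tensor products of Jordan blocks each summand $J(1,s_k)\otimes J(1,s_l)$ splits into exactly $\min(s_k,s_l)$ Jordan blocks. Hence
\[J(A\otimes A)=\frac{1}{n^2}\sum_{k,l=1}^{N}\min(s_k,s_l).\]

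For the inequality $J(A\otimes A)\leqslant J(A)$ I would use the pointwise bound $\min(s_k,s_l)\leqslant\frac12(s_k+s_l)$, so that $\sum_{k,l}\min(s_k,s_l)\leqslant\frac12\sum_{k,l}(s_k+s_l)=N\sum_k s_k=Nn$, and therefore $J(A\otimes A)\leqslant Nn/n^2=N/n=J(A)$.

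The second inequality is the crux. For each integer $t\geqslant 1$ let $a_t$ be the number of blocks of size at least $t$, that is $a_t=|\{k:s_k\geqslant t\}|$. Writing $\min(s_k,s_l)$ as the number of integers $t\geqslant 1$ with $s_k\geqslant t$ and $s_l\geqslant t$, and summing over all ordered pairs, I would obtain
\[\sum_{k,l=1}^{N}\min(s_k,s_l)=\sum_{t\geqslant 1}a_t^2.\]
Now $a_1=N$ since every block has size at least $1$, while $\sum_{t\geqslant 1}a_t=\sum_k s_k=n$, so $\sum_{t\geqslant 2}a_t=n-N$. Since the $a_t$ are non-negative, the cross terms in the square are non-negative, giving $\sum_{t\geqslant 2}a_t^2\leqslant(\sum_{t\geqslant 2}a_t)^2=(n-N)^2$. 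Adding the $t=1$ term yields $\sum_{t\geqslant 1}a_t^2\leqslant N^2+(n-N)^2$, and dividing by $n^2$ gives $J(A\otimes A)\leqslant(N/n)^2+(1-N/n)^2=J(A)^2+(1-J(A))^2$.

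I expect the only real obstacle to be recognizing the layer-cake rewriting $\sum_{k,l}\min(s_k,s_l)=\sum_{t}a_t^2$ with $a_1=N$; once this is in place, the bound follows from the elementary fact that $\sum b_i^2\leqslant(\sum b_i)^2$ for non-negative reals, applied to the tail $t\geqslant 2$.
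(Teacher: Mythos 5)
Your proof is correct. The setup coincides with the paper's: both pass to the Jordan normal form, invoke the cited theorem to see that $J(1,s)\otimes J(1,t)$ contributes $\min(s,t)$ blocks, and reduce everything to the combinatorial inequality $\sum_{k,l}\min(s_k,s_l)\leqslant N^2+(n-N)^2$ where $N$ is the number of blocks and $n=\sum_k s_k$. Where you diverge is in proving that inequality. The paper groups the blocks by size, setting $c_i$ equal to the number of blocks of size $i$, expands both sides as quadratic forms in the $c_i$, and compares coefficients term by term via $i\leqslant(i-1)^2+1\leqslant(i-1)(j-1)+1$ for $i<j$. You instead use the layer-cake identity $\sum_{k,l}\min(s_k,s_l)=\sum_{t\geqslant 1}a_t^2$ with $a_t$ the number of blocks of size at least $t$, note $a_1=N$ and $\sum_{t\geqslant 2}a_t=n-N$, and finish with the elementary bound $\sum_{t\geqslant 2}a_t^2\leqslant\bigl(\sum_{t\geqslant 2}a_t\bigr)^2$ for non-negative terms. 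Your route is arguably cleaner and more conceptual: it isolates the single identity that makes the inequality transparent and makes clear where equality can occur (when at most one $a_t$ with $t\geqslant 2$ is nonzero, i.e.\ up to the $t=1$ layer the sizes are concentrated), whereas the paper's coefficient comparison is more mechanical but requires no reindexing trick. Your handling of the first inequality via $\min(s_k,s_l)\leqslant\frac12(s_k+s_l)$ is a symmetrized version of the paper's $\min\{i,j\}\leqslant i$; both are immediate.
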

\begin{proof}
For $i\in\nz^*$ let $c_i$ be the number of Jordan blocks in $A$ of
size $i$. Then $\sum_iic_i=n$ and $\sum_ic_i=nJ(A)$. Also
$A=\oplus_iJ(1,i)\otimes Id_{c_i}$. Then:
\[A\otimes A=\oplus_{i,j}J(1,i)\otimes J(1,j)\otimes
Id_{c_ic_j}.\] According to the previous theorem the number of
Jordan blocks in the matrix $J(1,i)\otimes J(1,j)$ is
$\min\{i,j\}$, so $n^2J(A\otimes A)=\sum_{i,j}c_ic_j\min\{i,j\}$.
Then:
\[n^2J(A\otimes
A)=\sum_{i<j}c_ic_ji+\sum_{j<i}c_ic_jj+\sum_ic_i^2i=\sum_iic_i^2+2\sum_{i<j}ic_ic_j.\]
Note that $n^2[J(A)^2+(1-J(A))^2]=(nJ(A))^2+(n-nJ(A))^2$, so:
\begin{align*}
n^2(J(A)^2+(1-J(A))^2)=&(\sum_ic_i)^2+(\sum_i(i-1)c_i)^2=\sum_ic_i^2+2\sum_{i<j}c_ic_j+\\
+&\sum_i(i-1)^2c_i^2+2\sum_{i<j}(i-1)(j-1)c_ic_j\\
=&\sum_i[(i-1)^2+1]c_i^2+2\sum_{i<j}[(i-1)(j-1)+1]c_ic_j.
\end{align*}
For the second inequality we only need to see that for any
$i\in\nz$ and $i<j$ we have $i\leqslant (i-1)^2+1\leqslant
(i-1)(j-1)+1$. The first inequality is easy because:
\[n^2J(A)=(\sum_iic_i)(\sum_jc_j)=\sum_{i,j}ic_ic_j.\]
\end{proof}

\begin{p}\label{M1=1 technical result}
Let $c>1/2$ be a constant $c\in(J(A),1)$. Then
$J(A_m)<f^{m-1}(c)$.
\end{p}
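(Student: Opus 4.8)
The plan is to mirror exactly the inductive scheme used in Proposition~\ref{technical result}, but now driving the quantity $J(A)$ through the doubling $A_{m+1}=A_m\otimes A_m$ instead of $M_1(A)$. First I would record the base case: for $m=1$ the claim $J(A_1)=J(A)<c$ holds by the hypothesis $c\in(J(A),1)$. For the inductive step, I assume $J(A_m)<f^{m-1}(c)$ and wish to conclude $J(A_{m+1})<f^m(c)$. The crucial structural point is that $A$ has all eigenvalues equal to $1$ (the case $M_1(A)=1$), and this property is preserved under tensoring: by the Jordan-block tensor formula $J(\alpha,s)\otimes J(\beta,t)=\oplus_{i=1}^s J(\alpha\beta,s+t+1-2i)$ with $\alpha=\beta=1$, every $A_m$ again has all eigenvalues equal to $1$, so the preceding proposition applies to each $A_m$.

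The engine of the argument is the inequality $J(A\otimes A)\leqslant J(A)^2+(1-J(A))^2=f(J(A))$ from the proposition just proved, applied to $A_m$ in place of $A$. This gives $J(A_{m+1})\leqslant f(J(A_m))$. Since $J(A)<c$ with $c>1/2$, and $f$ restricted to $[1/2,1]$ is a strictly increasing bijection by the elementary proposition above, I want to feed $J(A_m)<f^{m-1}(c)$ into $f$ monotonically. The subtlety is that $f$ is only guaranteed increasing on $[1/2,1]$, whereas a priori $J(A_m)$ could lie in $[0,1/2)$. Here I would use the other inequality from the proposition, $J(A\otimes A)\leqslant J(A)$, to argue that the sequence $J(A_m)$ is non-increasing, hence once it drops to or below $1/2$ it stays below every $f^{m-1}(c)$ automatically, since $f^m(c)>1/2$ for all $m$ (the elementary proposition tells us $f$ maps $[1/2,1]$ into itself and $f^m(c)\to 1/2$ from above). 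Concretely: if at some stage $J(A_m)\leqslant 1/2$ then $J(A_{m+1})\leqslant J(A_m)\leqslant 1/2<f^m(c)$ and the bound is immediate.

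So the only case requiring the monotonicity of $f$ is $J(A_m)>1/2$, and there the induction hypothesis places $J(A_m)$ in the interval $(1/2,f^{m-1}(c))$ on which $f$ is increasing, yielding
\[
J(A_{m+1})\leqslant f(J(A_m))<f(f^{m-1}(c))=f^m(c),
\]
which closes the induction. This is the direct analogue of the last two sentences in the proof of Proposition~\ref{technical result}, with $J$ playing the role of $M_1$ and the $J$-version of the two-squares inequality replacing Lemma~\ref{twosquaresineq}.

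The main obstacle I anticipate is precisely the bookkeeping around the threshold $1/2$: one must ensure that the hypothesis $c>1/2$ is used to keep all iterates $f^{m-1}(c)$ strictly above $1/2$, and that the monotonicity of $f$ is only ever invoked on arguments known to lie in $[1/2,1]$. Unlike the $M_1$ case, where the previous lemma supplied a separate reverse-induction argument (via a dominant eigenvalue $\mu$ with $M_\mu>1/2$) to guarantee that a large $M_1(A_{m+1})$ forces a large $M_1(A_m)$, here the cleaner inequality $J(A\otimes A)\leqslant J(A)$ makes the sequence automatically monotone decreasing, so I expect no need for such a detour. The argument is therefore genuinely simpler than its eigenvalue-multiplicity counterpart, and the whole proof reduces to combining the two inequalities of the preceding proposition with the elementary dynamical facts about $f$.
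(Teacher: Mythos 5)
Your proof is correct and follows essentially the same inductive argument as the paper: the base case from the hypothesis, the case $J(A_m)\leqslant 1/2$ handled via $J(A\otimes A)\leqslant J(A)$ together with $f^m(c)>1/2$, and the case $J(A_m)>1/2$ handled by monotonicity of $f$ on $[1/2,1]$ combined with $J(A_{m+1})\leqslant f(J(A_m))$. Your observation that the reverse-induction detour of the $M_1$ case is unnecessary here is exactly why the paper's proof of this proposition is shorter than that of Proposition~\ref{technical result}.
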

\begin{proof}
For $m=1$, $A_m=A$ and $f^0(c)=c,$ hence the result follows by
hypothesis. Suppose now that $J(A_m)<f^{m-1}(c)$. If
$J(A_{m})\leqslant 1/2,$ then $J(A_{m+1})\leqslant 1/2$ and we are
done as $f^m(c)$ is always strictly greater than $1/2$.

Assume that $J(A_{m+1})>1/2$. The function $f$ is strictly
increasing, so $f(J(A_m))<f^m(c)$. By the previous proposition,
$J(A_{m+1})\leqslant f(J(A_m))$ and we are done.
\end{proof}

\subsection{Equivalent definition}

This section is devoted to the proof of the following theorem which will provide
the strengthening of Proposition \ref{algebraic definition}.

\begin{te}
Let $G$ be a countable linear sofic group. Then there exists a
morphism $\Psi\colon G\to\Pi_{k\to\omega} GL_{n_k}(\cz)/d_\omega$ such
that $d_\omega(\Psi(g),Id)\geqslant\frac14$ for any $g\neq e$.
\end{te}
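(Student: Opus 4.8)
The plan is to start from an arbitrary linear sofic representation $\Theta\colon G\to\Pi_{k\to\omega} GL_{n_k}(\cz)/d_\omega$ and amplify it using the iterated tensor-square construction $A\mapsto A_m$ developed in the preliminaries. For a single group element $g\neq e$ with representative $(A_k)_k$, the quantity $d_\omega(\Theta(g),Id)=\lim_{k\to\omega}\rho(A_k-Id)$ is some positive number $\delta(g)$, but it may be small. The point of tensoring is that it drives the relevant eigenvalue/Jordan data away from the fixed point $1$: Propositions \ref{technical result} and \ref{M1=1 technical result} show that whenever the multiplicity of the eigenvalue $1$ (measured by $M_1$ in the case $M_1(A)<1$, or by $J$ in the degenerate case $M_1(A)=1$) starts strictly below $1$, iterating $f(x)=x^2+(1-x)^2$ pushes it down toward $1/2$. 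Since $\rho(A_m-Id)\geqslant 1-M_1(A_m)$ and (in the unipotent case) $\rho(A_m-Id)=1-J(A_m)$, this means the distance from the identity is forced up toward $1/2$ after enough tensor powers.

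The key structural point is that tensoring is compatible with the ultraproduct: because $\rho(u\otimes v)=\rho(u)\rho(v)$ (item (5) of Proposition \ref{prop of rk}), the map sending $(A_k)_k$ to $(A_k^{\otimes m})_k=((A_k)_m)_k$ descends to a well-defined \emph{group} morphism $\Theta_m\colon G\to\Pi_{k\to\omega} GL_{n_k^m}(\cz)/d_\omega$, since $(A\otimes A)(B\otimes B)=(AB)\otimes(AB)$ and the tensor power of the identity is the identity. Moreover $\Theta_m$ is still injective: for $g\neq e$ we have $M_1$ (or $J$) of the representative strictly below $1$, so by the two propositions $d_\omega(\Theta_m(g),Id)$ is bounded below away from $0$, which in particular prevents $\Theta_m(g)$ from being the identity. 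Thus each $\Theta_m$ is a genuine linear sofic representation, and the whole job reduces to choosing $m$ uniformly so that \emph{every} nontrivial $g$ lands at distance at least $1/4$.

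The main obstacle, and the reason a single finite $m$ will not suffice, is that $G$ is only assumed countable and $\delta(g)$ can tend to $0$ as $g$ ranges over $G\setminus\{e\}$: the number of tensor powers needed to push a given element past $1/4$ grows as $\delta(g)\to 0$. The natural fix is a diagonal argument across a second ultrafilter. Enumerate $G=\{g_1,g_2,\ldots\}$; for each fixed $g_i$, choose $m_i$ large enough that the amplified representation $\Theta_{m_i}$ satisfies $d_\omega(\Theta_{m_i}(g_j),Id)\geqslant 1/4$ for all $j\leqslant i$ (possible since each of finitely many nontrivial elements can be pushed past $1/4$ simultaneously, taking $m_i$ to be the maximum of the individually required powers, and using monotonicity of $f^m$ toward its limit $\geqslant 1/2>1/4$). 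This produces a sequence of representations $\Theta_{m_i}\colon G\to\Pi_{k\to\omega}GL_{n_k^{m_i}}(\cz)/d_\omega$ whose quality improves as $i\to\infty$.

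Finally I would assemble these into a single representation over a product ultrafilter. Using Proposition \ref{twolimits} and its corollary on iterated ultraproducts, one can form $\Psi\colon G\to\Pi_{(i,k)\to\phi\otimes\omega}M_{n_k^{m_i}}(\cz)/Ker\rho_{\phi\otimes\omega}$ by setting $\Psi(g)=(\Theta_{m_i}(g))_i$, i.e.\ taking the outer ultraproduct over a non-principal $\phi$ on $\nz$ of the family $(\Theta_{m_i})_i$. Since for each fixed $g=g_j$ we have $d_\omega(\Theta_{m_i}(g),Id)\geqslant 1/4$ for all $i\geqslant j$, and $\phi$ is non-principal so ignores the finitely many small indices $i<j$, the outer ultralimit gives $d_{\phi\otimes\omega}(\Psi(g),Id)\geqslant 1/4$ for every $g\neq e$; this simultaneously forces injectivity. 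Landing in $GL$ rather than the full matrix ultraproduct is guaranteed by Proposition \ref{invertible in ultraproduct}. The routine verifications are that $\Psi$ is a morphism (inherited from each $\Theta_{m_i}$ being a morphism) and that the distance bound passes through the product ultrafilter, which is exactly what Proposition \ref{twolimits} delivers.
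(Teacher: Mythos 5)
There is a genuine gap, and it is precisely the point the paper's proof is built around. Your argument rests on the claim that for each nontrivial $g$ one can choose a finite tensor power $m$ pushing $d_\omega(\Theta_m(g),Id)$ past $1/4$, and that taking the maximum of finitely many such $m$'s works for all of them simultaneously (``using monotonicity of $f^m$ toward its limit''). This fails: the propositions you invoke only control the regime $M_1(A)\in(1/2,1)$ (resp.\ $J(A)>1/2$ in the unipotent case). If $\lim_{k\to\omega}M_1(\theta^k(g))\leqslant 1/2$, tensoring can \emph{decrease} the distance to the identity, and can even annihilate the element: if $\theta^k(g)=-Id$ then $M_1=0$ and $d_\omega(\Theta(g),Id)=1$, yet $(-Id)\otimes(-Id)=Id$, so $\Theta_2(g)=Id$. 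Hence your assertion that each $\Theta_m$ is injective is false, and there is in general \emph{no} single $m$ (nor any eventually-valid range of $m$) that serves both an element needing many tensor powers and an element that tensoring destroys. The diagonal/second-ultrafilter assembly cannot repair this, because for a Case-$M_1\leqslant 1/2$ element the good index is $m=1$ only, which every nonprincipal ultrafilter ignores.

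The paper's proof resolves exactly this dichotomy by forming $\Psi=\Psi_1\oplus\Psi_2$, where $\Psi_1$ is the ultraproduct over $m$ of the iterated tensor squares (which handles elements with $\lim_k M_1(\theta^k(g))>1/2$, including the unipotent case via $J$) and $\Psi_2$ is the original representation amplified by identities to the same matrix sizes (which handles elements with $\lim_k M_1(\theta^k(g))\leqslant 1/2$, since there $\rho(\Theta(g)-Id)\geqslant 1-M_1\geqslant 1/2$). Since the normalized rank of a direct sum of equal-sized blocks is the average of the two normalized ranks, every $g\neq e$ scores at least $\tfrac12\cdot\tfrac12=\tfrac14$; this is also where the constant $1/4$ comes from, which your argument does not account for. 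Your tensor-amplification and product-ultrafilter machinery is the right toolkit and matches the paper's $\Psi_1$, but without the $\Psi_2$ summand the proof does not go through.
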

\begin{proof}
Let $\Theta\colon G\to\Pi_{k\to\omega} GL_{n_k}(\cz)/d_\omega$ be a
linear sofic representation of $G$. Let $\theta^k(g)\in GL_{n_k}$
be such that $\Theta(g)=\Pi_{k\to\omega}\theta^k(g)/d_\omega$.
Define $\theta_1^k(g):=\theta^k(g)$ and
$\theta_{m+1}^k(g):=\theta_m^k(g)\otimes\theta_m^k(g)$. Notice that
the matrix dimension of $\theta_m^k(g)$ is $n_k^{2^{m-1}}.$

Construct the linear sofic representation:
\[\Theta_m\colon G\to\Pi_{k\to\omega} GL_{n_k^{2^{m-1}}}(\cz)/d_\omega, \
\ \ \Theta_m(g)=\Pi_{k\to\omega}\theta_m^k(g)/d_\omega,\] and take
the ultraproduct of these representations:
\[\Psi_1\colon G\to\Pi_{(m,k)\to\omega\otimes\omega}
GL_{n_k^{2^{m-1}}}(\cz)/d_{\omega\otimes\omega}, \hbox{ where }\] 
\[\Psi_1(g)=\Pi_{m\to\omega}\Theta_m(g)/d_\omega=
\Pi_{(m,k)\to\omega\otimes\omega}\theta_m^k(g)/d_{\omega\otimes\omega}.\]
Also construct an amplification of $\Theta$ to this sequence of
matrix dimensions:
\[\Psi_2\colon G\to\Pi_{(m,k)\to\omega\otimes\omega}
GL_{n_k^{2^{m-1}}}(\cz)/d_{\omega\otimes\omega}, \hbox{ where }\] 
\[\Psi_2(g)=\Pi_{(m,k)\to\omega\otimes\omega}\theta^k(g)\otimes
Id_{n_k^{2^{m-1}-1}}/d_{\omega\otimes\omega}.\]

Define $\Psi=\Psi_1\oplus\Psi_2$,
$\Psi\colon G\to\Pi_{(m,k)\to\omega\otimes\omega}
GL_{2n_k^{2^{m-1}}}(\cz)/d_{\omega\otimes\omega}$ such that:
\[\rho_{\omega\otimes\omega}(\Psi(g)-Id)=\frac12\big(\rho_{\omega\otimes\omega}
(\Psi_1(g)-Id)+\rho_{\omega\otimes\omega}(\Psi_2(g)- Id)\big).\]

\begin{claim}
For any $g\in G,$ we have
$\rho_{\omega\otimes\omega}(\Psi(g)-Id)\geqslant 1/4$.
\end{claim}

Assume that $\lim_{k\to\omega} M_1(\theta^k(g))\leqslant1/2$. Then
$\lim_{n\to\omega}\lim_{k\to\omega} M_1(\theta^k(g)\otimes
Id)\leqslant1/2$. It follows that
$\rho_{\omega\otimes\omega}(\Psi_2(g)-Id)\geqslant1/2,$ hence
$\rho_{\omega\otimes\omega}(\Psi(g)-Id)\geqslant 1/4$ and we are
done. We are left with the case $\lim_{k\to\omega}
M_1(\theta^k(g))>1/2$.

Assume that $\lim_{k\to\omega} M_1(\theta^k(g))<1$. Then there
exist $c\in(1/2,1)$ and $F\in\omega$ such that
$1/2<M_1(\theta^k(g))<c$ for all $k\in F$. It follows by Lemma
\ref{technical result} that:
\[M_1(\theta_m^k(g))<f^{m-1}(c),\ \ \forall k\in F,\ m\in\nz.\]
Then $\lim_{k\to\omega}M_1(\theta_m^k(g))\leqslant f^{m-1}(c)$ for
all $m\in\nz$. We get the inequality:
\[\lim_{m\to\omega}\lim_{k\to\omega}M_1
(\theta_m^k(g))\leqslant\lim_{m\to\omega}f^{m-1}(c)=1/2.\] As a
consequence $\rho_{\omega\otimes\omega}(\Psi_1(g)-Id)\geqslant1/2$,
so we have $\rho_{\omega\otimes\omega}(\Psi(g)-Id)\geqslant 1/4$.

We are left with the case $\lim_{k\to\omega} M_1(\theta^k(g))=1$.
We can assume that $M_1(\theta^k(g))=1$ for each $k$. It follows
that $M_1(\theta_m^k(g))=1$ for each $m$ and $k$. The proof is
similar to the previous case, using $J(\theta_m^k(g))$ instead of
$M_1(\theta_m^k(g))$, the equation $\rho(A)=1-J(A),$ and
Proposition~\ref{M1=1 technical result} instead of Proposition~\ref{technical result}.
\end{proof}

The structure of the group does not play a role in the proof as
$\Theta(g_1)$ does not interact with $\Theta(g_2)$ for $g_1\neq
g_2$. The construction is possible even if we have just a subset
of our group.

\begin{p}\label{large distance}
Let $G$ be a countable group and let $E$ be a subset of $G$.
Consider a
function $\Phi\colon E\to\Pi_{k\to\omega}GL_{n_k}(\cz)/d_\omega$  such that $\Phi(g)\Phi(h)=\Phi(gh)$ whenever $g,h,gh\in
E$. Then there exists
$\Psi\colon E\to\Pi_{k\to\omega}GL_{m_k}(\cz)/d_\omega$ such that
$\Psi(g)\Psi(h)=\Psi(gh)$ whenever {$g,h,gh\in E$} and:
\begin{align*}
&\Phi(g)=\Phi(h)\Longrightarrow\Psi(g)=\Psi(h)\\
&\Phi(g)\neq\Phi(h)\Longrightarrow
d_\omega(\Psi(g),\Psi(h))\geqslant\frac14.
\end{align*}
\end{p}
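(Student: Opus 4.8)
The plan is to run, essentially verbatim, the amplification construction of the preceding theorem, but applied to the partial map $\Phi$ rather than to a global morphism, and to reduce the separation of two distinct values $\Psi(g)\neq\Psi(h)$ to the separation of a single element from the identity. The device that makes this reduction legitimate is the bi-invariance of the rank metric on $GL$ together with the compatibility of inverses and products with the tensor square and the ampliation.

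First I would fix representatives $\phi^k(g)\in GL_{n_k}(\cz)$ with $\Phi(g)=\Pi_{k\to\omega}\phi^k(g)/d_\omega$ for $g\in E$, set $\phi^k_1(g):=\phi^k(g)$ and $\phi^k_{m+1}(g):=\phi^k_m(g)\otimes\phi^k_m(g)$, and form $\Psi_1$ as the double ultraproduct (over $m$ and $k$) of these iterated tensor squares, $\Psi_2$ as the ampliation $g\mapsto\phi^k(g)\otimes Id$, and $\Psi=\Psi_1\oplus\Psi_2$, landing in $\Pi_{(m,k)\to\omega\otimes\omega}GL_{2n_k^{2^{m-1}}}(\cz)/d_{\omega\otimes\omega}$; by the corollary following Proposition \ref{twolimits} this double ultraproduct is, after relabeling, of the form $\Pi_{k\to\omega}GL_{m_k}(\cz)/d_\omega$ required by the statement. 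The key observation is the identity $\phi^k_m(h)^{-1}\phi^k_m(g)=B^k_m$, where $B^k:=\phi^k(h)^{-1}\phi^k(g)$ and $B^k_m$ is the $m$-th iterated tensor square of $B^k$; this follows by induction from $(X\otimes X)^{-1}(Y\otimes Y)=(X^{-1}Y)\otimes(X^{-1}Y)$, and similarly $(\phi^k(h)\otimes Id)^{-1}(\phi^k(g)\otimes Id)=B^k\otimes Id$. By left-invariance of the rank metric, $d_\omega(\Psi(g),\Psi(h))=\rho_{\omega\otimes\omega}(\Psi(h)^{-1}\Psi(g)-Id)$, and the direct-sum formula turns this into $\frac12$ of the sum of the two $\rho_{\omega\otimes\omega}$-values of the amplification of $B:=\Phi(h)^{-1}\Phi(g)$. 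In other words, separating $\Psi(g)$ from $\Psi(h)$ is exactly the problem of separating the amplification of $B$ from the identity, which the preceding theorem already solves.

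With this reduction in hand, both implications follow. That $\Psi$ is a partial morphism, and that $\Phi(g)=\Phi(h)$ forces $\Psi(g)=\Psi(h)$, come from the elementary estimate $\rho(X\otimes X-Y\otimes Y)\leqslant 2\rho(X-Y)$, obtained from $X\otimes X-Y\otimes Y=X\otimes(X-Y)+(X-Y)\otimes Y$ and properties (2) and (5) of Proposition \ref{prop of rk}: induction on $m$ propagates $\lim_k\rho(\phi^k(g)\phi^k(h)-\phi^k(gh))=0$ and $\lim_k\rho(\phi^k(g)-\phi^k(h))=0$ through every tensor power, and the ampliation and direct-sum blocks behave identically. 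For the nontrivial implication, $\Phi(g)\neq\Phi(h)$ means $B\neq Id$, i.e. $\lim_k\rho(B^k-Id)>0$, and I would run the three-case analysis of the preceding theorem on $B^k$ in place of $\theta^k(g)$: if $\lim_k M_1(B^k)\leqslant 1/2$ the $\Psi_2$-block gives $\rho(B^k\otimes Id-Id)=\rho(B^k-Id)\geqslant 1/2$; if $1/2<\lim_k M_1(B^k)<1$, Proposition \ref{technical result} drives $M_1(B^k_m)$ down to $1/2$ so the $\Psi_1$-block contributes $\geqslant 1/2$; and if $\lim_k M_1(B^k)=1$, the identity $\rho(B^k-Id)=1-J(B^k)$ with $B\neq Id$ gives $\lim_k J(B^k)<1$, whence Proposition \ref{M1=1 technical result} together with the monotonicity $J(A\otimes A)\leqslant J(A)$ again yields $\geqslant 1/2$ from the $\Psi_1$-block. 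In every case $\rho_{\omega\otimes\omega}(\Psi(h)^{-1}\Psi(g)-Id)\geqslant 1/4$.

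The main obstacle is the bookkeeping that makes the reduction valid, namely verifying that inverses, products and the tensor-square/ampliation operations commute in the way claimed, so that the two-point quantity $\Psi(h)^{-1}\Psi(g)$ collapses to a genuine iterated tensor square of the single element $B$. Once that identity is secured, the analytic heart is literally the computation already carried out in the preceding theorem, and no new rank estimate is needed. A secondary point to watch is that the case $M_1=1$ uses $B\neq Id$ only through $\lim_k J(B^k)<1$, so one must confirm that $J$ is not driven below what the argument needs, which is exactly the content of $J(A\otimes A)\leqslant J(A)$.
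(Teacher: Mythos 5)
Your proposal is correct and follows exactly the route the paper intends: the paper offers no separate proof of this proposition, only the remark that the construction $\Psi=\Psi_1\oplus\Psi_2$ of the preceding theorem applies verbatim to a partial map on a subset. You have in fact supplied the one detail the paper leaves implicit, namely that bi-invariance of $d_{rk}$ together with $(X\otimes X)^{-1}(Y\otimes Y)=(X^{-1}Y)\otimes(X^{-1}Y)$ collapses the two-point quantity $\Psi(h)^{-1}\Psi(g)$ to the amplification of the single element $B=\Phi(h)^{-1}\Phi(g)$, after which the three-case analysis of the theorem applies unchanged.
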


Now we can provide a stronger version of the algebraic characterization
of linear soficity contained in Proposition \ref{algebraic definition} using
this extra information that we obtained.

\begin{p}\label{16 algebraic definition}
A group $G$ is linear sofic if and only if for any  finite subset $E\subset G$ and
for any $\vp>0$ there exists $n\in\nz$ and a function $\phi\colon E\to
GL_n(\cz)$ such that:
\begin{enumerate}
\item $\forall g,h,gh\in E$ we have
$\rho(\phi(g)\phi(h)-\phi(gh))<\vp;$ \item $\forall g\in E$ we
have $\rho(1-\phi(g))>\frac14-\vp.$
\end{enumerate}
\end{p}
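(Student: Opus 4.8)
The plan is to derive this stronger algebraic characterization from Proposition~\ref{algebraic definition} together with the just-proved Theorem that upgrades a linear sofic representation to one with uniform distance $\tfrac14$ (equivalently, Proposition~\ref{large distance}). The key point is that the new condition~(2), namely $\rho(1-\phi(g))>\tfrac14-\ve$ for \emph{all} $g\in E\setminus\{e\}$, replaces the element-dependent lower bound $\delta(g)-\ve$ from Proposition~\ref{algebraic definition} by the uniform constant $\tfrac14$. So the forward direction is the substantive one: I would show that a linear sofic group admits, for each finite $E$ and $\ve>0$, a map $\phi$ as in the statement.

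First I would reduce the forward implication to the existence of a single representation with uniform distance. By the Theorem proved immediately above, if $G$ is linear sofic there is a morphism $\Psi\colon G\to\Pi_{k\to\omega}GL_{m_k}(\cz)/d_\omega$ with $d_\omega(\Psi(g),Id)\geqslant\tfrac14$ for every $g\neq e$. This is itself a linear sofic representation, so applying the (already established) easy direction of Proposition~\ref{algebraic definition} to $\Psi$ gives, for the fixed finite set $E$ and the given $\ve>0$, a dimension $n$ and a function $\phi\colon E\to GL_n(\cz)$ satisfying the almost-multiplicativity condition~(1). The associated length function here is $\delta_\Psi(g)=d_\omega(\Psi(g),Id)$, so condition~(2) of Proposition~\ref{algebraic definition} reads $\rho(1-\phi(g))>\delta_\Psi(g)-\ve$. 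The uniform lower bound $\delta_\Psi(g)\geqslant\tfrac14$ then upgrades this directly to $\rho(1-\phi(g))>\tfrac14-\ve$, which is exactly condition~(2) of Proposition~\ref{16 algebraic definition}. (One should note the harmless point that the statement quantifies $g\in E$, and for $g=e$ if it lies in $E$ the multiplicativity condition forces $\phi(e)$ near the identity; this is absorbed by reading condition~(2) over $g\neq e$, consistent with the convention that $\delta(e)=0$.)

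For the converse I would run the standard ultraproduct assembly. Given that for every finite $E$ and every $\ve>0$ such a $\phi$ exists, I would enumerate $G=\{g_1,g_2,\dots\}$, take an increasing exhaustion $E_k\uparrow G$ by finite subsets, and for each $k$ choose $\phi_k\colon E_k\to GL_{n_k}(\cz)$ witnessing the statement with $\ve=1/k$. Defining $\Theta(g)=\Pi_{k\to\omega}\phi_k(g)/d_\omega$ for $g$ eventually in the $E_k$, condition~(1) with $\ve\to0$ forces $\Theta$ to be a group morphism in the ultraproduct, while condition~(2) gives $d_\omega(\Theta(g),Id)=\lim_{k\to\omega}\rho(1-\phi_k(g))\geqslant\tfrac14>0$ for each $g\neq e$, whence $\Theta$ is injective. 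This is the routine half and mirrors the ultrafilter-independence remark already made after Proposition~\ref{algebraic definition}.

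The main obstacle is not in either direction separately but in making sure the uniform constant $\tfrac14$ is genuinely transported, i.e.\ that invoking the preceding Theorem is legitimate rather than circular. The Theorem's proof used amplification over the product ultrafilter $\omega\otimes\omega$ and produced a representation on a new dimension sequence $2n_k^{2^{m-1}}$; I would emphasize that by the product-of-ultrafilters Corollary this target is again an ordinary rank ultraproduct $\Pi GL_{m_k}(\cz)/d_\omega$ over a single reindexed sequence, so Proposition~\ref{algebraic definition} applies to it verbatim. Once that identification is in hand, the rest is bookkeeping, and the equivalence follows.
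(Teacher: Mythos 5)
Your proposal is correct and is essentially the paper's own argument: the paper leaves the proof of Proposition~\ref{16 algebraic definition} implicit, stating only that it follows from the preceding Theorem (uniform distance $\geqslant\frac14$) combined with Proposition~\ref{algebraic definition}, which is exactly the reduction you carry out, including the identification of the iterated ultraproduct with an ordinary one via the product-ultrafilter Corollary. Your remarks on the converse assembly and on reading condition~(2) over $g\neq e$ are consistent with the paper's conventions.
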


\section{Rational linear soficity}\label{sec:q}

This section is devoted to proving that in the definition of
linear sofic group (see Definition~\ref{def:lsofic}) we can use the groups $GL_n(\qz)$ endowed with
the rank metric. In other words, the existence of a complex linear sofic representation is equivalent to
the existence of a rational linear sofic representation.

\begin{lemma}\label{lsoficwithbase}
A group $G$ is linear sofic if and only if for any  finite subset
$E\subset G$ and for any $\vp>0$ there exist $n\in\nz$ and a
function $\phi\colon E\to GL_n(\rz)$ such that:
\begin{enumerate}
\item $\forall g,h,gh\in E$ at least $(1-\vp)n$ columns of the matrix $\phi(g)\phi(h)$
are equal to the corresponding columns in $\phi(gh);$
 \item $\forall g\in E$ we
have $\rho(1-\phi(g))>\frac14-\vp.$
\end{enumerate}
\end{lemma}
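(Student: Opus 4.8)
The plan is to derive this from the complex algebraic characterization of Proposition~\ref{16 algebraic definition}, the passage to $\rz$ of Observation~\ref{vectorspaces}, and a single change-of-basis trick. One direction is immediate: if the $j$-th column of $\phi(g)\phi(h)$ equals the $j$-th column of $\phi(gh)$, then the $j$-th column of $\phi(g)\phi(h)-\phi(gh)$ is zero, so condition~(1) of the lemma forces $\phi(g)\phi(h)-\phi(gh)$ to have at most $\varepsilon n$ nonzero columns; since the rank is bounded by the number of nonzero columns, $\rho(\phi(g)\phi(h)-\phi(gh))\leqslant\varepsilon$. As a real matrix is a complex matrix of the same normalized rank, such real functions $\phi$ (for all $E$ and all $\varepsilon>0$) satisfy the hypotheses of Proposition~\ref{16 algebraic definition}, and hence $G$ is linear sofic.

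For the converse, assume $G$ is linear sofic and fix $E$ and $\varepsilon>0$. Put $\varepsilon'=\varepsilon/|E|^2$. Proposition~\ref{16 algebraic definition} yields $\phi_0\colon E\to GL_n(\cz)$ with $\rho(\phi_0(g)\phi_0(h)-\phi_0(gh))<\varepsilon'$ and $\rho(1-\phi_0(g))>\frac14-\varepsilon'$. Realifying as in Observation~\ref{vectorspaces}, where $M_n(\cz)\to M_{2n}(\rz)$ is a unital $\rz$-algebra homomorphism that doubles ranks and hence preserves $\rho$, we obtain $\phi\colon E\to GL_m(\rz)$ with $m=2n$ and the same two rank estimates. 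For the pairs with $g,h,gh\in E$ write $D_{g,h}:=\phi(g)\phi(h)-\phi(gh)$, so that $rk(D_{g,h})<\varepsilon' m$.

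The key step is a conjugation that concentrates all these low-rank errors into few columns. Let $W:=\bigcap_{g,h}\ker D_{g,h}\subseteq\rz^m$, the intersection running over the at most $|E|^2$ relevant pairs. By subadditivity of codimension, $\dim W\geqslant m-\sum_{g,h}rk(D_{g,h})>m-|E|^2\varepsilon' m=(1-\varepsilon)m$. Choose $P\in GL_m(\rz)$ whose first $\dim W$ columns form a basis of $W$, and set $\psi(g):=P^{-1}\phi(g)P\in GL_m(\rz)$. Conjugation is a rank-preserving homomorphism, so $\psi(g)\psi(h)-\psi(gh)=P^{-1}D_{g,h}P$ and $\rho(1-\psi(g))=\rho(1-\phi(g))>\frac14-\varepsilon$, which gives condition~(2). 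Moreover, for each index $j\leqslant\dim W$ the $j$-th column of $P$ lies in $W\subseteq\ker D_{g,h}$, so the $j$-th column of $P^{-1}D_{g,h}P$ vanishes; thus $\psi(g)\psi(h)$ and $\psi(gh)$ share at least $\dim W>(1-\varepsilon)m$ columns, which is condition~(1).

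The one genuinely non-formal point, and where I would be most careful, is the alignment: small normalized rank only guarantees a large kernel, not one aligned with the coordinate axes, so turning ``low rank'' into ``many literally equal columns'' requires the global change of basis $P$. The fact that makes this work uniformly over $E$ is that the common kernel $W$ of the finitely many error matrices still has codimension $O(\varepsilon m)$, which is precisely why the rescaling $\varepsilon'=\varepsilon/|E|^2$ suffices.
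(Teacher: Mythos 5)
Your proof is correct and follows essentially the same route as the paper: apply Proposition~\ref{16 algebraic definition}, pass to $\rz$ via Observation~\ref{vectorspaces}, and then change basis so that the common kernel $W=\bigcap\ker\bigl(\phi(g)\phi(h)-\phi(gh)\bigr)$, of codimension at most $\sum rk(D_{g,h})<\vp m$, is spanned by the first coordinates, which forces the corresponding columns to agree literally. Your conjugation by $P$ is exactly the paper's ``choose a basis of $V$ and complete it'' step, and your slightly more generous rescaling $\vp/|E|^2$ in place of $\vp/|E_1|$ changes nothing.
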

\begin{proof}
Elements in $GL_n(\rz)$ are invertible linear transformations on
$\rz^n$. These elements are matrices as soon as we fix a basis for
the vector space $\rz^n$. As the second condition does not depend
on the particular choice of a basis, we only need to concentrate
our efforts to constructing a basis such that the first condition
holds.

Let $E_1=\{(g,h)\in E^2\mid gh\in E\}$. Then $E_1$ is a finite set.
Take $\delta:=\vp/|E_1|$. Apply Proposition \ref{16 algebraic
definition} for $E$ and $\delta$ to get $n\in\nz$ and  a function
$\phi\colon E\to GL_n(\rz)$ (use Observation \ref{vectorspaces} to
replace $\cz$ by $\rz$). For each $(g,h)\in E_1$ let
$V_{g,h}\subset\rz^n$ be the linear subspace on which
$\phi(g)\phi(h)=\phi(gh)$. By condition $(1)$ of Proposition
\ref{16 algebraic definition} it follows that $\dim V_{g,h}>
(1-\delta)n$ for any $(g,h)\in E_1$. Let $V=\bigcap_{(g,h)\in
E_1}V_{g,h}$ . Then $\dim V>(1-|E_1|\delta)n=(1-\vp)n$. Choose a
basis in $V$ and complete it to a basis in $\rz^n$.

Using this basis we can see elements in $\phi(E)$ as matrices. It
is clear now by construction that the first condition holds.
\end{proof}

We denote by $\overline\qz$ the field of real algebraic numbers.
The next step in the proof is to replace the function $\phi\colon E\to
GL_n(\rz)$ by another function $\psi\colon E\to GL_n(\overline\qz)$. In order to achieve this we will use the following variant of the fundamental
result of real semialgebraic geometry, so-called
\emph{Positivstellensatz}.

\begin{te}(\cite[Theorem 4.4.2, p. 92]{rag})
Let $R$ be a real closed field. Let $(f_j)_{j=1,\ldots,s},
(g_k)_{k=1,\ldots,t},$ and $(h_l)_{l=1,\ldots,u}$ be finite
families of polynomials in $R[X_1,\ldots,X_d]$. Denote by $P$ the
cone generated by $(f_j)_{j=1,\ldots,s}$, by $M$ the multiplicative
monoid generated by $(g_k)_{k=1,\ldots,t},$ and by $I$ the ideal
generated by $(h_l)_{l=1,\ldots,u}$. Then the following properties
are equivalent:
\begin{enumerate}
\item The set $\{x\in R^d\mid f_j(x)\geqslant 0\ \forall j,\ 
g_k(x)\neq 0\ \forall k,\ h_l(x)=0\ \forall l\}$ is empty.\\
\item There exist $f\in P$, $g\in M,$ and $h\in I$ such that
$f+g^2+h=0$.
\end{enumerate}
\end{te}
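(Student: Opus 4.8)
The plan is to prove the two implications separately, since one is elementary and the other carries the whole depth of the statement. First I would dispatch $(2)\Rightarrow(1)$. Suppose a point $x\in R^d$ belonged to the set while $f+g^2+h=0$ for some $f\in P$, $g\in M$, $h\in I$. Each element of the cone $P$ is a finite sum of terms $\sigma\cdot f_{j_1}\cdots f_{j_r}$ with $\sigma$ a sum of squares, so $f(x)\geqslant 0$ because every $f_j(x)\geqslant 0$; each element of the monoid $M$ is a product of the $g_k$, so $g(x)\neq 0$ and $g(x)^2>0$; and each element of the ideal $I$ vanishes wherever all the $h_l$ vanish, so $h(x)=0$. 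This forces $0=f(x)+g(x)^2+h(x)>0$, a contradiction, whence the set is empty.

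The substance is the forward implication $(1)\Rightarrow(2)$, which I would establish in contrapositive form: assuming that \emph{no} identity $f+g^2+h=0$ exists, I would manufacture a point of the set. The route runs through the real spectrum $\mathrm{Sper}(A)$ of the polynomial ring $A=R[X_1,\ldots,X_d]$. The \emph{abstract Positivstellensatz} --- a purely algebraic statement proved by a Zorn's lemma argument on top of the Artin--Schreier theory of orderings --- asserts that the absence of such an algebraic identity is exactly equivalent to the existence of a prime cone (ordering) $\alpha$ of $A$ with $P\subseteq\alpha$, every $g_k$ outside the support of $\alpha$, and the ideal $I$ contained in that support. Such an $\alpha$ is a point of $\mathrm{Sper}(A)$ realizing the sign conditions $f_j\geqslant 0$, $g_k\neq 0$, $h_l=0$.

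It then remains to descend this abstract point to an honest $R$-rational point. For this I would invoke the Artin--Lang homomorphism theorem --- equivalently, Tarski's transfer principle together with the model completeness of the theory of real closed fields --- which guarantees that, over a real closed field $R$, any point of the real spectrum realizing a finite system of sign conditions is witnessed by a genuine point $x\in R^d$ satisfying the same conditions. This $x$ lies in the set, contradicting $(1)$ and closing the contrapositive.

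I expect the main obstacle to be the abstract Positivstellensatz itself: the Zorn's lemma construction of a maximal ordering that simultaneously contains the cone $P$ and keeps the ideal $I$ in its support while excluding the multiplicative set $M$ from that support, and the verification that the object produced is genuinely a prime cone. The second delicate ingredient is the Artin--Lang passage from $\mathrm{Sper}(A)$ down to $R^d$, which is precisely where real-closedness of $R$ enters; without it an abstract point of the real spectrum need not descend to an $R$-point. In the paper one would of course simply cite \cite{rag}, the above being the conceptual content standing behind that citation.
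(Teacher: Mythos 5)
The paper does not prove this statement at all: it is imported verbatim from \cite{rag} (Theorem 4.4.2 there) and used as a black box, so there is no in-paper argument to compare against. Your outline is correct and is essentially the standard proof from that reference --- the elementary evaluation argument for $(2)\Rightarrow(1)$, and for $(1)\Rightarrow(2)$ the abstract Positivstellensatz on the real spectrum followed by the Artin--Lang descent to an $R$-point --- with the two genuinely hard ingredients correctly identified rather than proved, which is consistent with the paper's own decision to simply cite \cite{rag}.
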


\begin{cor}
Let $(g_k)_{k=1,\ldots,t}$ and $(h_l)_{l=1,\ldots,u}$ be finite
families of polynomials in $\qz[X_1,\ldots,X_d]$. If there exists
a real solution $x\in\rz^d$ to the system:
\begin{align*}
&g_k(x)\neq 0\ &k=1,\ldots,t, \\ &h_l(x)=0\ &l=1,\ldots,u,
\end{align*}
then there is also a solution $x\in\overline\qz^d$.
\end{cor}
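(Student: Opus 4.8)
The plan is to derive this corollary directly from the Positivstellensatz stated just above, applied with the appropriate real closed field and an empty family of inequalities. The key idea is that over-the-rationals systems of the form $g_k \neq 0$, $h_l = 0$ should have solutions in the real algebraic numbers $\overline{\qz}$ precisely because $\overline{\qz}$ is itself a real closed field, and solvability over one real closed field should be detectable by the same polynomial certificate over any other.

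First I would take $R = \overline{\qz}$, the field of real algebraic numbers, which is a real closed field, so the theorem applies to it. Since our polynomials have rational coefficients, they certainly lie in $\overline{\qz}[X_1,\ldots,X_d]$. I would use the family $(g_k)_{k=1,\ldots,t}$ for the ``$\neq 0$'' constraints and the family $(h_l)_{l=1,\ldots,u}$ for the ``$=0$'' constraints, while taking the family of ``$\geqslant 0$'' polynomials $(f_j)$ to be empty (so the cone $P$ is just the set of finite sums of squares with coefficients from the cone, i.e.\ the nonnegative cone generated by the empty family). The strategy is contrapositive: suppose the system has no solution in $\overline{\qz}^d$. Then by condition (1) of the Positivstellensatz (applied over $R = \overline{\qz}$) the semialgebraic set is empty, so condition (2) gives polynomials $f \in P$, $g \in M$, and $h \in I$ with $f + g^2 + h = 0$.

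The crucial step is to transport this identity to the real field $\rz$. The equation $f + g^2 + h = 0$ is a polynomial identity holding in $\overline{\qz}[X_1,\ldots,X_d]$, where $f$ is a sum of squares (from the empty-generated cone), $g$ lies in the monoid generated by the $g_k$, and $h$ lies in the ideal generated by the $h_l$. Since $\overline{\qz} \subset \rz$, the very same $f$, $g$, $h$ witness that condition (2) of the Positivstellensatz holds over $R = \rz$ as well — a sum of squares is still a sum of squares, a monoid element is still a monoid element, and an ideal element is still an ideal element after the coefficient-field extension, because all the generators have rational (hence real) coefficients. Therefore, by the equivalence of (1) and (2) applied now over $R = \rz$, the real semialgebraic set $\{x \in \rz^d \mid g_k(x) \neq 0\ \forall k,\ h_l(x) = 0\ \forall l\}$ is empty, contradicting the hypothesis that a real solution $x \in \rz^d$ exists. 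This contradiction yields a solution in $\overline{\qz}^d$.

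The main obstacle to watch carefully is the \emph{invariance of the algebraic certificate under the field extension} $\overline{\qz} \hookrightarrow \rz$: one must check that the membership conditions $f \in P$, $g \in M$, $h \in I$ are genuinely preserved, which hinges on the generating polynomials $g_k, h_l$ having coefficients in $\qz \subset \overline{\qz} \subset \rz$ so that the cone, monoid, and ideal they generate behave compatibly across the inclusion. A subtle point is that $f$, being a sum of squares in $\overline{\qz}[X]$, need not have rational coefficients, but this causes no difficulty since we only move in the direction of enlarging the field, from $\overline{\qz}$ to $\rz$, where such coefficients remain valid. The argument is essentially a transfer principle dressed up through the explicit Positivstellensatz certificate, and once the preservation of the certificate is verified the conclusion is immediate.
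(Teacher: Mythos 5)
Your proof is correct and follows essentially the same route as the paper: apply the Positivstellensatz over the real closed field $\overline\qz$ with an empty family of inequality constraints, extract the certificate $f+g^2+h=0$ from the assumed emptiness over $\overline\qz$, and observe that this identity persists in $\rz[X_1,\ldots,X_d]$, contradicting the existence of a real solution. Your extra remark about the certificate surviving the field enlargement is the same (implicit) observation the paper relies on.
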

\begin{proof}
Let $P$ be the smallest cone in $\overline\qz[X_1,\ldots,X_d]$,
that is $P$ contains squares in $\overline\qz[X_1,\ldots,X_d]$ and it
is closed under addition and multiplication by positive scalars.
Let also $M$ be the multiplicative monoid generated by
$(g_k)_{k=1,\ldots,t}$ and $I$ the ideal generated by
$(h_l)_{l=1,\ldots,u}$ in $\overline\qz[X_1,\ldots,X_d]$.

If there is no solution $x\in\overline\qz^d$ to the system above, then
according to the previous theorem there exist $f\in P$, $g\in M,$
and $h\in I$ such that $f+g^2+h=0$. However, this equation also holds in
$\rz[X_1,\ldots,X_d]$ so there should not exist a solution
$x\in\rz^d$.
\end{proof}

\begin{p}
A group $G$ is linear sofic if and only if for any  finite subset $E\subset G$ and
for any $\vp>0$ there exist $n\in\nz$ and a function $\phi\colon E\to
GL_n(\overline\qz)$ such that:
\begin{enumerate}
\item $\forall g,h,gh\in E$ we have
$\rho(\phi(g)\phi(h)-\phi(gh))<\vp;$ \item $\forall g\in E$ we
have $\rho(1-\phi(g))>\frac14-\vp.$
\end{enumerate}
\end{p}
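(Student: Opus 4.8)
The plan is to prove the two implications separately, with the forward direction being immediate and the reverse direction being the heart of the matter, handled by the transfer principle of the preceding Corollary. For the \textbf{if} direction, suppose the stated approximation property over $\overline\qz$ holds. Since $\overline\qz\subseteq\rz\subseteq\cz$ and the normalized rank of a matrix is unchanged under field extension, any function $\phi\colon E\to GL_n(\overline\qz)$ satisfying the two conditions may be read as a function into $GL_n(\cz)$ satisfying exactly the hypotheses of Proposition~\ref{16 algebraic definition}. Hence $G$ is linear sofic, and this direction requires nothing further.

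For the \textbf{only if} direction, I would start from a real approximation and then descend to $\overline\qz$. Given a finite set $E\subset G$ and $\vp>0$, Lemma~\ref{lsoficwithbase} (applied, say, with tolerance $\vp/2$ to secure strict inequalities) supplies $n\in\nz$ and $\phi\colon E\to GL_n(\rz)$ for which more than $(1-\vp)n$ columns of $\phi(g)\phi(h)$ agree with those of $\phi(gh)$ whenever $g,h,gh\in E$, and $\rho(1-\phi(g))>\frac14-\vp$ for all $g\in E$. The idea is to view the $|E|\cdot n^2$ matrix entries as indeterminates $X_{g,i,j}$ and to encode the data of \emph{this particular} real solution as a system of polynomial equations and inequations with integer coefficients. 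Concretely: for each pair with $g,h,gh\in E$ record the set $S_{g,h}$ of columns on which $\phi(g)\phi(h)=\phi(gh)$, and impose the exact column identities $\sum_k X_{g,i,k}X_{h,k,j}=X_{gh,i,j}$ for $j\in S_{g,h}$ as equations $h_l=0$; impose $\det(X_g)\neq 0$ for each $g\in E$ as inequations $g_k\neq 0$ to guarantee invertibility; and, for each $g$, fix one $r\times r$ minor of $1-\phi(g)$, with $r=\lfloor(\frac14-\vp)n\rfloor+1$, that is nonzero in the real solution and impose its nonvanishing as a further inequation. By construction $\phi$ is a real solution of this system.

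Applying the preceding Corollary, the system then admits a solution over $\overline\qz$, yielding matrices $\psi(g)\in M_n(\overline\qz)$. The determinant inequations force $\psi(g)\in GL_n(\overline\qz)$; the exact column identities guarantee that $\psi(g)\psi(h)-\psi(gh)$ has more than $(1-\vp)n$ vanishing columns, so $rk(\psi(g)\psi(h)-\psi(gh))<\vp n$ and condition~(1) holds; and the surviving nonzero minor gives $rk(1-\psi(g))\geqslant r>(\frac14-\vp)n$, which is condition~(2). The main obstacle is precisely the encoding step: the rank lower bound and the ``many matching columns'' requirement are \emph{a priori} existential statements (they assert that \emph{some} $r\times r$ minor is nonzero, respectively that \emph{some} large set of columns matches) and so are not themselves polynomial conditions. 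The key observation that unblocks this is that the real solution already exhibits explicit witnesses for these existentials, so by freezing the combinatorial choices---which columns match and which minor is nonvanishing---each requirement collapses to a single polynomial equation or inequation over $\qz$, bringing the whole problem within the scope of the Positivstellensatz transfer.
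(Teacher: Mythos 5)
Your proposal is correct and follows essentially the same route as the paper: obtain a real solution via Lemma~\ref{lsoficwithbase}, freeze the witnessing columns and the nonvanishing minor so that the rank conditions become polynomial equations and inequations over $\qz$, and then transfer to $\overline\qz$ by the Positivstellensatz corollary. Your explicit inclusion of the inequations $\det(X_g)\neq 0$ to guarantee invertibility of the $\overline\qz$-solution is a detail the paper leaves implicit, but otherwise the two arguments coincide.
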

\begin{proof}
Using $E$ and $\vp$, apply Lemma \ref{lsoficwithbase} to get a
function $\phi\colon E\to GL_n(\rz)$. Recall from the proof of that
lemma that $E_1=\{(g,h)\in E^2\mid gh\in E\}.$

We regard conditions $(1)$ and $(2)$ of Lemma
\ref{lsoficwithbase} as a system of equations and non-equations.
The variables of this system are the $n^2|E|$ entries of matrices
in $\phi(E)$.

Condition $(1)$ in Lemma \ref{lsoficwithbase} provides more than
$(1-\vp)n$ equations for each pair $(g,h)\in E_1$. These equations
are enough to deduce that $\rho(\phi(g)\phi(h)-\phi(gh))<\vp$. For
each $g\in E$ choose a minorant of $1-\phi(g)$ of size greater
than $(1/4-\vp)n\times(1/4-\vp)n$ of nonzero determinant. This
information will provide a non-equation.

Apply now the previous corollary to get a solution to our system
in $\overline\qz^{n^2|E|}$. Using this solution we construct a
map $\phi\colon E\to GL_n(\overline\qz)$ with the required properties.
\end{proof}

\begin{te}\label{lsoficityq}
Let $G$ be a linear sofic group. Then there exists an injective
morphism $\Theta\colon G\to\Pi_{k\to\omega}GL_{n_k}(\qz)$.
\end{te}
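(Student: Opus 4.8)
The plan is to take the characterization over the field $\overline\qz$ of real algebraic numbers supplied by the preceding proposition and to descend to $\qz$ by restriction of scalars, exactly as Observation~\ref{vectorspaces} descends from $\cz$ to $\rz$; once a purely rational form of the algebraic characterization is in hand, the injective morphism into the ultraproduct is built by the standard diagonal construction.

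First I would fix a finite $E\subset G$ and $\vp>0$ and invoke the preceding proposition to get $n\in\nz$ and $\phi\colon E\to GL_n(\overline\qz)$ satisfying the two approximation conditions. The matrices $\phi(g)$, $g\in E$, involve only finitely many entries, so these entries lie in a single finite extension $K\subset\overline\qz$ of $\qz$; set $d=[K:\qz]$. Regarding $K^n$ as a $\qz$-vector space of dimension $dn$ turns each $\phi(g)\in GL_n(K)$ into a $\qz$-linear automorphism $\tilde\phi(g)\in GL_{dn}(\qz)$, and $a\mapsto\tilde a$ is a ring homomorphism $M_n(K)\to M_{dn}(\qz)$, so products and differences are preserved. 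As in Observation~\ref{vectorspaces}, the normalized rank is unchanged: a $K$-linear map of $K$-rank $r$ has image of $\qz$-dimension $dr$, while the ambient dimension passes from $n$ to $dn$, whence $\rho(\tilde a)=\rho(a)$. Therefore $\tilde\phi\colon E\to GL_{dn}(\qz)$ satisfies the identical two conditions, yielding the rational form of the algebraic characterization: for every finite $E$ and every $\vp>0$ there are $N\in\nz$ and $\psi\colon E\to GL_N(\qz)$ with $\rho(\psi(g)\psi(h)-\psi(gh))<\vp$ for $g,h,gh\in E$ and $\rho(1-\psi(g))>\frac14-\vp$ for $e\neq g\in E$.

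Next I would assemble the embedding. As $G$ is countable, enumerate $G\setminus\{e\}=\{g_1,g_2,\dots\}$, put $E_k=\{e,g_1,\dots,g_k\}$ and $\vp_k=1/k$, and apply the rational characterization to obtain $N_k$ and $\psi_k\colon E_k\to GL_{N_k}(\qz)$ with $\psi_k(e)=Id$. Replacing $\psi_k$ by $\psi_k\otimes Id_{m_k}$ when necessary leaves both the multiplicative defect $\rho(\psi_k(g)\psi_k(h)-\psi_k(gh))$ and the value $\rho(1-\psi_k(g))$ unchanged while multiplying the dimension by $m_k$, so I may assume $N_k\to\infty$. Define $\Theta\colon G\to\Pi_{k\to\omega}GL_{N_k}(\qz)/d_\omega$ by $\Theta(g)=\Pi_{k\to\omega}\psi_k(g)/d_\omega$, setting $\psi_k(g):=Id$ for the finitely many $k$ with $g\notin E_k$. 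The first condition forces $d_\omega(\Theta(g)\Theta(h),\Theta(gh))=\lim_{k\to\omega}\rho(\psi_k(g)\psi_k(h)-\psi_k(gh))=0$, so $\Theta$ is a group morphism, and the second forces $d_\omega(\Theta(g),Id)=\lim_{k\to\omega}\rho(1-\psi_k(g))\geqslant\frac14$ for $g\neq e$, so $\Theta$ is injective.

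The genuinely hard analytic ingredient, the passage from $\rz$ to $\overline\qz$ through the Positivstellensatz, is already done in the preceding results, so what remains here is comparatively routine. I expect the only delicate point to be the rank bookkeeping in the descent: one must verify that viewing a matrix over a degree-$d$ extension as a $\qz$-linear transformation scales both rank and ambient dimension by $d$, so that the normalized rank---on which both approximation conditions rest---is preserved exactly rather than merely up to a constant. The secondary technical matter is ensuring $N_k\to\infty$, handled by the tensoring trick above.
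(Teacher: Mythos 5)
Your proposal is correct and follows essentially the same route as the paper: invoke the preceding $\overline\qz$-characterization, observe that the finitely many entries generate a finite extension $K/\qz$, and descend by restriction of scalars as in Observation~\ref{vectorspaces}, noting that normalized rank is preserved since both the rank and the ambient dimension scale by $[K:\qz]$. The final diagonal assembly you spell out is left implicit in the paper (it is the standard passage from the algebraic characterization back to the ultraproduct), but it is the intended argument.
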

\begin{proof}
Fix a finite subset $E\subset G$ and $\vp>0$. By the previous
proposition, we obtain  a map $\phi\colon E\to GL_n(\overline\qz)$,
satisfying the algebraic definition of linear soficity. We 
replace $\overline\qz$ by $F$, the field generated by the $n^2|E|$
entries of the matrices in $\phi(E)$. Being a finitely generated
algebraic extension over $\qz$, the field $F$ is also a vector space over
$\qz$ of finite dimension. Then, we proceed as in
Observation \ref{vectorspaces}  and we get a required function $\psi\colon E\to
GL_n(\qz)$ having the same properties as in the algebraic definition.
\end{proof}

\section{Linear sofic groups and algebras}\label{sec:lin}

This section is devoted to proving Theorem \ref{mainresult} that a
group $G$ is linear sofic if and only if $\cz G$ is a linear sofic
algebra. While the ``if" part follows directly from Proposition~\ref{invertible in ultraproduct}, the ``only if" part is much more
involved.
\begin{nt}
If $\Theta\colon G\to\Pi_{k\to\omega} GL_{n_k}(F)/d_\omega$ is a group
morphism we denote by $\widetilde\Theta$ its extension to the
group algebra:
\[\widetilde\Theta\colon F(G)\to\Pi_{k\to\omega} M_{n_k}(F)/Ker \rho_\omega,\
\widetilde\Theta(\sum a_iu_{g_i}):=\sum a_i\Theta(g_i),\] where
$a_i\in F$, $g_i\in G$ and $u_{g_i}$ is the element in the group
algebra corresponding to $g_i$.
\end{nt}

\begin{ex}
If $\Theta$ is injective on $G$ it does not follow that
$\widetilde\Theta$ is injective on $F(G)$. As an easy example consider
$\Theta\colon \zz\to\Pi_{k\to\omega} GL_{k}(\rz)/d_\omega$,
$\Theta(i)=2^iId$, for $i\in\zz$. Then for $u_1-2u_0\in \rz(\zz)$
we have $\widetilde\Theta(u_1-2u_0)=2Id-2Id=0$.
\end{ex}

The proof relies on the direct sum and tensor product of elements
in ultraproduct of matrices. Here are variants of (4) and (5) of
Proposition \ref{prop of rk} extended to ultraproducts.

\begin{p}\label{prof of rk ultra}
Let $u=(u_k)_k\in\Pi_{k\to\omega} M_{n_k}(F)/Ker\rho_\omega$ and
$v=(v_k)_k\in \Pi_{k\to\omega} M_{m_k}(F)/Ker\rho_\omega$. Then:
\begin{align*}
&u\oplus v=(u_k\oplus v_k)_k\in \Pi_{k\to\omega}
M_{n_k+m_k}(F)/Ker\rho_\omega &; \rho_\omega(u\oplus
v)=&\frac{n_k\rho_\omega(u)+m_k\rho_\omega(v)}{n_k+m_k};\\
&u\otimes v=(u_k\otimes v_k)_k\in \Pi_{k\to\omega}
M_{n_km_k}(F)/Ker\rho_\omega&;\rho_\omega(u\otimes
v)=&\rho_\omega(u)\cdot\rho_\omega(v).
\end{align*}
\end{p}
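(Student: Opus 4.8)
The plan is to prove Proposition~\ref{prof of rk ultra} by lifting the finite-dimensional identities of Proposition~\ref{prop of rk}, parts (4) and (5), to the ultraproduct level, being careful that the representatives $(u_k)_k$ and $(v_k)_k$ lie in Cartesian products over \emph{different} dimension sequences $(n_k)_k$ and $(m_k)_k$. First I would address well-definedness: I must check that $(u_k\oplus v_k)_k$ and $(u_k\otimes v_k)_k$ are genuine elements of the stated ultraproducts, i.e.\ that they do not depend on the choice of representatives modulo $Ker\rho_\omega$. For the direct sum, if $(u_k')_k$ is another representative of $u$, then $\rho(u_k-u_k')\to_\omega 0$, and by part~(4) applied to $(u_k-u_k')\oplus(v_k-v_k')$ one gets $\rho((u_k\oplus v_k)-(u_k'\oplus v_k'))=\frac{n_k\rho(u_k-u_k')+m_k\rho(v_k-v_k')}{n_k+m_k}\to_\omega 0$. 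An analogous computation using part~(5) and the subadditivity in part~(2) handles the tensor product, since $u_k\otimes v_k-u_k'\otimes v_k'=(u_k-u_k')\otimes v_k+u_k'\otimes(v_k-v_k')$ forces the rank defect to vanish in the limit.

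Once well-definedness is settled, the two rank formulas follow by taking $\lim_{k\to\omega}$ of the exact finite-dimensional identities. For the direct sum, part~(4) gives $rk(u_k\oplus v_k)=rk(u_k)+rk(v_k)$ for every $k$; dividing by $n_k+m_k$ yields
\[
\rho(u_k\oplus v_k)=\frac{n_k\rho(u_k)+m_k\rho(v_k)}{n_k+m_k},
\]
and passing to the ultralimit gives the claimed expression for $\rho_\omega(u\oplus v)$. For the tensor product, part~(5) gives $rk(u_k\otimes v_k)=rk(u_k)\cdot rk(v_k)$, hence $\rho(u_k\otimes v_k)=\rho(u_k)\rho(v_k)$ for each $k$; since the ultralimit of a product of bounded sequences is the product of the ultralimits, I obtain $\rho_\omega(u\otimes v)=\rho_\omega(u)\cdot\rho_\omega(v)$.

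The main subtlety, and the step I would flag as requiring care, is the written formula for $\rho_\omega(u\oplus v)$: as stated it contains the index $k$ on the right-hand side, which is slightly informal. The honest statement is that $\rho_\omega(u\oplus v)=\lim_{k\to\omega}\tfrac{n_k\rho(u_k)+m_k\rho(v_k)}{n_k+m_k}$, and in general this ultralimit need not simplify to $\tfrac{\lim n_k\rho_\omega(u)+\lim m_k\rho_\omega(v)}{\lim(n_k+m_k)}$ unless the ratio $n_k/(n_k+m_k)$ itself converges along $\omega$. I would therefore either phrase the conclusion as this ultralimit, or note that when $n_k/(n_k+m_k)\to_\omega t$ the value becomes $t\,\rho_\omega(u)+(1-t)\,\rho_\omega(v)$; this is precisely the convex-combination behaviour of rank under direct sums that is used later. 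No new ideas beyond Proposition~\ref{prop of rk} and elementary properties of ultralimits are needed, so beyond this bookkeeping the argument is routine.
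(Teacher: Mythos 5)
Your proof is correct and matches the paper's (implicit) justification: the paper states this proposition without proof, presenting it as the routine lift of parts (4) and (5) of Proposition \ref{prop of rk} to ultralimits, which is exactly what you carry out, including the necessary well-definedness check and the correct reading of the direct-sum formula as the ultralimit $\lim_{k\to\omega}\frac{n_k\rho(u_k)+m_k\rho(v_k)}{n_k+m_k}$. No gaps.
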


\begin{te}\label{grouptoalgebra}
Let $\Theta\colon G\to\Pi_{k\to\omega} GL_{n_k}(F)/d_\omega$ be an
injective group morphism. Then there exists an injective algebra
morphism $\Psi\colon F(G)\to \Pi_{k\to\omega}
M_{m_k}(F)/Ker\rho_\omega$.
\end{te}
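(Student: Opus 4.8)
The plan is to deduce the injective $\Psi$ from a single quantitative statement about one algebra element at a time, and then to glue these together with a product ultrafilter. The key reduction is that for each tensor power $t$ the assignment $g\mapsto\theta^k(g)^{\otimes t}$ is again a group morphism into $GL_{n_k^{t}}(F)$, where $\theta^k(g)\in GL_{n_k}(F)$ are representatives of $\Theta(g)$; hence its linear extension $\widetilde{\Theta^{\otimes t}}$ is a genuine algebra morphism of $F(G)$ into $\Pi_{k\to\omega}M_{n_k^{t}}(F)/Ker\rho_\omega$, sending $x=\sum_i a_iu_{g_i}$ to $\sum_i a_i\,\theta^k(g_i)^{\otimes t}$. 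I would first prove the core estimate: for every nonzero $x\in F(G)$ there is a tensor power $t$, bounded in terms of the size of $\mathrm{supp}(x)$, such that $\rho_\omega\big(\widetilde{\Theta^{\otimes t}}(x)\big)$ is bounded below by a positive constant depending only on $x$. Granting this, $\Psi$ is assembled as a direct sum of the $\widetilde{\Theta^{\otimes t}}$, the multiplicities being chosen (via Proposition~\ref{prof of rk ultra}) so that each fixed element keeps a definite proportion of the rank irrespectively of how many powers are included; flattening the resulting iterated ultralimit by the product-ultrafilter construction (Proposition~\ref{twolimits} and its corollary) then produces a single algebra morphism $\Psi\colon F(G)\to\Pi_{k\to\omega}M_{m_k}(F)/Ker\rho_\omega$ which is injective because every nonzero $x$ survives with $\rho_\omega(\Psi(x))>0$.

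For the core estimate I would exploit invertibility to normalise. Since the image of any $u_{g_1}$ under a tensor-power morphism is invertible and multiplication by an invertible element preserves $\rho_\omega$, I may replace $x$ by $u_{g_1}^{-1}x$ and thus assume $x=a_1\cdot 1+\sum_{i\geqslant 2}a_iu_{h_i}$ with all $h_i\neq e$ and pairwise distinct. Then $\rho_\omega\big(\widetilde{\Theta^{\otimes t}}(x)\big)$ equals $1$ minus the normalised dimension of the eigenspace of $C_t=\sum_{i\geqslant 2}a_i\,\theta^k(h_i)^{\otimes t}$ for the eigenvalue $-a_1$, so the task is to keep the geometric multiplicity of $-a_1$ in $C_t$ bounded away from the full dimension for some $t$. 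In the commuting (indeed scalar) model, where $\theta^k(h_i)=w_i\,\mathrm{Id}$, this reduces to finding $t$ with $a_1+\sum_{i\geqslant 2}a_iw_i^{t}\neq 0$; since the injectivity of $\Theta$ forces the $w_i$ (together with $1$) to be pairwise distinct for $\omega$-many $k$, a Vandermonde argument supplies such a $t\leqslant|\mathrm{supp}(x)|$ and even gives full rank in that block.

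The hard part will be the non-commutative and unipotent phenomena, which is exactly where the rank metric differs sharply from the sofic and hyperlinear cases. The fundamental difficulty is the absence of a trace compatible with $\rho_\omega$: rank-null sequences may carry large normalised trace, so the usual route of certifying $\Psi(x)\neq 0$ through a tracial inner product is unavailable, and one must argue purely in terms of rank and eigenvalue multiplicities. Two obstacles then arise. First, $C_t$ is a \emph{sum} of tensor powers and not the tensor power of a single matrix, so the single-matrix analysis of Section~\ref{sec:rampl} does not apply verbatim, and the $\theta^k(h_i)$ need not commute, ruling out simultaneous diagonalisation. Second, the eigenvalue arguments degenerate precisely on the locus where the relevant spectral data collapse to $1$, i.e.\ on the unipotent part. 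I expect to resolve these exactly along the lines of Section~\ref{sec:rampl}: separate the eigenvalue-$1$ locus from its complement, run the Vandermonde separation in the semisimple directions, and control the unipotent directions by means of the tensor product formula for Jordan blocks together with Propositions~\ref{technical result} and~\ref{M1=1 technical result}, whose iteration pushes the offending multiplicities below any prescribed threshold. Once $\Psi$ is injective, the stable finiteness of the target (Proposition~\ref{stably finite}) yields, in the sofic specialisation over an arbitrary field, the promised alternative proof of Kaplansky's direct finiteness conjecture.
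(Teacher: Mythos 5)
Your overall architecture (tensor powers $\widetilde{\Theta^{\otimes t}}$, a weighted direct sum over $t$, gluing via an ultraproduct) is exactly the paper's, and the reduction to the claim that for each nonzero $x$ some tensor power satisfies $\rho_\omega\bigl(\widetilde{\Theta^{\otimes t}}(x)\bigr)>0$ is the right one. The gap is in your proof of that core claim. You correctly observe that $C_t=\sum_i a_i\,\theta^k(h_i)^{\otimes t}$ is a \emph{sum} of tensor powers of distinct, generally non-commuting matrices, so that neither simultaneous diagonalisation nor the single-matrix analysis of Section~\ref{sec:rampl} applies; but you then propose to resolve the hard case ``along the lines of Section~\ref{sec:rampl}''. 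Propositions~\ref{technical result} and~\ref{M1=1 technical result} control $M_1(A_m)$ and $J(A_m)$ for iterated tensor squares of a \emph{single} matrix $A$; they give no handle on the kernel of a linear combination $\sum_i a_iB_i^{\otimes t}$, and there is no evident way to bound the geometric multiplicity of $-a_1$ in $C_t$ from spectral data of the individual $\theta^k(h_i)$ when these do not commute. As written, the decisive case is not proved but only hoped for.

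The missing idea is an elimination argument that bypasses spectral theory entirely and rests on Proposition~\ref{prof of rk ultra}: in the ultraproduct, $\rho_\omega(u\otimes v)=\rho_\omega(u)\cdot\rho_\omega(v)$, so a tensor product vanishes only if one of its factors does. Writing $x=\sum_{i=1}^r a_iu_{g_i}$ and supposing $\widetilde{\Theta^{\otimes t}}(x)=0$ for $t=1,\dots,r$, tensor the level-$t$ equation by $\Theta(u_{g_r})$ in a fresh slot and subtract it from the level-$(t+1)$ equation; this kills the $i=r$ term and replaces each remaining $\Theta(u_{g_i})^{\otimes(t+1)}$ by $\Theta(u_{g_i})^{\otimes t}\otimes\bigl(\Theta(u_{g_i})-\Theta(u_{g_r})\bigr)$. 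Iterating (inserting the new factors in intermediate tensor slots) telescopes the system down to
\[
a_1\,\Theta(u_{g_1})\otimes\bigl(\Theta(u_{g_1})-\Theta(u_{g_2})\bigr)\otimes\cdots\otimes\bigl(\Theta(u_{g_1})-\Theta(u_{g_r})\bigr)=0,
\]
and multiplicativity of $\rho_\omega$ together with invertibility of $\Theta(u_{g_1})$ forces $\Theta(u_{g_1})=\Theta(u_{g_j})$ for some $j$, contradicting injectivity of $\Theta$. This is the non-commutative Vandermonde step your scalar model gestures at; once you have it, no separation into semisimple and unipotent parts is needed and the Jordan-block machinery plays no role in this theorem. (Your requested quantitative lower bound is also unnecessary: positivity of a single term already makes $\sum_t 2^{-t}\rho_\omega\bigl(\widetilde{\Theta^{\otimes t}}(x)\bigr)$ positive, which is all the weighted direct sum needs.)
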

\begin{proof}
Let $\theta_k\colon G\to GL_{n_k}(F)$ be some functions such that
$\Theta=\Pi_{k\to\omega} \theta_k/d_\omega$. Then
$\Theta\otimes\Theta\colon G\to\Pi_{k\to\omega} GL_{n_k^2}(F)/d_\omega$,
defined by $\Theta\otimes\Theta(g)=\Pi
\theta_k(g)\otimes\theta_k(g)/d_\omega$ is a linear sofic
representation of $G$. For every $i\in\nz$ define a map
\[\theta_k^i\colon G\to\ GL_{n_k^i}(F),\ \
\theta_k^i(g):=\theta_k(g)\otimes\ldots\otimes\theta_k(g)\mbox{ (i
times tensor product)},\] and set
$\Theta^i=\Pi_{k\to\omega}\theta_k^i/d_\omega$. For $m\geqslant i$
define
\[\theta_k^{i,m}\colon G\to GL_{n_k^m}(F),\ \
\theta_k^{i,m}(g):=\theta_k^i(g)\otimes Id_{n_k^{m-i}}.\] The
meaning of this definition is to bring the first $m$
$\theta_k^i$'s into the same matrix dimension.

Now define $\phi_k\colon G\to GL_{n_k^k2^k}$ by:
\[\phi_k=(\theta_k^{1,k}\otimes Id_{2^{k-1}})\oplus(\theta_k^{2,k}\otimes
Id_{2^{k-2}})\oplus\ldots\oplus(\theta_k^{k,k}\otimes
Id_{2^0})\oplus(Id_{n_k^k}\otimes Id_{2^0})\] and set
$\Phi=\Pi_{k\to\omega}\phi_k/d_\omega$.

The reason for this definition is the relation:
\[\rho_\omega(\widetilde\Phi(f))=\sum_{i=1}^\infty\frac1{2^i}\rho_\omega(\widetilde\Theta^i(f)),\]
for any $f\in F(G)$. Before proving this equality let us state our
crucial claim.

\begin{claim}
$\widetilde\Phi\colon F(G)\to\Pi_{k\to\omega}
M_{n_k^k2^k}(F)/Ker\rho_\omega$ is injective.
\end{claim}

We now prove the stated relation:
\begin{align*}
\rho_\omega(\widetilde\Phi(f))=&\lim_{k\to\omega}\frac{rk(\tilde\phi_k(f))}{n_k^k2^k}=
\lim_{k\to\omega}\frac1{n_k^k2^k}\sum_{i=1}^krk\big(\tilde\theta_k^{i,k}\otimes
I_{2^{k-i}}(f)\big)\\ =&\lim_{k\to\omega}\sum_{i=1}^k
\frac1{2^in_k^k}rk\big(\tilde\theta_k^{i,k}(f)\big)=\lim_{k\to\omega}\sum_{i=1}^k
\frac1{2^in_k^i}rk\big(\tilde\theta_k^i(f)\big)\\
=&\sum_{i=1}^\infty
\frac1{2^i}\rho_\omega\big(\widetilde\Theta^i(f)\big)
\end{align*}

Assume now that $f\in F(G)$ such that
$\rho_\omega(\widetilde\Phi(f))=0$. Then
$\rho_\omega(\widetilde\Theta^i(f))=0$ for any $i$.

In order to present our injectivity argument in a transparent way
we shall assume that $f=a_1u_1+a_2u_2+a_3u_3$, where $a_i$ are
nonzero elements of $F$ and $u_i$ are invertible elements in the
group algebra corresponding to distinct elements in the group $G$.
We know that:
\begin{align}
&a_1\Theta(u_1)+a_2\Theta(u_2)+a_3\Theta(u_3)=0\\
&a_1\Theta(u_1)\otimes\Theta(u_1)+a_2\Theta(u_2)\otimes\Theta(u_2)+
a_3\Theta(u_3)\otimes\Theta(u_3)=0
\end{align}
\begin{align}
a_1\Theta(u_1)\otimes\Theta(u_1)\otimes\Theta(u_1)+
a_2\Theta(u_2)\otimes\Theta(u_2)\otimes\Theta(u_2)+
a_3\Theta(u_3)\otimes\Theta(u_2)\otimes\Theta(u_3)=0
\end{align}
Amplifying the first equation by $\Theta(u_3)$ and subtracting it
from the second we get:
\begin{align}
&a_1\Theta(u_1)\otimes(\Theta(u_1)-\Theta(u_3))+a_2\Theta(u_2)\otimes(\Theta(u_2)-\Theta(u_3))=0
\end{align}
Applying the same operation to equations $(2)$ and $(3)$, we get:
\begin{align}
&a_1\Theta(u_1)\otimes\Theta(u_1)\otimes(\Theta(u_1)-\Theta(u_3))+
a_2\Theta(u_2)\otimes\Theta(u_2)\otimes(\Theta(u_2)-\Theta(u_3))=0
\end{align}
Now we amplify equation $(4)$ with $\Theta(u_2)$ between the
already existing tensor product, and subtract it from equation
$(5)$ to get:
\begin{align}
&a_1\Theta(u_1)\otimes(\Theta(u_1)-\Theta(u_2))\otimes(\Theta(u_1)-\Theta(u_3))=0
\end{align}
As $a_1\neq 0$ and $\Theta(u_1)$ is invertible, we get that
$\Theta(u_1)=\Theta(u_2)$ or $\Theta(u_1)=\Theta(u_3)$. This
contradicts the injectivity of $\Theta$. This procedure applies to
any $f$ with an arbitrary large (finite) support.
\end{proof}

The key of the proof is the construction of the representation
$\widetilde\Phi$ out of a sequence of maps $\widetilde\Theta^i$ such that
$\widetilde\Phi(x)=0$ if and only if $\widetilde\Theta^i(x)=0$ for all $i$. This is a
construction that can be performed in general and we record it
here for a  later use.

\begin{p}\label{sum of representations}
Let $\{\Theta^i\}_i$, $\Theta^i\colon A\to\Pi_{k\to\omega}
M_{n_{i,k}}/Ker \rho_\omega$ be a sequence of morphisms of an
algebra $A$. Then there exists a morphism $\Phi$  of $A$ such that
$\rho_\omega(\Phi(x))=\sum_{i=1}^\infty\frac1{2^i}\rho_\omega(\Theta^i(x))$
for any $x\in A$. In particular, $\Phi(x)=0$ if and only if $\Theta^i(x)=0$
for all $i$. Moreover, if $\{\Theta^i\}_i$ are unital morphisms,
then $\Phi$ can be taken unital.
\end{p}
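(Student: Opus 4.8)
The plan is to imitate the weighted direct sum from the proof of Theorem \ref{grouptoalgebra}, now carried out over an arbitrary double-indexed family of dimensions rather than over powers of a single $n_k$. First I would choose, for each $i$, maps $\theta^i_k\colon A\to M_{n_{i,k}}(F)$ with $\Theta^i=\Pi_{k\to\omega}\theta^i_k/Ker\rho_\omega$; these representatives need not be morphisms, but since each $\Theta^i$ is, the defects $\rho(\theta^i_k(xy)-\theta^i_k(x)\theta^i_k(y))$ (and the additive and scalar analogues) tend to $0$ along $\omega$ for each fixed $i$ and $x,y\in A$. For each $k$ put $L_k=\mathrm{lcm}(n_{1,k},\ldots,n_{k,k})$ and define
\[
\phi_k=\bigoplus_{i=1}^k\big(\theta^i_k\otimes Id_{2^{k-i}L_k/n_{i,k}}\big)\colon A\to M_{D_k}(F),\qquad D_k=(2^k-1)L_k,
\]
the multiplicity $2^{k-i}L_k/n_{i,k}$ being an integer because $n_{i,k}\mid L_k$. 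The $i$-th summand then has matrix dimension exactly $2^{k-i}L_k$, so by the direct-sum formula of Proposition \ref{prof of rk ultra} its relative weight inside $\phi_k$ is $2^{k-i}/(2^k-1)=2^{-i}/(1-2^{-k})$, which tends to $2^{-i}$ as $k\to\infty$. Since $D_k\to\infty$, setting $\Phi=\Pi_{k\to\omega}\phi_k/Ker\rho_\omega$ yields a well-defined element of $\Pi_{k\to\omega}M_{D_k}(F)/Ker\rho_\omega$.

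The heart of the argument is to pass the infinite sum through the ultralimit, and this is where the summability of the weights $2^{-i}$ does the work. Tensoring with an identity leaves the normalized rank unchanged, so for every $x\in A$ one has $\rho(\phi_k(x))=\sum_{i=1}^k\frac{2^{-i}}{1-2^{-k}}\rho(\theta^i_k(x))$. Given $\vp>0$, I would pick $N$ with $\sum_{i>N}2^{-i}<\vp$; the tail of this sum is then bounded by $\vp$ \emph{uniformly in} $k$ because each $\rho\leqslant 1$, while the finitely many leading terms converge along $\omega$ to $2^{-i}\rho_\omega(\Theta^i(x))$. An $\vp/2$ split gives $\rho_\omega(\Phi(x))=\sum_{i=1}^\infty 2^{-i}\rho_\omega(\Theta^i(x))$. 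The same uniform-tail estimate applied to
\[
\phi_k(xy)-\phi_k(x)\phi_k(y)=\bigoplus_{i=1}^k\big(\theta^i_k(xy)-\theta^i_k(x)\theta^i_k(y)\big)\otimes Id_{2^{k-i}L_k/n_{i,k}}
\]
shows $\rho_\omega(\Phi(xy)-\Phi(x)\Phi(y))=\sum_{i=1}^\infty 2^{-i}\lim_{k\to\omega}\rho(\theta^i_k(xy)-\theta^i_k(x)\theta^i_k(y))=0$; the additive and scalar defects are estimated identically, so $\Phi$ is an algebra morphism. I expect this interchange of the infinite sum with the ultralimit to be the only genuine obstacle, and it is resolved precisely by the geometric decay of the weights, which dominates the tail uniformly in $k$.

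Finally, the ``in particular'' clause is immediate: since every summand $2^{-i}\rho_\omega(\Theta^i(x))$ is nonnegative, $\rho_\omega(\Phi(x))=0$ forces $\rho_\omega(\Theta^i(x))=0$, i.e. $\Theta^i(x)=0$, for all $i$, and the converse is clear. For the unital refinement, if each $\Theta^i$ is unital then $\rho(\theta^i_k(1_A)-Id)\to 0$ along $\omega$, and applying the identical tail estimate to $\phi_k(1_A)-Id=\bigoplus_{i=1}^k(\theta^i_k(1_A)-Id)\otimes Id_{2^{k-i}L_k/n_{i,k}}$ gives $\rho_\omega(\Phi(1_A)-Id)=0$, whence $\Phi(1_A)=Id$ and $\Phi$ is unital.
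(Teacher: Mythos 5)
Your proof is correct and follows essentially the same route as the paper: the paper records this proposition as the general form of the construction in Theorem \ref{grouptoalgebra} (and repeats it in the proof that $SR(A)=0$ implies linear soficity), namely amplify each $\Theta^i$ to a common dimension sequence and take the direct sum weighted by $2^{-i}$, with the interchange of the infinite sum and the ultralimit justified by the uniform geometric tail bound exactly as you argue. The only cosmetic differences are that the paper uses the product $n_{k,1}\cdots n_{k,k}$ rather than the least common multiple to reach a common dimension, and appends a trailing identity block of weight $2^{-k}$ so that the weights are exactly $2^{-i}$ rather than $2^{-i}/(1-2^{-k})$; neither affects the limit.
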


\begin{cor}
A group $G$ is linear sofic if and only if $\cz G$ is a linear
sofic algebra.
\end{cor}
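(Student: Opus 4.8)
The plan is to read the statement as two separate implications and dispatch each with the machinery already assembled, since the two directions rest on genuinely different tools. The biconditional is exactly Theorem \ref{mainresult}, so no new construction is needed at this stage; everything reduces to combining Proposition \ref{invertible in ultraproduct} with Theorem \ref{grouptoalgebra}.

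For the ``if'' direction, assume $\cz G$ is a linear sofic algebra, so there is an injective unital morphism $\Psi\colon\cz G\to\Pi_{k\to\omega}M_{n_k}(\cz)/Ker\rho_\omega$. First I would observe that for every $g\in G$ the generator $u_g$ is invertible in the group algebra, with inverse $u_{g^{-1}}$, so its image $\Psi(u_g)$ is an invertible element of the ultraproduct algebra. By Proposition \ref{invertible in ultraproduct} the group of invertibles of $\Pi_{k\to\omega}M_{n_k}(\cz)/Ker\rho_\omega$ is canonically identified with $\Pi_{k\to\omega}GL_{n_k}(\cz)/d_\omega$. Hence $\Theta(g):=\Psi(u_g)$ defines a group morphism $\Theta\colon G\to\Pi_{k\to\omega}GL_{n_k}(\cz)/d_\omega$, and its injectivity is immediate from that of $\Psi$ (if $\Psi(u_g)=\Psi(u_h)$ then $\Psi(u_g-u_h)=0$, forcing $u_g=u_h$, i.e.\ $g=h$). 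This is a linear sofic representation of $G$.

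For the ``only if'' direction, assume $G$ is linear sofic, with an injective morphism $\Theta\colon G\to\Pi_{k\to\omega}GL_{n_k}(\cz)/d_\omega$. Here I would simply invoke Theorem \ref{grouptoalgebra} with $F=\cz$, which produces an injective algebra morphism $\Psi\colon\cz G\to\Pi_{k\to\omega}M_{m_k}(\cz)/Ker\rho_\omega$. The one point to record is that $\Psi$ is unital: it sends $u_e$ to $\Theta(e)=Id$, so it respects units, as required by Definition \ref{deflinearsofic}. This exhibits $\cz G$ as a linear sofic algebra and completes the equivalence.

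The substantive difficulty is entirely absorbed into the two results cited: the amplification and tensor-power argument behind Theorem \ref{grouptoalgebra} for the group-to-algebra passage, and the normalized-rank perturbation underlying Proposition \ref{invertible in ultraproduct} for the algebra-to-group passage. Once these are in hand, the corollary is a formal consequence, and the only remaining verifications---that injectivity persists when restricting $\Psi$ to the set $\{u_g\}$, and that unitality holds in the reverse direction---are routine bookkeeping. I therefore expect no genuine obstacle in assembling the corollary itself.
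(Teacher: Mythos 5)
Your proposal is correct and follows exactly the paper's own route: the ``only if'' direction is Theorem \ref{grouptoalgebra} applied with $F=\cz$, and the ``if'' direction restricts the algebra morphism to the invertibles $\{u_g\}$ via Proposition \ref{invertible in ultraproduct}. The extra details you record (injectivity of the restriction, unitality) are the same routine verifications the paper leaves implicit.
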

\begin{proof}
The direct implication is the previous theorem for $F=\cz$. The
reverse implication immediately follows from Proposition
\ref{invertible in ultraproduct}.
\end{proof}

Our previous theorem also provides a new proof of the result of
Elek and Szabo~\cite{El-Sz1}.

\begin{cor}\label{sofkap}
Sofic groups satisfy Kaplansky's direct finiteness conjecture.
\end{cor}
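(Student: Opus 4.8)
The plan is to run the reductions already assembled in the paper---the embedding of the group algebra into a rank ultraproduct (Theorem~\ref{grouptoalgebra}) together with the stable finiteness of that ultraproduct (Proposition~\ref{stably finite})---while checking that the entire chain survives the passage to an \emph{arbitrary} field $F$, as demanded by the statement of the conjecture. So I would fix a countable sofic group $G$ and an arbitrary field $F$, the goal being to show that $F(G)$ is directly finite.

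First I would produce an injective group morphism $\Theta\colon G\to\Pi_{k\to\omega}GL_{n_k}(F)/d_\omega$. Starting from a sofic representation $\Phi\colon G\to\Pi_{k\to\omega}(S_{n_k},d_{Hamm})$, I would rerun the argument of Proposition~\ref{sofic implies lsofic}, but with the permutation matrices $A_p$ taken over $F$ rather than over $\cz$. The only field-sensitive ingredient there is the identity $rk(Id-A_p)=n-cyc(p)$, and this holds over every field: the kernel of $Id$ minus the cyclic permutation matrix of an $\ell$-cycle is spanned by the all-ones vector, hence one-dimensional irrespective of the characteristic, so summing over cycles gives $rk(Id-A_p)=n-cyc(p)$. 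Consequently both inequalities $\rho(Id-A_p)\leqslant d_{Hamm}(Id,p)$ and $d_{Hamm}(Id,p)\leqslant 2\rho(Id-A_p)$ remain valid over $F$, the first making $\Theta$ well-defined and the second making it injective.

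Next I would apply Theorem~\ref{grouptoalgebra} to this $\Theta$, obtaining an injective algebra morphism $\Psi\colon F(G)\to\Pi_{k\to\omega}M_{m_k}(F)/Ker\rho_\omega$, which is unital by construction (the identity $u_e$ is sent to the class of the identity matrices, the unit of the ultraproduct). Now suppose $x,y\in F(G)$ satisfy $xy=u_e$. Applying $\Psi$ yields $\Psi(x)\Psi(y)=1$ in the ultraproduct algebra. By Proposition~\ref{stably finite} this algebra is stably finite, in particular directly finite, so $\Psi(y)\Psi(x)=1=\Psi(u_e)$, that is $\Psi(yx)=\Psi(u_e)$; injectivity of $\Psi$ then forces $yx=u_e$. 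Hence $F(G)$ is directly finite, and since $F$ and $G$ were arbitrary, every sofic group satisfies Kaplansky's direct finiteness conjecture.

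The genuinely delicate analytic work is already packaged in Theorem~\ref{grouptoalgebra} (the amplified tensor-power construction producing an injective morphism on the group algebra) and in Proposition~\ref{stably finite}, both of which I may invoke. Thus the only point that needs real care here is the passage to an arbitrary field, and I expect this to reduce entirely to the elementary, characteristic-independent rank computation for $Id-A_p$ noted above; everything else is a formal consequence of injectivity together with direct finiteness of the target.
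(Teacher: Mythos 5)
Your proposal is correct and follows exactly the paper's own route: transport the sofic representation to an injective morphism $\Theta\colon G\to\Pi_{k\to\omega}GL_{n_k}(F)/d_\omega$ by rerunning Proposition~\ref{sofic implies lsofic} over $F$, then apply Theorem~\ref{grouptoalgebra} and conclude via the stable finiteness of the rank ultraproduct (Proposition~\ref{stably finite}). Your explicit check that $rk(Id-A_p)=n-cyc(p)$ is characteristic-independent is a welcome elaboration of the paper's terse ``same arguments'' remark, but it is not a different argument.
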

\begin{proof}
Let $F$ be a field and $G$ be a sofic group. Same arguments as in
Proposition~\ref{sofic implies lsofic} show that there exists an injective
group morphism
$\Theta\colon G\to\Pi_{k\to\omega} GL_{n_k}(F)/d_\omega.$  The previous theorem provides an injective algebra
homomorphism $\Psi:F(G)\to\Pi_{k\to\omega} M_{m_k}(F)/
Ker\rho_\omega$. However, $\Pi_{k\to\omega} M_{m_k}(F)/Ker\rho_\omega$
is stably finite by Proposition~\ref{stably finite}. Thus, $F(G)$ is
actually stably finite in this case.
\end{proof}

\begin{q}\label{linsofkap} Do linear sofic groups satisfy Kaplansky's direct finiteness conjecture?
\end{q}

See also comments following Question \ref{q:oneF}  below.

\section{Linear sofic implies weakly sofic}\label{sec:wsofic}

Here we prove that a linear sofic group is weakly sofic. The proof
is an adaptation of the proof of Malcev's theorem\footnote{Malcev proves that every finitely generated subgroup of the linear group $GL_n(F)$ is residually finite.} presented in
 \cite[Theorem 1.4]{Pe-Kw}. Let us recall the
definition of weakly sofic group.

\begin{de}(c.f. \cite[Definition 4.1]{Gl-Ri})
A group $G$ is \emph{weakly sofic} if it can be embedded in a
metric ultraproduct of finite groups, each equipped with a
bi-invarant metric.
\end{de}

The original definition in \cite{Gl-Ri} is algebraic and uses a constant
length function (as discussed before Proposition \ref{algebraic definition}).
It is equivalent to its ultraproduct version above by standard amplification
argument \cite{Pe}. Indeed, the direct product of finite groups is obviously finite
and one can define a bi-invariant distance on the direct product as the sum of
the bi-invariant metrics on the factors.

\begin{te}
If $G$ is a linear sofic group then there exists $(F_k)_k$ a
sequence of finite fields and an injective group morphism
$\Phi\colon G\to\Pi_{k\to\omega} GL_{n_k}(F_k)/d_\omega$.
\end{te}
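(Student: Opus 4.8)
The plan is to adapt Malcev's reduction-modulo-a-prime argument to the rank metric. Working over the rationals via Theorem~\ref{lsoficityq} and the (rational form of the) algebraic characterization in Proposition~\ref{16 algebraic definition}, I would start from the following data: for a fixed finite $E\subset G$ and $\vp>0$ there is a function $\phi\colon E\to GL_n(\qz)$ with $\rho(\phi(g)\phi(h)-\phi(gh))<\vp$ whenever $g,h,gh\in E$, and $\rho(1-\phi(g))>\frac14-\vp$ for all $g\in E$. The whole point is to transport this rank data, essentially unchanged, across a surjection onto a \emph{finite} field.

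To do so, let $R\subset\qz$ be the subring generated over $\zz$ by all entries of the matrices $\phi(g)$ together with the elements $\det(\phi(g))^{-1}$, $g\in E$. Then $R$ is a finitely generated $\zz$-algebra and an integral domain. For each $g$, condition $(2)$ provides a nonzero minor $D_g\in R$ of $1-\phi(g)$ of size exceeding $(\frac14-\vp)n$; put $d=\prod_{g\in E}D_g$, a nonzero element of $R$. The arithmetic Nullstellensatz now enters: a finitely generated $\zz$-algebra has finite residue fields at all its maximal ideals. Applying this to the nonzero finitely generated $\zz$-algebra $R[1/d]$ and choosing a maximal ideal, I obtain a finite field $F$ together with a ring homomorphism $\pi\colon R\to F$ satisfying $\pi(d)\neq 0$; in particular $\pi(D_g)\neq 0$ for every $g\in E$, and each $\pi(\det\phi(g))$ is a unit in $F$.

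Set $\psi:=\pi\circ\phi\colon E\to M_n(F)$, applying $\pi$ entrywise. Since $\det\psi(g)=\pi(\det\phi(g))\neq 0$, every $\psi(g)$ lies in $GL_n(F)$. Because $\pi$ is a ring homomorphism it commutes with minors, so reduction can only lower rank: every $(r+1)\times(r+1)$ minor that vanishes over $R$ still vanishes over $F$. Hence $rk(\psi(g)\psi(h)-\psi(gh))\leqslant rk(\phi(g)\phi(h)-\phi(gh))$, which preserves condition $(1)$, while $\pi(D_g)\neq 0$ keeps the chosen minor of $1-\psi(g)$ nonzero and thus preserves condition $(2)$. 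In this way the rank-metric algebraic conditions survive verbatim over $F$.

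Finally I would assemble these local models. Fix an increasing exhaustion $E_1\subset E_2\subset\cdots$ of $G$ by finite sets and set $\vp_k=1/k$; the above yields finite fields $F_k$, dimensions $n_k$ (enlarged if necessary by tensoring with an identity matrix, which preserves normalized ranks, so that $n_k\to\infty$), and maps $\psi_k\colon E_k\to GL_{n_k}(F_k)$. Defining $\Phi(g)=\Pi_{k\to\omega}\psi_k(g)/d_\omega$ (with $\psi_k(g)$ taken to be the identity until $g$ enters $E_k$), condition $(1)$ gives $\lim_{k\to\omega}\rho(\psi_k(g)\psi_k(h)-\psi_k(gh))=0$, so $\Phi$ is a homomorphism, and condition $(2)$ gives $d_\omega(\Phi(g),Id)\geqslant\frac14$ for $g\neq e$, so $\Phi$ is injective, as required. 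I expect the crux to be precisely the interplay between the Nullstellensatz and the rank metric: it is the fact that a ring homomorphism can only decrease rank that lets a \emph{single} nonzero witness $d$ simultaneously protect all the large minors of condition $(2)$ while condition $(1)$ is inherited for free, and this is where the argument genuinely diverges from the classical residual-finiteness proof of Malcev.
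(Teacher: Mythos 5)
Your proposal is correct and follows essentially the same route as the paper: both are Malcev-style arguments that reduce the finitely generated ring of matrix entries modulo a maximal ideal chosen to avoid a single product of nonzero minors witnessing the rank lower bounds, use the fact that a finitely generated ring which is a field is finite, and observe that condition $(1)$ survives for free because a ring homomorphism can only decrease rank. The only cosmetic difference is that the paper takes the ring $R_k$ directly inside $\cz$ (invoking the Jacobson property rather than localizing at $d$), so the preliminary passage to $\qz$ via Theorem~\ref{lsoficityq} is not needed; your explicit adjunction of $\det(\phi(g))^{-1}$ to guarantee invertibility after reduction is in fact a point the paper glosses over.
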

\begin{proof}
Let $\theta_k\colon G\to GL_{n_k}(\cz)$ be some functions such that
$\Theta=\Pi_{k\to\omega}\theta_k/d_\omega$ is an injective
homomorphism given by linear soficity of $G$. Let $G=\bigcup_kB_k$,
where $(B_k)_k$ is an increasing sequence of finite subsets of $G$
such that $B_k^{-1}=B_k$ and $e\in B_k$. Let $R_k\subset\cz$ be
the ring generated by all the entries of $\theta_k(s)$ with $s\in
B_k$. Because it is finitely generated, $R_k$ is a Jacobson ring.
We can view $\theta_k$ as a map from $B_k$ to $GL_{n_k}(R_k)$.
\begin{claim}
There exists $m_k\subset R_k$ a maximal ideal such that if we
reduce $\theta_k$ modulo $m_k$ to get the induced map
$\phi_k\colon B_k\to GL_{n_k}(R_k/m_k)$ we get:
\[rk(I-\theta_k(s))=rk(I-\phi_k(s))\ \ \forall s\in B_k.\]
\end{claim}
For $s\in B_k$ let $a_s=rk(I-\theta_k(s))$ and choose $A_s$ an
$a_s\times a_s$ submatrix of $\theta_k(s)$ such that $b_s=det
A_s\neq 0$. Let $c=\Pi_{s\in B_k}b_s$ and choose $m_k\subset R_k$
a maximal ideal such that $c\notin m_k$. Then $b_s\notin m_k$ for
any $s\in B_k$ so indeed $rk(I-\theta_k(s))=rk(I-\phi_k(s))$.

Since $m_k$ is a maximal ideal, $R_k/m_k$ is a field. It is a well-known
non-trivial fact that a finitely generated ring, that is also a field, is finite.
It follows that $R_k/m_k$ is finite.

Define $\Phi=\Pi_{k\to\omega}\phi_k/d_\omega$ and note that in
general if $s,t,st\in B_k$ then $rk(\phi_k(st)-\phi_k(s)\phi_k(t))
\leqslant rk(\theta_k(st)-\theta_k(s)\theta_k(t))$. This implies
that $\Phi$ is still a homomorphism and the claim shows that
$\Phi$ is injective.
\end{proof}

\begin{ob}
Every finite field $F$ is a finite dimensional vector space over
$\zz/p\zz$, where $p$ is the characteristic of $F$. Therefore, as in
Observation \ref{vectorspaces}, if we have an embedding
$\phi\colon G\to\Pi_{k\to\omega} GL_{n_k}(F_k)/d_\omega$ with $F_k$
finite fields, then we can construct $\psi\colon G\to\Pi_{k\to\omega}
GL_{m_k}(\zz/p_k\zz)/d_\omega$, where $(p_k)_k$ is a sequence of
prime numbers.
\end{ob}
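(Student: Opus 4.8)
The plan is to mimic Observation~\ref{vectorspaces}, replacing the extension $\cz/\rz$ by the extension $F_k/(\zz/p_k\zz)$, where $p_k$ is the characteristic of $F_k$. First I would fix, for each $k$, the prime $p_k=\mathrm{char}(F_k)$ and set $d_k:=[F_k:\zz/p_k\zz]$, which is finite since $F_k$ is a finite field. Choosing a basis realizes an isomorphism of $(\zz/p_k\zz)$-vector spaces $F_k\cong(\zz/p_k\zz)^{d_k}$, so the standard module $F_k^{n_k}$ becomes a $(\zz/p_k\zz)$-vector space of dimension $m_k:=n_k d_k$. Since $n_k\to\infty$ and $d_k\geqslant 1$, we have $m_k\to\infty$, so the target ultraproduct is well-defined.

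Next I would use restriction of scalars. An element $a\in GL_{n_k}(F_k)$ is an $F_k$-linear automorphism of $F_k^{n_k}$; forgetting the $F_k$-structure and retaining only the $(\zz/p_k\zz)$-structure yields a $(\zz/p_k\zz)$-linear automorphism $\widetilde a$ of $(\zz/p_k\zz)^{m_k}$, that is, an element of $GL_{m_k}(\zz/p_k\zz)$. The assignment $a\mapsto\widetilde a$ is a group homomorphism, because composition of $F_k$-linear maps agrees with composition of the underlying $(\zz/p_k\zz)$-linear maps and $\widetilde{a^{-1}}=(\widetilde a)^{-1}$.

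The key point, exactly as in Observation~\ref{vectorspaces}, is that the normalized rank is preserved. If an $F_k$-linear map has image of $F_k$-dimension $r$, then the same subset viewed over $\zz/p_k\zz$ has dimension $rd_k$; hence $rk(\widetilde b)=d_k\, rk(b)$ for every $b\in M_{n_k}(F_k)$, and therefore
\[\rho\big(\widetilde b\big)=\frac{d_k\, rk(b)}{n_k d_k}=\frac{rk(b)}{n_k}=\rho(b).\]
In particular the rank distance is preserved exactly under $a\mapsto\widetilde a$, including on differences such as $\phi_k(g)\phi_k(h)-\phi_k(gh)$ and $I-\phi_k(g)$.

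Finally I would assemble the maps. Writing $\phi=\Pi_{k\to\omega}\phi_k/d_\omega$ and setting $\psi_k(g):=\widetilde{\phi_k(g)}$, the exact preservation of normalized rank gives $\rho(\psi_k(g)\psi_k(h)-\psi_k(gh))=\rho(\phi_k(g)\phi_k(h)-\phi_k(gh))$, so after passing to the $\omega$-limit $\psi:=\Pi_{k\to\omega}\psi_k/d_\omega$ is a group morphism into $\Pi_{k\to\omega}GL_{m_k}(\zz/p_k\zz)/d_\omega$. Likewise $d_\omega(\psi(g),Id)=d_\omega(\phi(g),Id)$, which is positive for $g\neq e$ by injectivity of $\phi$, whence $\psi$ is injective. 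There is no real obstacle here: the only point requiring care is the identity $rk(\widetilde b)=d_k\, rk(b)$, and this is immediate from the behaviour of dimension under restriction of scalars.
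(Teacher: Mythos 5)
Your proposal is correct and follows exactly the route the paper intends: the Observation explicitly reduces to the argument of Observation~\ref{vectorspaces}, namely restriction of scalars along the finite extension $F_k/(\zz/p_k\zz)$ together with the fact that both the rank and the ambient dimension get multiplied by $d_k=[F_k:\zz/p_k\zz]$, so the normalized rank is unchanged. No discrepancy to report.
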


\begin{q}
Are all linear sofic groups indeed sofic?
\end{q}

For this question the tensor product is not a useful tool. Suppose that we have
a map $\theta\colon E\to GL_n(\cz)$ from a finite subset $E$ of a linear sofic group $G$.
We want to construct a new map from $E$ into $S_n$. As permutation matrices are diagonalizable,
we can first try to construct a map using only diagonalizable matrices.

If $A\in M_n(\cz)$ let $A=UTU^{-1}$ be the canonical Jordan decomposition of $A$. Let $Diag(T)$ be the
diagonal matrix obtained by taking only the entries on the diagonal of $T$. Then $\rho(A-UDiag(T)U^{-1})=1-J(A)$,
where $J(A)$ is the number of Jordan blocks in $A$ divided by $n$ as defined in Section \ref{sec:rampl}.
In the rank metric, this is the lower bound for $\rho(A-D)$, where $D$ is any diagonalizable matrix. This follows from Theorem 2 of \cite{Gl-Ri-ar}
or it can be checked directly. In Section \ref{sec:rampl}, we proved that $J(A\otimes A)\leqslant J(A).$ Therefore, taking the
tensor product will only increase the rank distance from $A$ to a diagonalizable matrix, not reduce it.

\begin{q}\label{q:oneF}
Let $G$ be a linear sofic group and  $F$ a
finite field. Does there exist an injective group morphism
$\Phi:G\to\Pi_{k\to\omega} GL_{n_k}(F)/d_\omega$?
\end{q}

Sofic groups have this property and this is the only property that we used
in our proof of Kaplansky's direct finiteness conjecture, Corollary \ref{sofkap}.
A positive answer to this question will immediately imply that linear sofic groups do satisfy
Kaplansky's direct finiteness conjecture. That would give a positive answer to Question \ref{linsofkap}.

\section{Permanence properties}\label{sec:perm}

Here we shall prove various permanence properties for linear sofic
groups and algebras. Due to Theorem \ref{mainresult} many
permanence properties for linear sofic algebras can be transported
to linear sofic groups.

\begin{te}
Subalgebras, direct product, inverse limits of linear sofic
algebras are linear sofic. Same permanence properties hold also
for linear sofic groups.
\end{te}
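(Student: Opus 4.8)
The plan is to prove each permanence property separately, exploiting the algebraic characterization of linear soficity together with the ultraproduct constructions already established. Throughout I would work with the algebra version and transport to groups at the end via Theorem~\ref{mainresult}; since the group algebra of a subgroup is a subalgebra of the ambient group algebra, and group-theoretic direct products and inverse limits induce the corresponding algebra operations, this transport is essentially formal once the algebra statements are in hand.

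For \emph{subalgebras} the argument is immediate: if $A$ is linear sofic via an injective morphism $\Theta\colon A\to\Pi_{k\to\omega}M_{n_k}(F)/Ker\rho_\omega$ and $B\subseteq A$ is a subalgebra, then the restriction $\Theta|_B$ is still an injective morphism into the same ultraproduct, so $B$ is linear sofic. For \emph{direct products} (and indeed countable direct products, which is what one needs), given linear sofic algebras $\{A_i\}_i$ with representations $\Theta^i\colon A_i\to\Pi_{k\to\omega}M_{n_{i,k}}/Ker\rho_\omega$, I would first compose each with the projection $\prod_j A_j\to A_i$ to obtain morphisms of the product algebra, and then invoke Proposition~\ref{sum of representations} to assemble these into a single morphism $\Phi$ of $\prod_i A_i$ satisfying $\rho_\omega(\Phi(x))=\sum_i\frac1{2^i}\rho_\omega(\Theta^i(\pi_i(x)))$. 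The point is that $\Phi(x)=0$ forces $\Theta^i(\pi_i(x))=0$ for every $i$, hence $\pi_i(x)=0$ for every $i$ by injectivity of each $\Theta^i$, hence $x=0$; this gives injectivity of $\Phi$, and by the ``In particular'' clause of Proposition~\ref{sum of representations} $\Phi$ may be taken unital when all the $\Theta^i$ are.

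For \emph{inverse limits}, write $A=\varprojlim A_i$ with linear sofic $A_i$ and bonding maps. An element of the inverse limit is a compatible sequence $(x_i)_i$, and the natural maps $\pi_i\colon A\to A_i$ are morphisms; the essential feature is that $\varprojlim A_i$ embeds into $\prod_i A_i$ compatibly, so an element is $0$ exactly when all its components $\pi_i(x)$ vanish. I would therefore again compose the $\Theta^i$ with the $\pi_i$ and apply Proposition~\ref{sum of representations} to produce an injective (unital) morphism of $A$, with injectivity following from the same separation-of-points reasoning as above. The main obstacle I anticipate is not any single construction but ensuring the bookkeeping of unitality and countable generation is uniformly handled: one must check that the assembled $\Phi$ lands in a single rank ultraproduct (which Proposition~\ref{sum of representations} guarantees, since it produces one morphism into one ultraproduct) and that passing from algebras back to groups respects the operations---here the cleanest route is to observe that $F(\prod_i G_i)$ and $F(\varprojlim G_i)$ receive the relevant morphisms and then apply Theorem~\ref{mainresult} in the direction that linear soficity of the group algebra yields linear soficity of the group.
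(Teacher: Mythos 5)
Your handling of the algebra statements coincides with the paper's proof: subalgebras by restricting the injective morphism, direct products by composing the representations $\Theta^i$ with the projections and assembling them through Proposition~\ref{sum of representations}, and inverse limits by regarding $\varprojlim A_i$ as a subalgebra of $\prod_i A_i$ (the paper literally reduces to that case; your re-running of the product argument is the same reasoning).

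The step that does not work as written is your transport back to groups for direct products and inverse limits. You propose to show that $F(\prod_i G_i)$ ``receives the relevant morphisms,'' conclude that this group algebra is linear sofic, and then apply the easy direction of Theorem~\ref{mainresult}. But the canonical map $F(\prod_i G_i)\to\prod_i F(G_i)$ induced by the projections is \emph{not} injective, so $F(\prod_i G_i)$ is not a subalgebra of the product of the group algebras: already for $G_1=G_2=\mathbb{Z}/2\mathbb{Z}=\{e,s\}$ the element $u_{(e,e)}-u_{(e,s)}-u_{(s,e)}+u_{(s,s)}$ is nonzero yet is killed by both induced projections, hence by the morphism you would obtain from Proposition~\ref{sum of representations} applied to the extensions $\widetilde{\Theta^i\circ P_i}$. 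So this route does not establish linear soficity of $F(\prod_i G_i)$ (which is in any case more than you need). The repair is immediate and is what the paper's one-line remark amounts to: the group $\prod_i G_i$ embeds into the unit group of the linear sofic algebra $\prod_i F(G_i)$, and by Proposition~\ref{invertible in ultraproduct} the units of the rank ultraproduct form the corresponding ultraproduct of general linear groups, so the group itself is linear sofic; equivalently, one can run your weighted direct-sum construction directly at the group level, composing linear sofic representations of the $G_i$ with the group projections, where injectivity follows coordinatewise. With that correction the argument is complete; subgroups and inverse limits of groups are then handled the same way.
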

\begin{proof}
It is not hard to see that a subalgebra of a linear
sofic algebra is linear sofic. Let $(A_i)_i$ be a sequence of
linear sofic algebras and let $A=\Pi_i A_i$ be its direct product.
Denote by $P_j\colon\Pi_iA_i\to A_j$ the projection to the $j$-th
component. Let $\Theta_i\colon A_i\to\Pi_{k\to\omega} M_{n_{i,k}}/Ker
\rho_\omega$ be a linear sofic representation of $A_i$. Then
$(\Theta_i\circ P_i)_i$ is a sequence of morphisms of the algebra
$A$. Using Proposition \ref{sum of representations} we construct
$\Psi\colon A\to\Pi_{k\to\omega} M_{n_k}/Ker \rho_\omega$ such that
$Ker\Psi=\bigcap_i Ker(\Theta_i\circ P_i)=0$. It follows that $A$ is
linear sofic. An inverse limit is a specific subalgebra of the
direct product.

The second part of the theorem follows immediately from Theorem \ref{mainresult}
and properties of group algebras for these constructions.
\end{proof}

\begin{te}
Direct limit of linear sofic groups is again sofic.
\end{te}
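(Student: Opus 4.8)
The plan is to prove that the direct limit of linear sofic groups is again linear sofic (the statement ``again sofic'' appears to be a typo, since the surrounding context concerns preservation of linear soficity). Let $G=\varinjlim G_i$ be a direct limit of linear sofic groups, with connecting maps $\varphi_{ij}\colon G_i\to G_j$ and canonical maps $\psi_i\colon G_i\to G$. The key structural fact I would exploit is the strengthened algebraic characterization from Proposition~\ref{16 algebraic definition}: a countable group is linear sofic if and only if every finite subset admits an approximate representation into some $GL_n(\cz)$ that is multiplicative up to $\vp$ in rank and keeps every nonidentity element at rank distance greater than $\tfrac14-\vp$ from the identity. The uniform lower bound $\tfrac14$, independent of the group element, is exactly what makes direct limits tractable, because it removes the need to control a length function $\delta$ that might degenerate along the directed system.

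First I would fix a finite subset $E\subset G$ and an $\vp>0$. Since $G$ is the direct limit, every element of the finite set $E$, as well as every product $gh$ with $g,h,gh\in E$, lifts to some $G_i$; because the system is directed and $E$ is finite, there is a single index $i$ and a finite subset $\widehat E\subset G_i$ with $\psi_i(\widehat E)\supseteq E$ such that all the relevant products among elements of $\widehat E$ are already realized inside $G_i$ and map correctly into $E$ under $\psi_i$. Here I must be careful: distinct elements of $E$ need only become equal far out in the system, so I should choose $i$ large enough that the chosen lifts of distinct elements of $E$ remain distinct in $G_i$; this is possible again by directedness. Then I apply Proposition~\ref{16 algebraic definition} to the linear sofic group $G_i$, the finite subset $\widehat E$, and the tolerance $\vp$, obtaining $n\in\nz$ and a map $\phi\colon \widehat E\to GL_n(\cz)$ that is multiplicative up to $\vp$ in rank and satisfies $\rho(1-\phi(s))>\tfrac14-\vp$ for all $s\in\widehat E\setminus\{e\}$.

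Transporting this back to $E$ via the identification $\psi_i\colon \widehat E\to E$ gives a map $E\to GL_n(\cz)$ with exactly the two properties required by Proposition~\ref{16 algebraic definition} for the pair $(E,\vp)$: condition (1) holds because the products witnessed in $G_i$ map to the correct products in $E$, and condition (2) holds because each nonidentity element of $E$ lifts to a nonidentity element of $\widehat E$. Since $E$ and $\vp$ were arbitrary, Proposition~\ref{16 algebraic definition} now yields that $G$ is linear sofic.

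The main obstacle I anticipate is purely bookkeeping rather than conceptual: one must verify that the lifts can be chosen compatibly so that the finitely many equations ``$gh$ equals the designated element of $E$'' are all realized in a \emph{single} group $G_i$ while simultaneously keeping the designated lifts of distinct elements distinct. Both requirements reduce to finitely many conditions, each of which holds for all sufficiently large indices, and directedness lets me pass to a common index satisfying all of them at once. The essential input that makes the argument clean is the uniform constant $\tfrac14$ supplied by Proposition~\ref{16 algebraic definition}; without it, the verification of condition (2) along the directed system would require tracking a length function that could collapse in the limit.
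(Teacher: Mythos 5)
Your proposal is correct and follows essentially the same route as the paper: lift the finite subset $E$ and its relevant products to a single $G_{i_0}$, apply Proposition \ref{16 algebraic definition} there, and transport the resulting map back along the bijection onto $E$. You also correctly identify the key point the paper emphasizes, namely that the uniform constant $\tfrac14$ from Proposition \ref{16 algebraic definition} (rather than the element-dependent $\delta$ of Proposition \ref{algebraic definition}) is what makes the argument go through.
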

\begin{proof}
For the proof we use Proposition \ref{16 algebraic
definition}. Note that Proposition \ref{algebraic
definition} is not sufficient for this result.

Let $\{G_i\}_i$ be a family of linear sofic groups together with
morphisms required to construct the group $G$, the direct
limit of this family. Let $\psi_i\colon G_i\to G$ be the morphisms provided by
the definition of the direct limit.

Let $E$ be a finite subset of $G$ and $\vp>0$. There exists
$i_0\in\nz$ and $E_0\subset G_{i_0}$ such that $\psi_{i_0}\colon E_0\to
E$ is a bijection. Apply Proposition \ref{16 algebraic definition}
for $G_{i_0}$, $E_0$ and $\vp$ to get $\phi\colon E_0\to GL_n(\cz)$.
Then $\phi\circ\psi_{i_0}^{-1}\colon E\to GL_n(\cz)$ is the required
function for $G$, $E$ and~$\vp$.
\end{proof}

Elek and Szabo proved that amenable extensions of sofic groups is
again sofic \cite{El-Sz}. The same result is true for linear sofic
groups and our proof is a careful adaptation of the proof of the sofic case presented in
\cite{Oz}.

\begin{te}
Let $G$ be a countable group and $H$ a normal subgroup of $G$. If
$H$ is linear sofic and $G/H$ is amenable then $G$ is linear
sofic.
\end{te}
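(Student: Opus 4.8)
The plan is to verify the algebraic characterization of Proposition~\ref{16 algebraic definition}, constructing an approximation of $G$ by combining a F\o lner set of the amenable quotient $Q=G/H$ with a linear sofic approximation of $H$, in the spirit of an induced representation. Let $\pi\colon G\to Q$ be the quotient map and fix a set-theoretic section $s\colon Q\to G$ with $s(e_Q)=e$. For $g\in G$ and $q\in Q$ set $\beta(g,q):=s(\pi(g)q)^{-1}\,g\,s(q)\in H$; a direct computation gives the cocycle identity $\beta(gh,q)=\beta(g,\pi(h)q)\,\beta(h,q)$. Fix a finite $E\subset G$ with $e\notin E$ and $\vp>0$; I shall produce $\phi\colon E\to GL_N(\cz)$ satisfying the two conditions of Proposition~\ref{16 algebraic definition}.

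First I would choose, using amenability of $Q$, a finite $F\subset Q$ that is $(\pi(E),\delta)$-invariant, i.e. $|F\,\triangle\,\pi(g)F|<\delta|F|$ for all $g\in E$ and the relevant products, where $\delta$ is fixed small at the end. Let $E_H\subset H$ be the finite set of all cocycle values $\beta(g,q)$ (together with the finitely many products forced by the cocycle identity on interior triples) for $g\in E$, $q\in F$. Applying Proposition~\ref{16 algebraic definition} to the linear sofic group $H$ with this $E_H$ and a small $\vp'$ yields $\psi\colon E_H\cup\{e\}\to GL_n(\cz)$ with $\psi(e)=\mathrm{Id}$, with $\rho(\psi(a)\psi(b)-\psi(ab))<\vp'$ for admissible products, and with $\rho(\mathrm{Id}-\psi(a))>\tfrac14-\vp'$ for every $a\in E_H$, $a\neq e$. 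On $\cz^{F}\otimes\cz^{n}$ (so $N=|F|\,n$) I then define $\phi(g)$ by $e_q\otimes v\mapsto e_{\pi(g)q}\otimes\psi(\beta(g,q))\,v$ whenever $q,\pi(g)q\in F$, and extend it across the equinumerous boundary fibres by any block permutation with identity twists, so that $\phi(g)\in GL_N(\cz)$.

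To verify condition~(1) I would restrict to interior fibres $q$ with $q,\pi(h)q,\pi(g)\pi(h)q\in F$: there the cocycle identity makes $\phi(g)\phi(h)$ and $\phi(gh)$ land in the same target fibre and differ only through the factor $\psi(\beta(g,\pi(h)q))\psi(\beta(h,q))-\psi(\beta(gh,q))$, whose rank defect per fibre is at most $\vp'n$. The non-interior fibres number at most $O(\delta)|F|$ by the F\o lner condition, and the source fibres are mutually orthogonal, so subadditivity of rank gives $\rho(\phi(g)\phi(h)-\phi(gh))\le\vp'+O(\delta)<\vp$ for $\delta,\vp'$ small. For condition~(2) I split into two cases. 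If $\pi(g)\neq e_Q$ then left multiplication by $\pi(g)$ is fixed-point free on $Q$, so on the interior $\phi(g)$ is a twisted block permutation all of whose cycles have length $\ell\ge 2$; on each such cycle $\mathrm{Id}-\phi(g)$ has rank at least $(\ell-1)n\ge\tfrac12\ell n$, whence $\rho(\mathrm{Id}-\phi(g))\ge\tfrac12-O(\delta)>\tfrac14-\vp$. If instead $\pi(g)=e_Q$ but $g\neq e$, then $g\in H$ and $\phi(g)=\bigoplus_{q\in F}\psi(s(q)^{-1}g\,s(q))$ is block diagonal with each $s(q)^{-1}g\,s(q)\in E_H\setminus\{e\}$, so averaging the bound from $\psi$ yields $\rho(\mathrm{Id}-\phi(g))>\tfrac14-\vp'-O(\delta)>\tfrac14-\vp$. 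By Proposition~\ref{16 algebraic definition}, $G$ is linear sofic.

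The main obstacle is condition~(2) in the case $\pi(g)\neq e_Q$: one must control the rank of $\mathrm{Id}$ minus a block permutation carrying the nontrivial twists $\psi(\beta(g,q))$ around each cycle and check these cannot collapse the rank below $\tfrac14$ (the cycle computation above shows the twist can only \emph{raise} the rank above $(\ell-1)n$). This is exactly where the strengthened, constant-length form of Proposition~\ref{16 algebraic definition} is indispensable: unlike the weaker Proposition~\ref{algebraic definition}, it supplies the uniform lower bound $\tfrac14$ simultaneously for \emph{every} nontrivial element of $E_H$, which is what keeps the averaged estimate over all fibres $q\in F$ above the threshold.
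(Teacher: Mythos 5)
Your proposal is correct and follows essentially the same route as the paper: the same cocycle $\beta(g,q)=s(\pi(g)q)^{-1}gs(q)$, the same F\o lner set in $G/H$, the same induced twisted-block-permutation construction, and the same case split for the lower bound (block-diagonal averaging when $\pi(g)=e$, a kernel/cycle-propagation estimate giving $\approx\tfrac12$ when $\pi(g)\neq e$), all resting on the strengthened Proposition \ref{16 algebraic definition}. The only cosmetic difference is that you explicitly complete $\phi(g)$ to an invertible matrix on the boundary fibres, where the paper simply notes its $\psi(g)$ is ``almost'' in $GL_{n|F|}$.
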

\begin{proof}
Let $\sigma\colon G/H\to G$ be a lift and define $\alpha\colon G\times G/H\to
H$ by $\alpha(g,\gamma)=\sigma(g\gamma^{-1})g \sigma(\gamma)$.
Then $\alpha$ satisfies the cocycle identity,
$\alpha(g_1g_2,\gamma)=\alpha(g_1,g_2\gamma)\alpha(g_2,\gamma)$.

Let $E\subset G$ be a finite subset and $\vp>0$. Let $F\subset G/H$
be such that $|gF\cap F|>(1-\vp)|F|$ for all $g\in E$. Then
$\alpha(E,F)\subset H$ is finite. Use Proposition \ref{16
algebraic definition} and the linear soficity of $H$ to get $\phi\colon \alpha(E,F)\to GL_n(\cz)$.
Construct $\psi\colon E\to M_{n\cdot|F|}(\cz)$ by:
\[\psi(g)=\sum_{\gamma\in F\cap
g^{-1}F}\phi(\alpha(g,\gamma))\otimes e_{g\gamma,\gamma}.\] Here
$e_{g\gamma,\gamma}\in M_{|F|}(\cz)$ is a unit matrix, that is a matrix
having only one entry of $1$ on the position $(g\gamma,\gamma)$.
It is easy to compute $\rho(\psi(g))=|F\cap g^{-1}F|/|F|>1-\vp$,
so $\psi(g)$ is almost an element of $GL_{n|F|}$.

We want to show that $\psi(g_1)\psi(g_2)$ is close to
$\psi(g_1g_2)$. By construction:
\[\psi(g_1)\psi(g_2)=\sum_{\gamma_1\in F\cap g_1^{-1}F}\sum_{\gamma_2\in
F\cap g_2^{-1}F}\phi(\alpha(g_1,\gamma_1))
\phi(\alpha(g_2,\gamma_2))\otimes e_{g_1\gamma_1,\gamma_1}
e_{g_2\gamma_2,\gamma_2}.\] Inside the sum we must have
$\gamma_1=g_2\gamma_2$ in order to get a non trivial term.
\[\psi(g_1)\psi(g_2)=\sum_{\gamma_2\in F\cap
g_2^{-1}F\cap(g_1g_2)^{-1}F}\phi(\alpha(g_1,g_2\gamma_2))
\phi(\alpha(g_2,\gamma_2))\otimes e_{g_1g_2\gamma_2,\gamma_2}.\]
Also $\psi(g_1g_2)=\sum_{\gamma\in
F\cap(g_1g_2)^{-1}F}\phi(\alpha(g_1,g_2\gamma)\alpha(g_2,\gamma))\otimes
e_{g_1g_2\gamma,\gamma}$. Comparing the two equations we get:
\[\rho(\psi(g_1)\psi(g_2)-\psi(g_1g_2))\leqslant\frac1{n|F|}(|F|n\vp+\vp|F|)
<2\vp.\] We only need to show that $\rho(Id-\psi(g))$ is larger
than a constant. If $g\in H$ then:
\[\rho(Id-\psi(g))=\frac1{|F|}\rho(Id-\sum_{\gamma\in
F}\phi(\alpha(g,\gamma))\otimes
e_{\gamma,\gamma})=\frac1{|F|}\sum_{\gamma\in
F}\rho(Id-\phi(\alpha(g,\gamma))\geqslant\frac14-\vp.\] Consider
now the case $g\notin H$ such that $g\gamma\neq\gamma$ for any
$\gamma\in F$. Let $x=(x_\gamma)_\gamma\in\cz^{n\cdot|F|}$ be a
vector in $Ker(Id-\psi(g))$. An easy computation will provide the
equation $\phi(\alpha(g,\gamma))(x_\gamma)=x_{g\gamma}$ for
$\gamma\in F\cap g^{-1}F$. This means that if we fix $x_\gamma$
then $x_{g\gamma}$ is completely determined. It follows that $\dim
Ker(Id-\psi(g))$ can not be greater than $1/2$ minus some $\vp$
due to the restriction $\gamma\in F\cap g^{-1}F$. So
$\rho(Id-\psi(g))=1-\dim Ker(Id-\psi(g))\geqslant 1/2-\vp$.
\end{proof}

\section{The number of universal linear sofic groups}\label{sec:universal}

A \emph{universal linear sofic} group is a metric ultraproduct of
$(GL_{n_k}(F))_k$ as defined in Definition \ref{universallinearsofic}. In
\cite{Lu} Lupini proved that, under the failure of the Continuum
Hypothesis ($CH$), there are $2^{\aleph_c}$ metric ultraproducts
of matrix algebras endowed with the metric induced by the rank
(Definition \ref{rankultraproduct}), up to algebraic isomorphism.
This result is based on methods of continuous logic developed in \cite{Fa-Sh}. Here we
extend Lupini's arguments to show that, assuming $\neg CH$, there
are $2^{\aleph_c}$ universal linear sofic groups. Such results are
not known for general weakly sofic groups when the approximating family of
finite groups endowed with bi-invariant metrics is given.

 Recall that by definition $\aleph_c:=2^{\aleph_0}$, where $\aleph_0$ is the
cardinality of $\nz$. If $a,b$ are elements of a group then
$[a,b]$ is defined as $aba^{-1}b^{-1}$.

In this section, $(n_k)_k\subset\nz$ is a fixed strictly
increasing sequence. We obtain non-isomorphic universal
linear sofic groups by using different ultrafilters.

\begin{p}(\cite[Corollary 2]{Lu})\label{manyultraproducts}
Let $(G_n)_n$ be a sequence of groups, each equipped with a
bi-invarant metric, with uniformly bounded diameter. Suppose that
for some constant $\gamma>0$ and every $l\in\nz$, for all but
finitely many $n\in\nz$, $G_n$ contains sequences
$(g_{n,i})_{i=1}^l$ and $(h_{n,i})_{i=1}^l$ such that, for every
$1\leqslant i<j\leqslant l$, $g_{n,i}$ and $h_{n,j}$ commute,
while if $1\leqslant j\leqslant i\leqslant l$,
$d([g_{n,i},h_{n_,j}];e_{G_n})\geqslant\gamma$. Then under the
failure of $CH$, there are $2^{\aleph_c}$ many pairwise non
isometrically isomorphic metric ultraproducts of the sequence
$(G_{n_k})_{k\in\nz}$.
\end{p}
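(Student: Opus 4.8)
The statement to be proved (Proposition~\ref{manyultraproducts}, cited from Lupini) asks us to produce, under $\neg CH$, the existence of $2^{\aleph_c}$ pairwise non-isometrically-isomorphic metric ultraproducts $\Pi_{k\to\omega}(G_{n_k})$ whenever the sequence $(G_{n_k})_k$ of bi-invariantly metrized groups supports, for each length $l$, the prescribed commutation pattern for all but finitely many $k$. The plan is to reduce the statement to a known dichotomy between ultrafilters via the combinatorial ``double sequence'' datum $(g_{n,i}),(h_{n,i})$, and then invoke the continuous-logic machinery of \cite{Fa-Sh} that counts non-isomorphic metric ultrapowers. First I would set up the metric structure: each $G_{n_k}$ is a bounded metric space of diameter $\leqslant D$ (uniform bound), so the metric ultraproduct $\Pi_{k\to\omega}G_{n_k}$ is a well-defined complete bi-invariantly metrized group, and an isometric isomorphism between two such ultraproducts is a group isomorphism preserving the limit metric $d_\omega$.

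\textbf{Key steps.} The core idea is that the commutation pattern encodes, in a uniform and metrically robust way, a linear order of length $l$ inside each $G_{n_k}$: the pairs $(g_{n,i},h_{n,j})$ commute exactly when $i<j$, and fail to commute by a definite amount $\gamma$ when $j\leqslant i$. This is precisely the kind of ``stable but order-definable'' configuration that, in the continuous model theory of \cite{Fa-Sh}, forces the theory of the ultraproduct to have the independence property (or, more precisely, the non-trivial order property detectable at the level of the ultrafilter). The second step is to transfer this to ultrafilters: two ultraproducts $\Pi_{k\to\omega_1}G_{n_k}$ and $\Pi_{k\to\omega_2}G_{n_k}$ will be isometrically isomorphic only when $\omega_1$ and $\omega_2$ agree on enough of this order structure, so distinct ultrafilters from a suitably large almost-disjoint or independent family yield non-isomorphic ultraproducts. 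The third step is the cardinality count: under $\neg CH$ one has a family of $2^{\aleph_c}$ ultrafilters on $\nz$ pairwise inequivalent in the relevant sense, and the preceding steps promote this to $2^{\aleph_c}$ pairwise non-isometrically-isomorphic ultraproducts. Since the hypotheses hold for all but finitely many $k$ for each fixed $l$, the configuration survives in the ultraproduct for every $l$ simultaneously, so the order-type witnessed is genuinely infinite.

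\textbf{Main obstacle.} The hard part will be the transfer in the second step: verifying that the commutation data actually produces a model-theoretic invariant of the ultraproduct sensitive enough to distinguish $2^{\aleph_c}$ ultrafilters, rather than merely distinguishing a few. This is exactly the technical content imported from \cite{Fa-Sh} and packaged by Lupini in \cite[Corollary~2]{Lu}, and I would not reprove it from scratch; instead I would verify that our hypotheses match the hypotheses of that corollary verbatim. Concretely, I would check that the uniform diameter bound supplies the boundedness required by the continuous ultraproduct formalism, that the quantitative non-commutation $d([g_{n,i},h_{n,j}];e_{G_n})\geqslant\gamma$ supplies the uniform separation needed to make the order property a genuine metric (not just algebraic) obstruction to isomorphism, and that the ``for all but finitely many $n$'' quantifier is compatible with passing to the subsequence $(n_k)_k$. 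Once the hypotheses are seen to coincide with those of \cite{Lu}, the conclusion is immediate by citation; thus the proof is essentially a verification that the combinatorial input is in the correct form, followed by an appeal to the cited continuous-logic counting result.
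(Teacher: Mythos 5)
This proposition is stated in the paper verbatim as a citation of \cite[Corollary 2]{Lu} and is given no proof there; your proposal, which reduces to checking that the hypotheses coincide with those of Lupini's corollary and then citing it, is exactly the paper's treatment. Your additional gloss on the Farah--Shelah machinery is a plausible sketch of the cited result's internals but is not required, and is correctly flagged as something you would not reprove.
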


Lupini used this proposition for $(S_n,d_{Hamm})_n$ to show that
there are $2^{\aleph_c}$ many universal sofic groups. We shall use
the same permutation that he constructed, regarded now as elements
in $(GL_n(\cz),d_{rk})_n$ to show that the hypothesis of the
proposition still holds for these groups.

\begin{p}
The hypothesis of Proposition \ref{manyultraproducts} holds for
$GL_n(\cz)$ endowed with the bi-invariant metric $d_{rk}$. The
constant $\gamma$ can be chosen $2/9$.
\end{p}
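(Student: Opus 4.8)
The plan is to transport Lupini's permutations verbatim through the permutation-matrix embedding and then recompute distances in the rank metric. First I would recall that, in his treatment of $(S_{n},d_{Hamm})$, Lupini produced for each $l\in\nz$ and all but finitely many $n$ two families $(g_{n,i})_{i=1}^{l}$ and $(h_{n,i})_{i=1}^{l}$ in $S_{n}$ with $g_{n,i}$ commuting with $h_{n,j}$ whenever $i<j$, and with $d_{Hamm}([g_{n,i},h_{n,j}],e)$ bounded below by a fixed positive constant whenever $j\leqslant i$. Since the map $S_{n}\to GL_{n}(\cz)$, $p\mapsto A_{p}$, sending a permutation to its permutation matrix is an injective group homomorphism, the matrices $A_{g_{n,i}}$ and $A_{h_{n,j}}$ inherit exactly the same commutation pattern, and the commutator transports as $[A_{g_{n,i}},A_{h_{n,j}}]=A_{[g_{n,i},h_{n,j}]}$.

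The only new ingredient is the lower bound on the commutator distance in the rank metric. For this I would invoke the inequality established inside the proof of Proposition~\ref{sofic implies lsofic}, namely $\rho(Id-A_{p})\geqslant\tfrac12\,d_{Hamm}(Id,p)$ for every $p\in S_{n}$, which follows from $cyc(p)\leqslant fix(p)+(n-fix(p))/2$. Applying it to $p=[g_{n,i},h_{n,j}]$ yields, for $j\leqslant i$,
\[d_{rk}\big([A_{g_{n,i}},A_{h_{n,j}}],Id\big)=\rho\big(Id-A_{[g_{n,i},h_{n,j}]}\big)\geqslant\tfrac12\,d_{Hamm}\big([g_{n,i},h_{n,j}],e\big),\]
so the rank commutator distance is at least half of Lupini's Hamming lower bound; since the latter is $4/9$ this produces the constant $2/9$. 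The remaining hypotheses of Proposition~\ref{manyultraproducts} are immediate: $d_{rk}$ takes values in $[0,1]$, so the diameters are uniformly bounded by $1$, and the bi-invariance of $d_{rk}$ on $GL_{n}(\cz)$ was recorded in the Observation preceding Definition~\ref{universallinearsofic}.

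The one point requiring care, and the step I expect to be the crux, is the passage from the Hamming to the rank metric: because $\rho(Id-A_{p})=1-cyc(p)/n$ rather than $1-fix(p)/n$, the rank distance can be as small as half the Hamming distance, so one must confirm that Lupini's Hamming lower bound is large enough to survive this factor of two. Should the crude inequality prove too lossy, the robust alternative is to compute $\rho(Id-A_{[g_{n,i},h_{n,j}]})$ exactly from the cycle type of the commutators via $\rho(Id-A_{p})=1-cyc(p)/n$; if these commutators decompose into short cycles supported on a definite fraction of the coordinates, this direct count recovers $2/9$ with no slack and bypasses any dependence on the precise value of the original Hamming constant.
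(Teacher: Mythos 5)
Your overall strategy --- reuse Lupini's elements via the permutation-matrix embedding and control the commutator distance in the rank metric --- is exactly the paper's, and your ``fallback'' (computing $\rho(Id-A_p)=1-cyc(p)/n$ directly from the cycle type of the commutators) is precisely what the paper does. However, your primary route does not deliver the stated constant, and the numerical claim it rests on is wrong. Lupini's commutators $[g_{n,i},h_{n,j}]$ (for $j\leqslant i$) are, after conjugation by the tensor structure, products of $3^{l-1}$ disjoint $3$-cycles acting on a block of $3^l$ coordinates, padded by $n-3^l$ fixed points with $3^l\leqslant n<3^{l+1}$; hence their Hamming distance to the identity is $3^l/n$, whose lower bound is $1/3$, not $4/9$. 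Feeding $1/3$ into the worst-case inequality $\rho(Id-A_p)\geqslant\tfrac12\,d_{Hamm}(Id,p)$ yields only $1/6$. That is still a positive $\gamma$, so it would suffice for the application of Proposition \ref{manyultraproducts}, but it does not prove the statement as written, which asserts $\gamma=2/9$.

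The exact computation is therefore not an optional safeguard but the necessary step, and it is where the paper's proof lives: the commutator $Id_{3^{j-1}}\otimes A_{(123)}\otimes Id_{3^{l-j}}\oplus Id_{n-3^l}$ has $3^{l-1}+(n-3^l)$ cycles, so
\[
\rho\big(Id-A_{[g_{n,i},h_{n,j}]}\big)=\frac{3^l-3^{l-1}}{n}\geqslant\frac{3^l-3^{l-1}}{3^{l+1}}=\frac29 .
\]
Equivalently, for a permutation all of whose nontrivial cycles have length $3$ one has $\rho(Id-A_p)=\tfrac23\,d_{Hamm}(Id,p)$ exactly, which beats the generic factor $\tfrac12$ (attained only by products of transpositions). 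With that computation supplied, your argument coincides with the paper's; the remaining points you make (transport of the commutation pattern, bounded diameter, bi-invariance of $d_{rk}$) are all correct.
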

\begin{proof}
As $(GL_n(\cz),d_{rk})_n$ are bi-invariant metric groups with
uniformly bounded diameter we just need to construct elements
$g_{n,i}$ and $h_{n,i}$.

First assume that $n=3^l$ for some $l\in\nz$. Let $(12)$ and
$(23)$ be two transpositions in $S_3$ and denote by $A_{(12)}$ and
$A_{(23)}$ the corresponding permutation matrices in $GL_3(\cz)$.
For $1\leqslant i\leqslant l$ define $g_{n,i}=A_{(12)}\otimes
A_{(12)}\otimes\ldots\otimes A_{(12)}\otimes Id_{3^{l-i}}$
($A_{(12)}$ is used $i$ times) and $h_{n,i}=Id_{3^{i-1}}\otimes
A_{(23)}\otimes Id_{3^{l-i}}$. It is easy to check that for $i<j$
$g_{n,i}$ and $h_{n,j}$ commutes, while for $i\geqslant j$
$[g_{n,i},h_{n,j}]=Id_{3^{j-1}}\otimes A_{(123)}\otimes
Id_{3^{l-j}}$. This means that $[g_{n,i},h_{n,j}]$ is composed of
$3^{l-1}$ cycles of length $3$, so
$d_{rk}([g_{n,i},h_{n,j}],Id_{3^l})=1-3^{l-1}/3^l=2/3$.

Let now $n\in\nz$ be an arbitrary number and $l\in\nz$ such that
$3^l\leqslant n<3^{l+1}$. Define $g_{n,i}=g_{3^l,i}\oplus
Id_{n-3^l}$ and $h_{n,i}=h_{3^l,i}\oplus Id_{n-3^l}$. Again for
$i<j$ $g_{n,i}$ and $h_{n,j}$ commutes, while for $i\geqslant j$
$[g_{n,i},h_{n,j}]=Id_{3^{j-1}}\otimes A_{(123)}\otimes
Id_{3^{l-j}}\oplus Id_{n-3^l}$. Then
$d_{rk}([g_{n,i},h_{n,j}],Id_{3^l})=1-(3^{l-1}+n-3^l)/n=(3^l-3^{l-1})/n\geqslant
2/9$. Thus, the constant $\gamma$ can be set $2/9$.
\end{proof}

\section{Almost finite dimensional representations}\label{sec:almostfd}

In this section, we work only with unital algebras.  The following propery of
algebras was introduced by Gabor Elek.

\begin{de}(\cite[Definition 1.1]{El})
A unital $F$-algebra $A$ has \emph{almost finite dimensional
representations} if for any finite dimensional subspace $1\in
L\subset A$ and $\vp>0$, there exists a finite dimensional vector
space $V$ together with a subspace $V_\vp\subset V$ such that
\begin{enumerate}
\item there exists a linear (not necessarily injective) map
$\psi_{L,\vp}\colon L\to End_F(V)$ such that $\psi_{L,\vp}(1)=Id$ and
$\psi_{L,\vp}(a)\psi_{L,\vp}(b)(v)=\psi_{L,\vp}(ab)(v)$ for
$a,b,ab\in L$ and $v\in V_\vp$. \item $\dim_FV-\dim_FV_\vp<\vp\cdot
\dim_FV$.
\end{enumerate}
Such a map is called an \emph{ $\vp$-almost representation} of $L$.
\end{de}

\begin{p}
A unital algebra $A$ has almost finite dimensional representations
if and only if there exists a unital morphism (not necessarily injective)
$\Theta\colon A\to\Pi M_{n_k}(F)/Ker\rho_\omega$.
\end{p}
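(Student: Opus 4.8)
The plan is to prove both implications by connecting the definition of almost finite dimensional representations to the metric ultraproduct $\Pi_{k\to\omega} M_{n_k}(F)/Ker\rho_\omega$. The central observation is that condition (1) of the definition says $\psi_{L,\vp}(a)\psi_{L,\vp}(b)$ and $\psi_{L,\vp}(ab)$ agree on the large subspace $V_\vp$, and condition (2) says $V_\vp$ has nearly full dimension; together these translate precisely into the statement $\rho(\psi_{L,\vp}(a)\psi_{L,\vp}(b)-\psi_{L,\vp}(ab))<\vp$, since the rank of a map vanishing on $V_\vp$ is at most $\dim_F V-\dim_F V_\vp<\vp\cdot\dim_F V$. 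Thus an $\vp$-almost representation of $L$ is the same data as an approximate unital algebra homomorphism in the rank metric, exactly the kind of object whose ultraproduct assembles into a genuine morphism into the ultraproduct.

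For the forward direction, I would start from a countable exhaustion of $A$ by finite dimensional subspaces $1\in L_1\subset L_2\subset\cdots$ with $\bigcup_k L_k=A$ (this uses that $A$ is countably generated, so its underlying vector space is countable dimensional). For each $k$ I would apply the hypothesis with $L=L_k$ and $\vp=1/k$ to obtain a finite dimensional space $V_k:=End_F(V)\cong M_{n_k}(F)$ together with a linear map $\psi_k:=\psi_{L_k,1/k}:L_k\to M_{n_k}(F)$ satisfying $\psi_k(1)=Id$ and $\rho(\psi_k(a)\psi_k(b)-\psi_k(ab))<1/k$ for $a,b,ab\in L_k$. I would then extend each $\psi_k$ arbitrarily (say by $0$ outside $L_k$) to all of $A$ and define $\Theta(x):=\Pi_{k\to\omega}\psi_k(x)/Ker\rho_\omega$. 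Linearity of $\Theta$ is immediate from linearity of each $\psi_k$. Multiplicativity follows because for fixed $a,b\in A$ one has $a,b,ab\in L_k$ for all large $k$, whence $\rho(\psi_k(a)\psi_k(b)-\psi_k(ab))<1/k\to 0$, so the two sides agree in the ultralimit; unitality follows from $\psi_k(1)=Id$. This $\Theta$ is the required (not necessarily injective) unital morphism.

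For the reverse direction, I would take a unital morphism $\Theta:A\to\Pi M_{n_k}(F)/Ker\rho_\omega$ with representing sequences $\theta_k:A\to M_{n_k}(F)$, and given $1\in L\subset A$ finite dimensional and $\vp>0$, produce an $\vp$-almost representation. Setting $V:=F^{n_k}$ and $\psi_{L,\vp}:=\theta_k|_L$ for a suitable $k$, the defining properties of $\Theta$ give that for each pair $a,b,ab\in L$ the rank $\rho(\theta_k(a)\theta_k(b)-\theta_k(ab))\to 0$ along $\omega$, so I can pick a single $k$ for which all these finitely many ranks are simultaneously smaller than $\vp/2$ (using that $L$ is finite dimensional, I only need the condition on a spanning finite set of products). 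The subspace $V_\vp$ is then the intersection of the kernels of the finitely many error maps $\theta_k(a)\theta_k(b)-\theta_k(ab)$; each has rank $<(\vp/2)n_k$, and by subadditivity of rank over the finitely many generators this intersection has codimension $<\vp\cdot n_k=\vp\cdot\dim_F V$, giving condition (2), while on $V_\vp$ condition (1) holds by construction and $\psi_{L,\vp}(1)=\theta_k(1)=Id$ by unitality.

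The main obstacle I anticipate is organizing the finiteness bookkeeping cleanly: one must be careful that in the reverse direction the number of defining relations $\psi(a)\psi(b)=\psi(ab)$ to be controlled on $V_\vp$ is finite. Since $L$ is finite dimensional it suffices to impose the relation on a basis and then extend by bilinearity, but one should verify that approximate multiplicativity on a spanning set yields the exact relation on the common kernel $V_\vp$ and that the accumulated codimension bound from intersecting finitely many low-rank kernels stays below $\vp\cdot\dim_F V$ (this is where the factor $\vp/2$ slack and subadditivity of rank, property (2) of Proposition~\ref{prop of rk}, are used). The forward direction is essentially formal once the exhaustion is set up; the translation between the subspace-of-agreement formulation and the rank-metric formulation is the conceptual crux and is where I would invest the most care.
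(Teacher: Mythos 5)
Your proof is correct and follows essentially the same route as the paper's (which is in fact far terser: it takes the ultraproduct of the maps $\psi_{L_k,\vp_k}$ along an exhaustion for the forward direction and dismisses the converse as immediate from the definition of the ultraproduct). The only quibble is your choice of $\vp/2$ per error map in the reverse direction: with $d=\dim_F L$ you control on the order of $d^2$ basis pairs, so you should take each rank below $\vp/d^2$ before intersecting kernels, a repair you already anticipate in your final paragraph.
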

\begin{proof}
Let $(L_k)_k$ be an increasing sequence of finite dimensional
subspaces of $A$ such that $A=\bigcup_kL_k$ and $1\in L_k$. Let
$(\vp_k)_k$ be a decreasing sequence of strictly positive reals
such that $\lim_k\vp_k=0$. For every $k$, let
$\psi_{L_k,\vp_k}\colon L_k\to End_F(V_k)$ be the map from the
previous definition. Define $n_k=\dim_FV_k$. Then $\Theta\colon A\to\Pi
M_{n_k}(F)/Ker\rho_\omega$ defined by
$\Theta=\Pi\psi_{L_k,\vp_k}/Ker \rho_\omega$ is a unital morphism.
The reverse implication follows from the definition of
ultraproduct.
\end{proof}

If we compare this definition to the definition of linear sofic
algebras we see that having almost finite dimensional
representations is the first step towards linear soficity. However,
this is not sufficient. We introduce an object that measures how far
from being linear sofic is an algebra with almost finite
dimensional representations. This is inspired by the definition of the rank radical by Elek.

\subsection{The rank radical}

\begin{de}(\cite[Definition 4.1]{El})
The \emph{rank radical} $RR(A)$ of an algebra is defined as
follows: if $A$ does not have almost finite dimensional
representations then $RR(A)=A$. Otherwise, let $p\in RR(A)$ if
there exists a finite dimensional subspace $L$ with
$\{1,p\}\subset L\subset A$ such that for any $\delta>0$ there
exists $n_\delta>0$ with the following property: if
$0<\vp<n_\delta$ and $\psi_{L,\vp}\colon L\to End(V)$ is an $\vp$-almost
representation then $\dim Ran(\psi_{L,\vp}(p))<\delta\cdot \dim V$.
\end{de}

We restate this property in ultraproduct language. We use the following definition.

\begin{de}
If $1\in L\subset A$ is a linear subspace of an algebra, then a
\emph{partial morphism} of $L$ is a linear function $\Phi\colon L\to\Pi
M_{n_k}(F)/Ker\rho_\omega$ such that $\Phi(1)=1$ and
$\Phi(x)\Phi(y)=\Phi(xy)$ whenever $x,y,xy\in L$.
\end{de}

\begin{p}\label{description of RR}
For any element $p$ of an algebra $p\in RR(A)$ if and only if there exists a
finite dimensional subspace $L$ with $\{1,p\}\subset L\subset A$
such that for any partial morphism $\Phi\colon L\to\Pi
M_{n_k}(F)/Ker\rho_\omega$ we have $\Phi(p)=0$.
\end{p}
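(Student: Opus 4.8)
The plan is to prove the two implications separately, translating the quantifier-over-$\delta$-and-$\vp$ formulation of $p\in RR(A)$ into the ultraproduct language of partial morphisms. The bridge between the two formulations is the standard ultraproduct dictionary: a partial morphism $\Phi\colon L\to\Pi M_{n_k}(F)/Ker\rho_\omega$ is the same data as a sequence of $\vp_k$-almost representations $\psi_{L,\vp_k}\colon L\to End_F(V_k)$ with $\vp_k\to 0$, realized componentwise as $\Phi=\Pi\psi_{L,\vp_k}/Ker\rho_\omega$; conversely any partial morphism lifts to such a sequence. This is exactly the correspondence established in the proposition characterizing almost finite dimensional representations just above, applied at the level of a single finite dimensional subspace $L$, together with the observation that the multiplicativity condition $\Phi(x)\Phi(y)=\Phi(xy)$ for $x,y,xy\in L$ becomes, in the kernel of $\rho_\omega$, the asymptotic almost-representation identity on a subspace of codimension $o(n_k)$. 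I would set up this dictionary first as a short lemma-free remark at the start of the proof, since both directions rely on it.

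For the forward direction, assume $p\in RR(A)$ and fix the finite dimensional $L$ with $\{1,p\}\subset L\subset A$ supplied by the definition. I would take an arbitrary partial morphism $\Phi\colon L\to\Pi M_{n_k}(F)/Ker\rho_\omega$, lift it to a sequence of $\vp_k$-almost representations $\psi_{L,\vp_k}$ with $\vp_k\to 0$, and show $\Phi(p)=0$, i.e.\ $\rho_\omega(\Phi(p))=\lim_{k\to\omega}\frac{1}{n_k}\dim Ran(\psi_{L,\vp_k}(p))=0$. The point is that for each fixed $\delta>0$ the definition of $RR(A)$ produces a threshold $n_\delta$ such that $\dim Ran(\psi_{L,\vp}(p))<\delta\cdot\dim V$ for all $\vp<n_\delta$; since $\vp_k\to 0$ we have $\vp_k<n_\delta$ for all large $k$, hence $\frac{1}{n_k}\dim Ran(\psi_{L,\vp_k}(p))<\delta$ eventually, and therefore $\rho_\omega(\Phi(p))\leqslant\delta$. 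As $\delta$ was arbitrary, $\rho_\omega(\Phi(p))=0$, which is precisely $\Phi(p)=0$ in the ultraproduct.

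For the reverse direction, I would argue by contraposition. Suppose that for the given $L$ the conclusion about partial morphisms holds (every partial morphism kills $p$), but $p\notin RR(A)$; I must derive a contradiction, using that $p\notin RR(A)$ for this same subspace $L$ means there is some $\delta_0>0$ for which no threshold $n_{\delta_0}$ works. Negating the definition yields $\delta_0>0$ such that for every $m$ there exists $\vp<1/m$ and an $\vp$-almost representation $\psi$ with $\dim Ran(\psi(p))\geqslant\delta_0\cdot\dim V$. I would extract from these a sequence $\psi_{L,\vp_k}$ with $\vp_k\to 0$ whose ranks on $p$ stay bounded below by $\delta_0$; assembling them into $\Phi=\Pi\psi_{L,\vp_k}/Ker\rho_\omega$ produces a partial morphism with $\rho_\omega(\Phi(p))\geqslant\delta_0>0$, so $\Phi(p)\neq 0$, contradicting the hypothesis. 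One subtlety worth flagging: the definition of $RR(A)$ is an \emph{existential} statement over $L$ (``there exists a finite dimensional subspace $L$''), so in the reverse direction one has the freedom to pick $L$, whereas the partial-morphism condition must be read with matching quantifier placement; I would state both conditions with the same witnessing $L$ so the equivalence is literally an equivalence of the inner statements.

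The main obstacle I anticipate is the careful handling of the double limit and the quantifier alternation hidden in the ultrafilter. The definition of $RR(A)$ phrases the decay of $\dim Ran(\psi(p))$ as a uniform threshold in $\vp$, while the ultraproduct only sees the $\omega$-limit along a single sequence $\vp_k$; the forward direction is clean because the threshold statement is strong enough to control every sequence, but the reverse direction requires diagonally choosing a sequence $\vp_k\to 0$ along which the rank fails to decay, and then verifying that the resulting $\Phi$ is genuinely a partial morphism (multiplicativity on $L$ in the quotient) and not merely a linear map. This amounts to checking that the almost-multiplicativity on $V_\vp\subset V$, with $\dim V-\dim V_\vp<\vp\dim V$, passes to exact multiplicativity modulo $Ker\rho_\omega$ because the defect is supported on a subspace of vanishing normalized dimension. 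Once this bookkeeping is in place the argument is routine, so I would concentrate the writing effort precisely there.
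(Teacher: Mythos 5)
Your two directions reproduce the paper's argument for the case where $A$ \emph{has} almost finite dimensional representations: the forward direction (threshold $n_\delta$ controls $\rho(\phi_k(p))$ on an ultrafilter set, hence $\rho_\omega(\Phi(p))<\delta$ for all $\delta$) and the reverse direction (diagonal extraction of $\vp_k$-almost representations with $\rho(\psi_{L,\vp_k}(p))\geqslant\delta_0$, assembled into a partial morphism not killing $p$) are both correct and essentially identical to the paper's. Your remark about the quantifier over $L$ is also handled correctly: since $p\notin RR(A)$ negates the existential over $L$, the lower bound $\delta_L>0$ is available for whichever $L$ one is handed.

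However, there is a genuine gap: you have omitted the degenerate branch of the definition of $RR(A)$. When $A$ does \emph{not} have almost finite dimensional representations, $RR(A)=A$ by fiat, so $p\in RR(A)$ for every $p$, but the definition supplies no witnessing subspace $L$ and no thresholds $n_\delta$ -- your forward direction's opening move, ``fix the finite dimensional $L$ supplied by the definition,'' has nothing to fix. One must still produce some $L\supset\{1,p\}$ on which every partial morphism kills $p$, and the paper does this by showing that some $L$ admits \emph{no} partial morphism at all (so the condition holds vacuously). The argument is a diagonal/compactness step, not bookkeeping: if every member $L_i$ of an exhaustion $A=\bigcup_i L_i$ admitted a partial morphism $\Phi_i$, each would yield an $\vp_i$-almost representation $\psi_i$ of $L_i$ with $\vp_i\to 0$, and $\Theta=\Pi\psi_i/Ker\rho_\omega$ would be a unital morphism of all of $A$ into a rank ultraproduct, contradicting the assumption that $A$ lacks almost finite dimensional representations. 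Without this case your ``if and only if'' is only proved on the subclass of algebras with almost finite dimensional representations.
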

\begin{proof}
We first assume that $A$ has almost finite dimensional
representations. Note that this is equivalent to
$RR(A)\varsubsetneq A$.

Let $p\in RR(A)$. Let $L$ be the finite dimensional subspace from
the definition of the rank radical. Fix $\delta>0$ and use again
the definition to get a $n_\delta>0$. Let $\Phi\colon L\to\Pi
M_{n_k}(F)/Ker\rho_\omega$ be a partial morphism,
$\Phi=\Pi\phi_k/Ker\rho_\omega$. Let $0<\vp<n_\delta$. Then there
exists $H\in\omega$ such that $\phi_k\colon L\to M_{n_k}(F)$ is an
$\vp$-almost representation of $L$ for any $k\in H$. Then $\dim
Ran(\phi_k(p))<\delta n_k$, or with our notation
$\rho(\phi_k(p))<\delta$ for any $k\in H$. This implies that
$\rho_\omega(\Phi(p))<\delta$. As $\delta$ was arbitrary it
follows that $\rho_\omega(\Phi(p))=0$ so $\Phi(p)=0$.

Suppose now $p\notin RR(A)$. Let $L$ be an arbitrary finite
dimensional subspace. Then there exists $\delta_L>0$ such that for
any $\vp>0$ there exists an $\vp$-almost representation
$\psi_{L,\vp}\colon L\to End(V)$  with $\dim
Ran(\psi_{L,\vp}(p))\geqslant\delta_L\cdot \dim V$. This is
equivalent to $\rho(\psi_{L,\vp}(p))\geqslant\delta_L$.

Let $(\vp_k)_k$ be a decreasing sequence converging to $0$ and
$\psi_{L,\vp_k}\colon L\to End(V_k)$ be $\vp_k$-almost representations
with $\dim \rho(\psi_{L,\vp_k}(p))\geqslant\delta_L$. Define
$\Psi=\Pi\psi_{L,\vp_k}/Ker\rho_\omega$. Because $\vp_k\to 0$,
$\Psi$ is a partial morphism of $L$. Also
$\rho_\omega(\Psi(p))\geqslant\delta_L$ so $\Psi(p)\neq 0$.

Consider now the case $RR(A)=A$. Let $p\in A$. We shall prove that
there exists a finite dimensional subspace $L$ with
$\{1,p\}\subset L\subset A$ such that there is no partial morphism
$\Phi:L\to\Pi M_{n_k}(F)/Ker\rho_\omega$.

Let $A=\bigcup_iL_i$ where $(L_i)_i$ is an increasing sequence of
finite dimensional subspaces of $A$ such that $\{1,p\}\in L_i$.
Assume that for each $i$ there exists a partial morphism
$\Phi_i:L_i\to\Pi M_{n_{i,k}}(F)/Ker\rho_\omega$. Consider also
$(\vp_i)_i$ a sequence of strictly positive real numbers
converging to $0$.

The existence of $\Phi_i$ implies the existence of $\psi_i:L_i\to
M_{n_i,k_i}$ an $\vp_i$-almost representation of $L_i$. Define
$\Theta=\Pi\psi_i/Ker\rho_\omega$. Because $A=\bigcup_iL_i$ and
$\vp_i\to 0$, $\Theta$ is a unital morphism. This is in
contradiction with the fact that $A$ does not have almost finite
dimensional representations.
\end{proof}

Elek proved that $RR(A)$ is an ideal. We can deduce this from our
description.

\begin{cor}
The set $RR(A)$ is an ideal.
\end{cor}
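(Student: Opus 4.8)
The plan is to use the characterization of $RR(A)$ from Proposition~\ref{description of RR} rather than the original definition, since the ultraproduct/partial-morphism language behaves well under multiplication. I want to show $RR(A)$ is closed under addition and under left and right multiplication by arbitrary elements of $A$. The key tool will be the combination of two partial morphisms into one via Proposition~\ref{sum of representations}: given partial morphisms on two subspaces, I can average them so that the combined morphism vanishes on $p$ exactly when both original morphisms do, and I can test membership in $RR(A)$ on a single sufficiently large finite-dimensional subspace $L$.

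First I would handle the case $RR(A)=A$ separately, where there is nothing to prove since $A$ is an ideal of itself. So assume $RR(A)\subsetneq A$, equivalently $A$ has almost finite dimensional representations. For closure under addition, take $p,q\in RR(A)$ with witnessing subspaces $L_p$ and $L_q$. I would set $L\supseteq L_p\cup L_q\cup\{1,p,q,p+q\}$ finite dimensional. For \emph{any} partial morphism $\Phi\colon L\to\Pi M_{n_k}(F)/Ker\rho_\omega$, its restriction to $L_p$ is a partial morphism of $L_p$, so by Proposition~\ref{description of RR} we get $\Phi(p)=0$; similarly $\Phi(q)=0$; hence $\Phi(p+q)=\Phi(p)+\Phi(q)=0$ by linearity of $\Phi$. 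Since this holds for every partial morphism on $L$, Proposition~\ref{description of RR} gives $p+q\in RR(A)$.

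For the ideal property, take $p\in RR(A)$ and $a\in A$; I would prove $ap,pa\in RR(A)$. Choose $L$ finite dimensional containing $L_p$ together with $\{1,a,p,ap,pa\}$. For any partial morphism $\Phi$ on $L$, restriction to $L_p$ forces $\Phi(p)=0$, and then $\Phi(ap)=\Phi(a)\Phi(p)=0$ and $\Phi(pa)=\Phi(p)\Phi(a)=0$, using that $\Phi$ respects products whenever $a,p,ap$ (respectively $p,a,pa$) all lie in $L$. Again invoking Proposition~\ref{description of RR}, we conclude $ap,pa\in RR(A)$, so $RR(A)$ is a two-sided ideal.

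The main obstacle is the possibility that a given element $a\in A$ does not lie in a finite-dimensional subspace that also contains the witnessing data for $p$; but this is resolved simply by enlarging $L$, since a finite union of finite-dimensional subspaces together with finitely many extra elements still spans a finite-dimensional subspace, and the restriction of any partial morphism of the larger $L$ to the smaller witnessing subspace is still a partial morphism. The only genuine subtlety is making sure that $\Phi$ being a partial morphism on the enlarged $L$ is enough to guarantee $\Phi(a)\Phi(p)=\Phi(ap)$, which requires $a,p,ap\in L$ — this is exactly why I include all the relevant products in $L$ from the start. Everything else is immediate from linearity and the multiplicativity built into the definition of a partial morphism.
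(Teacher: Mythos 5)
Your proof is correct and follows essentially the same route as the paper: use the partial-morphism characterization of $RR(A)$ from Proposition~\ref{description of RR}, enlarge the witnessing subspace $L_p$ to a finite-dimensional $L$ containing the relevant products, and observe that any partial morphism of $L$ restricts to one of $L_p$, forcing $\Phi(p)=0$ and hence $\Phi(ap)=\Phi(a)\Phi(p)=0$. The only differences are cosmetic: you also spell out closure under addition (which the paper leaves implicit), and the appeal to Proposition~\ref{sum of representations} in your opening plan is never actually needed in the argument.
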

\begin{proof}
Let $a\in A$ and $p\in RR(A)$. Let $\{1,p\}\subset L_p\subset A$
be a finite dimensional subspace such that $\Phi(p)=0$ for any
partial morphism $\Phi\colon L_p\to\Pi M_{n_k}(F)/Ker\rho_\omega$.

Define $L_{ap}=Sp\{L_p\cup\{a,ap\}\}$, defined by thaking the linear span. Let $\Psi\colon L_{ap}\to\Pi
M_{n_k}(F)/Ker\rho_\omega$ be a partial morphism. Then
$\Psi(ap)=\Psi(a)\Psi(p)$. But $L_p\subset L_{ap}$ and $p\in
RR(A)$ implies $\Psi(p)=0$. So $\Psi(ap)=0$. The same proof works
for $pa$.
\end{proof}

\begin{te}
The rank radical of $A/RR(A)$ is $0$.
\end{te}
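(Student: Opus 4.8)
The plan is to deduce everything from the ultraproduct characterization of the rank radical in Proposition~\ref{description of RR}, applied to the quotient algebra $A/RR(A)$. Write $\pi\colon A\to A/RR(A)$ for the canonical projection. It suffices to show that every nonzero $\bar p\in A/RR(A)$ fails the defining condition for membership in $RR(A/RR(A))$; that is, for every finite dimensional subspace $\bar L$ with $\{1,\bar p\}\subset\bar L\subset A/RR(A)$ I must produce a partial morphism $\bar\Phi\colon\bar L\to\Pi M_{n_k}(F)/Ker\rho_\omega$ with $\bar\Phi(\bar p)\neq 0$. Fix such $\bar p$ and $\bar L$, choose a basis $\bar e_1=1,\bar e_2=\bar p,\bar e_3,\dots,\bar e_m$ of $\bar L$, and lift it to $e_1=1,e_2=p,e_3,\dots,e_m\in A$ with $\pi(e_i)=\bar e_i$. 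Setting $L_0=\mathrm{Sp}\{e_1,\dots,e_m\}$, linear independence of the $\bar e_i$ makes $\pi|_{L_0}\colon L_0\to\bar L$ a linear isomorphism, and $p\notin RR(A)$ precisely because $\bar p\neq 0$.

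The idea is to transport a partial morphism from a finite dimensional subspace of $A$ through this isomorphism. The obstruction is that a linear lift need not be multiplicative: for $x,y\in L_0$ the product $xy$ need not lie in $L_0$, and although $\pi(xy)=\pi(x)\pi(y)$ coincides with $\pi(w)$ for the unique preimage $w\in L_0$ (when $\pi(x)\pi(y)\in\bar L$), the difference $xy-w$ is a generally nonzero element of $RR(A)$. To absorb all such defects at once I would enlarge $L_0$. Put $L_1=L_0+\mathrm{Sp}(L_0\cdot L_0)$, a finite dimensional subspace of $A$ containing $L_0$, and let $W=RR(A)\cap L_1$, which is finite dimensional. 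For a basis $r_1,\dots,r_t$ of $W$, each $r_s\in RR(A)$ supplies by Proposition~\ref{description of RR} a finite dimensional subspace $L_{r_s}$ with $\{1,r_s\}\subset L_{r_s}$ on which every partial morphism kills $r_s$. Finally set
\[L=L_1+\sum_{s=1}^t L_{r_s},\]
still finite dimensional and with $\{1,p\}\subset L$.

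Since $p\notin RR(A)$, Proposition~\ref{description of RR} furnishes a partial morphism $\Phi\colon L\to\Pi M_{n_k}(F)/Ker\rho_\omega$ with $\Phi(p)\neq 0$. Because each $L_{r_s}\subset L$, the restriction $\Phi|_{L_{r_s}}$ is again a partial morphism, so $\Phi(r_s)=0$ for every $s$; by linearity $\Phi$ vanishes on all of $W$. I would then define $\bar\Phi=\Phi\circ(\pi|_{L_0})^{-1}\colon\bar L\to\Pi M_{n_k}(F)/Ker\rho_\omega$. It is linear, unital since $\bar\Phi(1)=\Phi(1)=1$, and satisfies $\bar\Phi(\bar p)=\Phi(p)\neq 0$. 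To verify that it is a partial morphism, take $\bar x,\bar y,\bar x\bar y\in\bar L$ and let $x,y,w\in L_0$ be their respective preimages under $\pi|_{L_0}$; then $xy\in L_1\subset L$ yields $\Phi(x)\Phi(y)=\Phi(xy)$, while $xy-w\in W$ yields $\Phi(xy)=\Phi(w)$, whence $\bar\Phi(\bar x)\bar\Phi(\bar y)=\Phi(x)\Phi(y)=\Phi(w)=\bar\Phi(\bar x\bar y)$. Thus $\bar p\notin RR(A/RR(A))$, and as $\bar p$ was an arbitrary nonzero element, $RR(A/RR(A))=0$.

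The main obstacle is exactly the failure of a linear lift to respect multiplication: the elements $xy-w$ encode this failure and a priori prevent the lifted map from descending to the quotient. The crux is that these defect elements all lie in $RR(A)$ and span only a finite dimensional space $W$, so the defining property of the radical lets me enlarge the domain $L$ to force every partial morphism on it to annihilate $W$; once that is arranged, the descent through $\pi|_{L_0}$ is automatic and multiplicativity of $\bar\Phi$ follows formally.
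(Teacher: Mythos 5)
Your proof is correct and follows essentially the same route as the paper's: lift a finite dimensional subspace of the quotient, enlarge it by products, note that the multiplicative defects land in the finite dimensional space $RR(A)\cap L_1$, adjoin the subspaces witnessing that each basis vector of that defect space is killed by every partial morphism, and then use $p\notin RR(A)$ to get a partial morphism that descends. The only cosmetic difference is that you arrange $\pi|_{L_0}$ to be an isomorphism onto $\bar L$ so the descent is an explicit composition, whereas the paper factors $\Phi$ through the quotient; the substance is identical.
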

\begin{proof}
We denote by $f\colon A\to A/RR(A)$ the canonical projection. Let
$v\in A/RR(A)$, $v\neq 0$. Let $\{1,v\}\subset L$ be a finite
dimensional subspace of $A/RR(A)$. Choose $N\subset A$ a finite
dimensional subspace such that $f(N)=L$ and $1\in N$. There exists
$u\in N$ such that $v=f(u)$ and $u\notin RR(A)$. Define
$N_0=Sp\{N\cup N^2\}\cap RR(A)$. Then $N_0$ is finite dimensional
and choose $\{z_1,\ldots,z_r\}$ a base for $N_0$. For any
$1\leqslant i\leqslant r$ there exists $L_i\subset A$ a finite
dimensional subspace such that for any partial morphism
$\Phi\colon L_i\to\Pi M_{n_k}/Ker\rho_\omega$ $\Phi(z_i)=0$.

Define $N_1=Sp\{N\cup N^2\cup\bigcup_i L_i\}$. Because $u\notin
RR(A)$ there exists a partial morphism $\Phi\colon N_1\to\Pi
M_{n_k}/Ker\rho_\omega$ such that $\Phi(u)\neq 0$. As $L_i\subset
N_1$ we get $\Phi(z_i)=0$ for any $i$. This implies that
$\Phi(N_0)=0$ so we can factor $\Phi$ to get a linear function
$\Psi\colon L\cup L^2\to\Pi M_{n_k}/Ker\rho_\omega$. If $a,b\in L$ then
$\Psi(a)\Psi(b)=\Psi(ab)$ so $\Psi$ restricted to $L$ is a partial
morphism. Also $\Psi(v)=\Phi(u)\neq 0$. It follows that $v\notin
RR(A/RR(A))$.
\end{proof}

\begin{p}(\cite[Proposition 4.3]{El})
Let $A$ be an algebra such that $RR(A)=0$. Then $A$ is stably
finite.
\end{p}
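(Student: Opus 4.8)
The plan is to combine the ultraproduct description of the rank radical from Proposition~\ref{description of RR} with the stable finiteness of the rank ultraproduct established in Proposition~\ref{stably finite}. By the very definition of stable finiteness, it suffices to show that $M_n(A)$ is directly finite for every $n\in\nz$; the case $A=0$ being trivial, I may assume $A\neq 0$, so that $RR(A)=0\neq A$ forces $A$ to have almost finite dimensional representations and hence to admit partial morphisms of its finite dimensional subspaces. Suppose, for contradiction, that $X,Y\in M_n(A)$ satisfy $XY=I_n$ but $YX\neq I_n$. Writing $X=(x_{ij})$ and $Y=(y_{ij})$ with entries in $A$, the matrix $I_n-YX$ is nonzero, so there is an entry $p:=(I_n-YX)_{st}\neq 0$.

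Next I would build a finite dimensional subspace $L\subset A$ tailored so that a single partial morphism, extended entrywise, becomes multiplicative on the products occurring in $XY$ and $YX$. Concretely, let $L$ be the linear span of $1$, all the entries $x_{ij}$ and $y_{ij}$, and all the products $x_{ik}y_{kj}$ and $y_{ik}x_{kj}$. Then $L$ is finite dimensional, $1\in L$, and $p=\delta_{st}\cdot 1-\sum_k y_{sk}x_{kt}\in L$. Since $p\neq 0$ and $RR(A)=0$, Proposition~\ref{description of RR} yields a partial morphism $\Phi\colon L\to\Pi_{k\to\omega} M_{n_k}(F)/Ker\rho_\omega$ with $\Phi(p)\neq 0$. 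Applying $\Phi$ entrywise defines a linear map $\widetilde\Phi\colon M_n(L)\to M_n(\Pi_{k\to\omega} M_{n_k}(F)/Ker\rho_\omega)\cong\Pi_{k\to\omega} M_{n\cdot n_k}(F)/Ker\rho_\omega$, and $\widetilde\Phi(I_n)=I$ because $\Phi(1)=1$.

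The crucial point is that the choice of $L$ guarantees $\Phi(x_{ik})\Phi(y_{kj})=\Phi(x_{ik}y_{kj})$ and $\Phi(y_{ik})\Phi(x_{kj})=\Phi(y_{ik}x_{kj})$ for all indices, whence $\widetilde\Phi(X)\widetilde\Phi(Y)=\widetilde\Phi(XY)=\widetilde\Phi(I_n)=I$ and $\widetilde\Phi(Y)\widetilde\Phi(X)=\widetilde\Phi(YX)$. By Proposition~\ref{stably finite} the algebra $M_n(\Pi_{k\to\omega} M_{n_k}(F)/Ker\rho_\omega)$ is directly finite, so $\widetilde\Phi(X)\widetilde\Phi(Y)=I$ forces $\widetilde\Phi(Y)\widetilde\Phi(X)=I$; hence $\widetilde\Phi(YX)=I$ and $\widetilde\Phi(I_n-YX)=0$, which reading off the $(s,t)$ entry gives $\Phi(p)=0$, a contradiction. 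I expect the only delicate point to be the bookkeeping of the second step: one must include in $L$ exactly those products needed so that the entrywise extension $\widetilde\Phi$ is multiplicative on $X\cdot Y$ and on $Y\cdot X$ (a partial morphism is only multiplicative on products that themselves lie in $L$), while keeping $L$ finite dimensional. Once this is arranged correctly, the direct finiteness of the ambient ultraproduct does the rest.
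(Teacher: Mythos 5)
Your proof is correct and follows essentially the same route as the paper's: both use the ultraproduct characterization of $RR(A)$ to produce a partial morphism that does not vanish on the offending element, lift that morphism to matrices over $A$, and then invoke the direct finiteness of the rank ultraproduct (Proposition~\ref{stably finite}) to reach a contradiction. The only difference is organizational --- the paper first proves $RR(M_m(A))=0$ via the identification $M_m(A)\simeq M_m(F)\otimes A$ and then handles direct finiteness abstractly, whereas you inline the entrywise extension; your bookkeeping of which products must be placed in $L$ is, if anything, more explicit than the paper's.
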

\begin{proof}
We simplify the original proof by the use of ultrafilters. First we prove that if $RR(A)=0$ then
$RR(M_m(A))=0$. Recall that $M_m(A)\simeq M_m(F)\otimes A$. Let
$v\in M_m(A)$, $v\neq 0$ and let $u\in A$ be a nonzero entry of
$v$. Consider $\{1,v\}\subset L\subset M_m(A)$ a finite
dimensional subspace. Then there exists $\{1,u\}\subset L_1\subset
A$ a finite dimensional subspace such that $L\subset M_m(F)\otimes
L_1$. As $u\notin RR(A)$ there exists $\Phi_1\colon L_1\to\Pi
M_{n_k}(F)/Ker \rho_\omega$ such that $\Phi(u)\neq 0$. Define
$\Phi\colon M_m(F)\otimes L_1\to\Pi M_{m\cdot n_k}(F)/Ker \rho_\omega$
by $\Phi(a\otimes p)=a\otimes\Phi_1(p)$. Then $\Phi(v)\neq 0$.

Consider now $x,y\in A$ such that $xy=1$. Assume that $yx\neq 1$
so $xy-yx\neq 0$. Let $L=Sp\{1,x,y,yx\}$. As $xy-yx\notin RR(A)$
there exists a partial morphism $\Phi:L\to\Pi M_{n_k}(F)/Ker
\rho_\omega$ such that $\Phi(xy-yx)\neq 0$. Now
$1=\Phi(xy)=\Phi(x)\Phi(y)$ and by Proposition \ref{stably
finite}, $\Pi M_{n_k}(F)/Ker \rho_\omega$ is directly finite. Thus,
$\Phi(y)\Phi(x)=1$. It follows that $\Phi(xy-yx)=1-1=0$
contradiction.
\end{proof}

\subsection{The sofic radical}

It is easy to see that if an algebra $A$ is linear sofic then
$RR(A)=0$. However, this condition is not sufficient. We modify the
definition of the rank radical to get a larger ideal
that will describe linear soficity. This can be done also for
groups and we first discuss this case as a warm up.

\begin{de}
The \emph{linear sofic radical} $LSR(G)$ of a group $G$ is defined as follows: $h\in
LSR(G)$ whenever for all group morphisms $\Theta\colon G\to\Pi
GL_{n_k}(\cz)/d_\omega$ we have $\Theta(h)=1$.
\end{de}

\begin{p}\label{linear sofic radical}
The linear sofic radical $LSR(G)$ is a normal subgroup of $G$. The
group $G/LSR(G)$ is linear sofic.
\end{p}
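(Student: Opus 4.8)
The plan is to prove the two assertions in turn, the first being essentially formal and the second requiring a combination construction. For normality, note that by definition $LSR(G)=\bigcap_\Theta\ker\Theta$, the intersection running over all group morphisms $\Theta\colon G\to\Pi_{k\to\omega}GL_{n_k}(\cz)/d_\omega$. Each $\ker\Theta$ is a normal subgroup of $G$, and an arbitrary intersection of normal subgroups is normal; this settles the first claim immediately, and in particular $LSR(G)$ is a subgroup.

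For the second claim I would produce a \emph{single} morphism of $G$ into a rank ultraproduct whose kernel is exactly $LSR(G)$. Since $G$ is countable, so is $G\setminus LSR(G)$; enumerate its elements as $g_1,g_2,\ldots$. For each $i$, the fact that $g_i\notin LSR(G)$ furnishes, by the very definition of the radical, a morphism $\Theta_i\colon G\to\Pi_{k\to\omega}GL_{n_{i,k}}(\cz)/d_\omega$ with $\Theta_i(g_i)\neq 1$, i.e. $d_\omega(\Theta_i(g_i),Id)>0$; fix representatives $\theta_{i,k}\colon G\to GL_{n_{i,k}}(\cz)$ with $\Theta_i=\Pi_{k\to\omega}\theta_{i,k}/d_\omega$.

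The heart of the argument is to amalgamate the countable family $\{\Theta_i\}_i$ into one morphism $\Theta$ with $\ker\Theta=\bigcap_i\ker\Theta_i$; this is the group-theoretic transcription of Proposition \ref{sum of representations}. Concretely, for each $k$ I would set $\phi_k(g)=\big(\bigoplus_{i=1}^{k}\theta_{i,k}(g)^{\oplus c_{i,k}}\big)\oplus Id$, where the multiplicities $c_{i,k}$ and the size of the trailing identity block are chosen so that the $i$-th summand occupies a proportion of the total dimension $m_k$ converging to the weight $2^{-i}$ along $\omega$ (the identity block absorbs the residual weight $2^{-k}\to 0$ and contributes nothing). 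Each $\phi_k$ is a morphism into $GL_{m_k}(\cz)$, hence so is $\Theta:=\Pi_{k\to\omega}\phi_k/d_\omega$, and the direct-sum formula for the normalized rank in Proposition \ref{prof of rk ultra} gives $d_\omega(\Theta(g),Id)=\sum_{i=1}^\infty 2^{-i}\,d_\omega(\Theta_i(g),Id)$. Consequently $\Theta(g)=1$ if and only if $\Theta_i(g)=1$ for every $i$, i.e. $\ker\Theta=\bigcap_i\ker\Theta_i$.

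By construction each $g\in G\setminus LSR(G)$ occurs as some $g_i$ with $\Theta_i(g_i)\neq 1$, while every $g\in LSR(G)$ lies in $\ker\Theta_i$ for all $i$; hence $\ker\Theta=LSR(G)$ precisely. Therefore $\Theta$ descends to an injective morphism $\overline\Theta\colon G/LSR(G)\to\Pi_{k\to\omega}GL_{m_k}(\cz)/d_\omega$, which is a linear sofic representation in the sense of Definition \ref{def:lsofic}, so $G/LSR(G)$ is linear sofic. The main obstacle is the bookkeeping in the amalgamation step: one must check that the multiplicities $c_{i,k}$ can be chosen to realize the weights $2^{-i}$ uniformly enough that the ultralimit yields the stated series and that the vanishing tail causes no loss. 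Once this is verified—exactly as in Proposition \ref{sum of representations}—the remainder of the argument is immediate.
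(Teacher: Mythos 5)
Your proof is correct and takes essentially the same approach as the paper: normality follows from $LSR(G)=\bigcap_\Theta \ker\Theta$, and the quotient is handled by amalgamating the countable family $\{\Theta_i\}_i$ into a single morphism whose rank distance to the identity is the weighted sum $\sum_i 2^{-i}d_\omega(\Theta_i(g),Id)$. The only cosmetic difference is that the paper passes to the group-algebra extensions $\widetilde\Theta_g$ and cites Proposition \ref{sum of representations} directly, whereas you unfold that construction explicitly at the group level.
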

\begin{proof}
It it easy to see from the definition that:
\[LSR(G)=\bigcap\{Ker\Theta\mid \Theta\colon G\to\Pi
GL_{n_k}(\cz)/d_\omega\mbox{ group morphism}\},\] so indeed
$LSR(G)$ is a normal subgroup of $G$.

For the second part of the proposition, for each $g\in G$,
$g\notin LSR(G)$ consider a morphism $\Theta_g\colon G\to\Pi
GL_{n_k}(\cz)/d_\omega$ such that $\Theta_g(g)\neq 1$. Then
$\{\widetilde\Theta_g\}_{g\in G\setminus LRS(G)}$ is a sequence of
unital morphisms of the group algebra and we apply Proposition
\ref{sum of representations} to get a morphism $\Theta\colon G\to\Pi
GL_{m_k}(\cz)/d_\omega$ such that $Ker\Theta=\bigcap_{g\in G\setminus
LRS(G)} Ker\Theta_g=LRS(G)$.
\end{proof}

This construction of linear sofic radical can be performed for
several other metric approximation properties for groups, like soficity,
weak soficity or hyperlinearity. We now introduce the sofic radical for algebras.

\begin{de}
The \emph{sofic radical} $SR(A)$ of an algebra is defined as
follows: if $A$ does not have almost finite dimensional
representations then $SR(A)=A$. Otherwise, let $p\in SR(A)$ if for
any $\delta>0$ there exists a finite dimensional subspace $L$ with
$\{1,p\}\subset L\subset A$ and there exists $n_\delta>0$ with the
following property: if $0<\vp<n_\delta$ and $\psi_{L,\vp}\colon L\to
End(V)$ is an $\vp$-almost representation then $\dim
Ran(\psi_{L,\vp}(p))<\delta\cdot \dim V$.
\end{de}

We now provide a characterization of the sofic radical in terms of
morphisms into ultraproducts.

\begin{p}\label{description of SR}
For any element $p$ of an algebra $p\in SR(A)$ if and only if for any unital
morphism $\Theta\colon A\to\Pi M_{n_k}(F)/Ker\rho_\omega$ we have
$\Theta(p)=0$.
\end{p}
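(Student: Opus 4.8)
The plan is to follow the proof of Proposition~\ref{description of RR} for the rank radical, the one structural change being that in the definition of $SR(A)$ the finite dimensional subspace $L$ is permitted to depend on $\delta$. This quantifier shift is exactly what replaces the \emph{partial} morphisms on a fixed $L$ (used for $RR$) by \emph{global} unital morphisms $\Theta\colon A\to\Pi M_{n_k}(F)/Ker\rho_\omega$ in the present statement. First I would dispose of the degenerate case. If $A$ has no almost finite dimensional representations, then $SR(A)=A$ by definition, so $p\in SR(A)$ for every $p$; on the other hand, by the proposition characterising almost finite dimensional representations there is then no unital morphism $\Theta\colon A\to\Pi M_{n_k}(F)/Ker\rho_\omega$ whatsoever, so the right-hand condition holds vacuously. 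Hence the equivalence is trivially true, and from now on I assume $A$ has almost finite dimensional representations.

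For the forward implication, suppose $p\in SR(A)$ and let $\Theta=\Pi_{k\to\omega}\theta_k/Ker\rho_\omega$ be any unital morphism; after replacing the representatives on the line $F\cdot 1$ we may assume $\theta_k(1)=Id$ for all $k$. Fix $\delta>0$ and use the definition of $SR(A)$ to obtain a finite dimensional subspace $L$ with $\{1,p\}\subset L\subset A$ and a threshold $n_\delta>0$; pick a basis $e_1,\dots,e_d$ of $L$. Since $\Theta$ is a morphism of \emph{all} of $A$, each matrix $\theta_k(e_ie_j)$ is defined and $\rho\big(\theta_k(e_i)\theta_k(e_j)-\theta_k(e_ie_j)\big)\to 0$ as $k\to\omega$ for the finitely many pairs $(i,j)$. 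Thus, for a fixed $\vp<n_\delta$ and $\omega$-almost every $k$, intersecting the large kernels of these $d^2$ differences produces a subspace $V_\vp\subset F^{n_k}$ of codimension $<\vp n_k$ on which $\theta_k(a)\theta_k(b)=\theta_k(ab)$ for all $a,b\in L$ (by bilinearity and the global linearity of $\theta_k$); together with $\theta_k(1)=Id$ this exhibits $\theta_k|_L$ as an $\vp$-almost representation of $L$. The defining property of $SR(A)$ then forces $\rho(\theta_k(p))<\delta$ for $\omega$-almost every $k$, whence $\rho_\omega(\Theta(p))\leqslant\delta$. Letting $\delta\to 0$ gives $\Theta(p)=0$.

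For the converse I argue contrapositively. Assume $p\notin SR(A)$; negating the definition yields a constant $\delta_0>0$ such that for \emph{every} finite dimensional $L$ with $\{1,p\}\subset L\subset A$ and every $n>0$ there exist $\vp<n$ and an $\vp$-almost representation $\psi_{L,\vp}\colon L\to End(V)$ with $\rho(\psi_{L,\vp}(p))\geqslant\delta_0$. Write $A=\bigcup_i L_i$ as an increasing union of finite dimensional subspaces with $\{1,p\}\subset L_i$, fix a sequence $\vp_i\to 0$, and for each $i$ apply the above with $L=L_i$ and $n=\vp_i$ to get an $\vp_i'$-almost representation $\psi_i\colon L_i\to End(V_i)$ (with $\vp_i'<\vp_i$) satisfying $\rho(\psi_i(p))\geqslant\delta_0$. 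Putting $n_i=\dim_F V_i$ and $\Theta=\Pi_{i\to\omega}\psi_i/Ker\rho_\omega$ (for each $a\in A$ one has $a\in L_i$ for all large $i$, so $\psi_i(a)$ is defined $\omega$-almost everywhere), the identities $A=\bigcup_i L_i$ and $\vp_i\to 0$ make $\Theta$ a \emph{unital} morphism, exactly as in the case $RR(A)=A$ of Proposition~\ref{description of RR}: for fixed $a,b\in A$ one has $a,b,ab\in L_i$ for large $i$ and the multiplicativity defect of $\psi_i$ is at most $\vp_i'\to 0$. Finally $\rho_\omega(\Theta(p))=\lim_{i\to\omega}\rho(\psi_i(p))\geqslant\delta_0>0$, so $\Theta(p)\neq 0$, as required.

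The step I expect to require the most care is the dictionary between morphisms into the ultraproduct and sequences of $\vp$-almost representations. In the forward direction this is the component-wise extraction of honest $\vp$-almost representations from $\Theta$: here the normalisation $\theta_k(1)=Id$ and, above all, the use of the global domain of $\theta_k$ (so that $\theta_k(e_ie_j)$ makes sense for products falling outside $L$) are what allow the finite intersection of kernels to certify the $\vp$-almost representation property on all of $L$ at once. In the reverse direction it is the verification that the assembled $\Theta$ is genuinely unital and multiplicative, which rests on the exhaustion $A=\bigcup_i L_i$ together with $\vp_i\to 0$. Both are close analogues of steps already carried out for the rank radical, so beyond this bookkeeping no essentially new difficulty arises.
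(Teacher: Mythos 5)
Your argument is correct and follows essentially the same route as the paper's proof: in the forward direction you fix $\delta$, extract from the global morphism $\Theta$ an $\vp$-almost representation of the subspace $L$ furnished by the definition of $SR(A)$ for $\omega$-almost every $k$, and conclude $\rho_\omega(\Theta(p))\leqslant\delta$ for all $\delta$; in the converse you negate the definition, assemble the resulting $\vp_i$-almost representations along an exhaustion $A=\bigcup_i L_i$ into a unital morphism, and read off $\rho_\omega(\Theta(p))\geqslant\delta_0>0$. Your extra care with the degenerate case and with normalising $\theta_k(1)=Id$ and intersecting kernels to certify the $\vp$-almost representation property is sound bookkeeping that the paper leaves implicit.
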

\begin{proof}
Let $p\in SR(A)$. Fix $\delta>0$ and let $L$ and $n_\delta>0$ as
in the definition of the sofic radical. Let $\Theta:A\to\Pi
M_{n_k}(F)/Ker\rho_\omega$ be a unital morphism,
$\Theta=\Pi\theta_k/Ker\rho_\omega$. Let $0<\vp<n_\delta$. Then
there exists $H\in\omega$ such that $\theta_k\colon L\to M_{n_k}(F)$ is
an $\vp$-almost representation of $L$ for any $k\in H$. Then $\dim
Ran(\theta_k(p))<\delta n_k$, or with our notation
$\rho(\theta_k(p))<\delta$ for any $k\in H$. This implies that
$\rho_\omega(\Theta(p))<\delta$. As $\delta$ was arbitrary it
follows that $\rho_\omega(\Theta(p))=0$ so $\Theta(p)=0$.

Suppose now $p\notin SR(A)$. Then there exists $\delta>0$ such
that for any finite dimensional subspace $L$ with $\{1,p\}\subset
L\subset A$ and any $n>0$ there exists $0<\vp<n$ and
$\psi_{L,\vp}\colon L\to End(V)$ an $\vp$-almost representation with
$\dim Ran(\psi_{L,\vp}(p))\geqslant\delta\cdot \dim V$. This is
equivalent to $\rho(\psi_{L,\vp}(p))\geqslant\delta$.

Let $A=\bigcup_kL_k$ where $(L_k)_k$ is an increasing sequence of
finite dimensional subspaces of $A$ such that $\{1,p\}\in L_k$. We
can find a decreasing sequence $(\vp_k)_k$ converging to $0$
and $\vp_k$-almost
representations $\psi_{L_k,\vp_k}\colon L_k\to End(V_k)$  with $\dim
\rho(\psi_{L_k,\vp_k}(p))\geqslant\delta$.

Define $\Theta=\Pi\psi_{L_k,\vp_k}/Ker\rho_\omega$. Since
$A=\cup_kL_k$ and $\vp_k\to 0$, $\Theta$ is a unital morphism.
Also $\rho_\omega(\Theta(p))\geqslant\delta$ so $\Theta(p)\neq 0$.
\end{proof}

\begin{cor}
The sofic radical is an ideal. Moreover, $SR(A/SR(A))=0$.
\end{cor}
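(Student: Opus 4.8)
The plan is to exploit the clean description of the sofic radical furnished by Proposition~\ref{description of SR}, which I read as the identity
\[SR(A)=\bigcap\{Ker\,\Theta\mid \Theta\colon A\to\Pi M_{n_k}(F)/Ker\rho_\omega \text{ a unital morphism}\}\]
whenever $A$ has almost finite dimensional representations, and $SR(A)=A$ otherwise. Both assertions of the corollary then follow almost formally, in parallel with the treatment of the linear sofic radical $LSR(G)$ in Proposition~\ref{linear sofic radical}; the point is that, in contrast with the rank radical where only partial morphisms of subspaces are available, here we have genuine morphisms of the whole algebra, so the factoring argument becomes transparent.

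For the first assertion I would split into two cases. If $A$ has no almost finite dimensional representations, then $SR(A)=A$ is trivially a two-sided ideal. Otherwise each $Ker\,\Theta$ is the kernel of an algebra morphism, hence a two-sided ideal of $A$, and $SR(A)$ is their intersection; an intersection of two-sided ideals is again a two-sided ideal, so $SR(A)$ is an ideal.

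For the second assertion, write $f\colon A\to A/SR(A)$ for the canonical projection and let $v\in A/SR(A)$ with $v\neq 0$; choose $u\in A$ with $f(u)=v$, so that $u\notin SR(A)$. By Proposition~\ref{description of SR} there is a unital morphism $\Theta\colon A\to\Pi M_{n_k}(F)/Ker\rho_\omega$ with $\Theta(u)\neq 0$. Since $SR(A)\subseteq Ker\,\Theta$ for every such $\Theta$, this morphism vanishes on $SR(A)$ and therefore factors through the quotient: there is a unital morphism $\Psi\colon A/SR(A)\to\Pi M_{n_k}(F)/Ker\rho_\omega$ with $\Psi\circ f=\Theta$, whence $\Psi(v)=\Theta(u)\neq 0$. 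Applying Proposition~\ref{description of SR} now to the algebra $A/SR(A)$, the existence of a unital morphism not killing $v$ shows $v\notin SR(A/SR(A))$. As $v$ was an arbitrary nonzero element, $SR(A/SR(A))=0$.

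The main point requiring care is the degenerate case: I must check that whenever $A/SR(A)\neq 0$ it genuinely admits almost finite dimensional representations, so that the characterization of Proposition~\ref{description of SR} is available for it rather than the default value $SR=$ everything. This is exactly what the construction above supplies, since producing a single unital morphism $\Psi$ with $\Psi(v)\neq 0$ already witnesses that $A/SR(A)$ has almost finite dimensional representations. The remaining verifications are routine: that $\Psi$ is unital is immediate from $\Theta(1)=1$ and $f(1)=1$, and the factoring is the usual universal property of the quotient by an ideal contained in the kernel.
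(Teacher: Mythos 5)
Your proposal is correct and follows essentially the same route as the paper: identify $SR(A)$ with $\bigcap Ker\,\Theta$ over all unital morphisms $\Theta\colon A\to\Pi M_{n_k}(F)/Ker\rho_\omega$ (so it is an intersection of two-sided ideals), and for the second claim lift a nonzero $q\in A/SR(A)$ to $p\notin SR(A)$, pick $\Theta$ with $\Theta(p)\neq 0$, and factor it through the quotient since $SR(A)\subseteq Ker\,\Theta$. Your extra remark about the degenerate case is a welcome bit of care the paper leaves implicit, but it does not change the argument.
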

\begin{proof}
By the previous proposition,
\[SR(A)=\bigcap\{Ker\Theta\mid\Theta:A\to\Pi
M_{n_k}(F)/Ker\rho_\omega,\mbox{ unital morphism}\}\] Let now
$q\in A/SR(A)$ and $p\in A$ such that $q=\hat{p}$. Suppose that
$q\neq 0$ so that $p\notin SR(A)$. Then there exists a unital
morphism $\Theta\colon A\to\Pi M_{n_k}(F)/Ker\rho_\omega$ such that
$\Theta(p)\neq 0$. Also $SR(A)\subset Ker\Theta$. So we can factor
$\Theta$ by $SR(A)$ to get a unital morphism
$\widetilde{\Theta}\colon A/SR(A)\to\Pi M_{n_k}(F)/Ker\rho_\omega$ with
$\widetilde{\Theta}(q)\neq 0$. It follows that $q\notin
SR(A/SR(A))$.
\end{proof}

\begin{te}
An algebra $A$ is linear sofic if and only if $SR(A)=0$.
\end{te}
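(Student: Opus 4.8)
The plan is to prove both implications through the characterization of the sofic radical given in Proposition~\ref{description of SR}, which identifies $p\in SR(A)$ with the vanishing $\Theta(p)=0$ for every unital morphism $\Theta\colon A\to\Pi M_{n_k}(F)/Ker\rho_\omega$; equivalently, $SR(A)=\bigcap_\Theta Ker\Theta$, the intersection ranging over all such unital morphisms. The other ingredient is the combination device of Proposition~\ref{sum of representations}.

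For the forward implication, suppose $A$ is linear sofic. By Definition~\ref{deflinearsofic} there is an injective unital morphism $\Theta_0\colon A\to\Pi M_{n_k}(F)/Ker\rho_\omega$, so $Ker\Theta_0=0$. Since $SR(A)$ is contained in $Ker\Theta$ for \emph{every} unital morphism $\Theta$ by Proposition~\ref{description of SR}, in particular $SR(A)\subseteq Ker\Theta_0=0$, whence $SR(A)=0$.

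For the reverse implication, suppose $SR(A)=0$, i.e.\ $\bigcap_\Theta Ker\Theta=0$ over all unital morphisms. The strategy is to build a single injective unital morphism from this (possibly uncountable) separating family: first extract a \emph{countable} subfamily $\{\Theta^i\}_i$ whose kernels still intersect in $0$, then apply Proposition~\ref{sum of representations}. The latter produces a unital morphism $\Phi$ with $\Phi(x)=0$ if and only if $\Theta^i(x)=0$ for all $i$, so that $Ker\Phi=\bigcap_i Ker\Theta^i=0$; this $\Phi$ is then the required injective linear sofic representation, and $A$ is linear sofic.

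The extraction of a countable subfamily is the crux of the argument. Since $A$ is countably generated (as demanded by the definition of linear soficity), it has countable dimension over $F$: fix an $F$-basis $\{e_i\}_i$ and set $V_n=\mathrm{span}\{e_1,\dots,e_n\}$, so that $A=\bigcup_n V_n$. For each fixed $n$, the subspaces $Ker\Theta\cap V_n$ of the finite-dimensional space $V_n$ satisfy $\bigcap_\Theta(Ker\Theta\cap V_n)=(\bigcap_\Theta Ker\Theta)\cap V_n=0$, and since $V_n$ is finite-dimensional a finite subfamily already has trivial intersection; let $S_n$ index such a finite subfamily. Then $\{\Theta^i\}_i:=\bigcup_n\{\Theta_\alpha:\alpha\in S_n\}$ is countable, and any nonzero $x\in A$ lies in some $V_n$ and hence is not annihilated by all morphisms indexed by $S_n$; therefore $\bigcap_i Ker\Theta^i=0$, as needed. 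I expect the only genuine subtlety to be precisely this passage from an arbitrary separating family to a countable one, which is resolved by the finite-dimensional exhaustion $A=\bigcup_n V_n$; everything else follows formally from Propositions~\ref{description of SR} and~\ref{sum of representations} (using that the chosen morphisms are unital, so that the resulting $\Phi$ is unital as well).
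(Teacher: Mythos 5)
Your proof is correct and follows essentially the same route as the paper: the forward direction via Proposition~\ref{description of SR}, and the reverse direction by extracting a countable separating family of unital morphisms from a finite-dimensional exhaustion of the countable-dimensional algebra $A$ and then combining them with Proposition~\ref{sum of representations}. The only difference is cosmetic: where you select, for each $V_n$, a finite subfamily of kernels with trivial intersection by a dimension-drop argument, the paper runs an explicit induction, adjoining at step $s$ a single morphism $\Psi_{y_s}$ (via direct sum) that kills the at-most-one-dimensional residual kernel on $Sp\{x_1,\ldots,x_s\}$ --- both devices serve the identical purpose of reducing the separating family to a countable one before applying the weighted-sum construction.
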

\begin{proof}
If $A$ is linear sofic then there exists an injective morphism
$\Theta\colon A\to\Pi M_{n_k}(F)/Ker\rho_\omega$. So $Ker\Theta=0$ and
then $SR(A)=0$.

Let $A$ be an algebra such that $SR(A)=0$. It follows that for any
$p\neq 0$ there exists a unital morphism $\Psi_p\colon A\to\Pi
M_{n_{k,p}}(F)/Ker\rho_\omega$ such that $\Psi_p(p)\neq 0$.

Let $\{x_i\}_{i\in\nz}$ be a basis for $A$ as a vector space over
$F$. For $s\in\nz$, we shall inductively construct $\Phi_s\colon A\to\Pi
M_{n_{k,s}}(F)/Ker\rho_\omega$ such that $Ker\Phi_s\cap
Sp\{x_1,\ldots,x_s\}=0$.

Define $\Phi_1=\Psi_{x_1}$. Assume now by induction that we have
$\Phi_{s-1}$ unital morphism such that $Ker\Phi_{s-1}\cap
Sp\{x_1,\ldots,x_{s-1}\}=0$. Then $\dim(Ker\Phi_{s-1}\cap
Sp\{x_1,\ldots,x_{s}\})\leqslant 1$. If this space is trivial
define $\Phi_s=\Phi_{s-1}$. Otherwise, let $y_s\in
Ker\Phi_{s-1}\cap Sp\{x_1,\ldots,x_{s}\}$, $y_s\neq 0$ and define
$\Phi_s=\Phi_{s-1}\oplus\Psi_{y_s}$ (see Proposition \ref{prof of
rk ultra}). If $z\in Ker\Phi_s\cap Sp\{x_1,\ldots,x_s\}$ then
$z\in Ker\Phi_{s-1}\cap Sp\{x_1,\ldots,x_{s}\}$. It follows that
$z\in Sp\{y_s\}$. But also $z\in Ker\Psi_{y_s}$ so $z=0$.

Using arguments similar to the proof of Theorem \ref{grouptoalgebra}
(see also Proposition \ref{sum of representations}), we shall
construct a unital morphism $\Theta$ such that
$Ker\Theta=\cap_sKer\Phi_s$.

First we bring $\Phi_s$ into the same sequence of matrix
dimensions. Define $n_k=n_{k,1}n_{k,2}\ldots n_{k,k}$. Replace
$\Phi_s$ by an amplification to get $\Phi_s:A\to\Pi
M_{n_k}(F)/Ker\rho_\omega$.

Let now $\phi_{s,k}:A\to M_{n_k}$ be such that
$\Phi_s=\Pi\phi_{s,k}/Ker\rho_\omega$. Define $\theta_k:A\to
M_{2^kn_k}$ by:
\[\theta_k=(\phi_{1,k}\otimes Id_{2^{k-1}})\oplus(\phi_{2,k}\otimes
Id_{2^{k-2}})\oplus\ldots\oplus(\phi_{k,k}\otimes Id_{2^0})\oplus
Id_{n_k},\] and $\Theta=\Pi\theta_k/Ker\rho_\omega$. As in the
proof of Theorem \ref{grouptoalgebra}
$\rho_\omega(\Theta(p))=\sum_{s=1}^\infty\frac1{2^s}\rho_\omega(\Phi_s(p))$.
It follows that indeed $Ker\Theta=\bigcap_sKer\Phi_s$. As
$A=\bigcup_sSp\{x_1,\ldots,x_s\}$ and $Ker\Phi_s\bigcap
Sp\{x_1,\ldots,x_s\}=0$ we get $Ker\Theta=0$.
\end{proof}

\begin{cor}
A simple unital algebra with almost finite dimensional
representations is linear sofic.
\end{cor}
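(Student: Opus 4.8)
The plan is to combine the two structural facts we have just established about the sofic radical: that $SR(A)$ is always a two-sided ideal, and that an algebra is linear sofic precisely when its sofic radical vanishes. Since $A$ is simple, its only ideals are $0$ and $A$, so $SR(A)$ must be one of these two. The entire argument therefore reduces to ruling out the possibility $SR(A)=A$.

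To exclude $SR(A)=A$, I would first invoke the hypothesis that $A$ has almost finite dimensional representations. By the earlier proposition characterizing this property, this yields a unital morphism $\Theta\colon A\to\Pi M_{n_k}(F)/Ker\rho_\omega$. Since $\Theta$ is unital, $\Theta(1)$ equals the identity of the ultraproduct, and $\rho_\omega(1)=1$, so $\Theta(1)\neq 0$. By Proposition~\ref{description of SR}, the membership $1\in SR(A)$ would force $\Theta(1)=0$ for \emph{every} unital morphism; as this fails for our $\Theta$, we conclude that $1\notin SR(A)$, and hence $SR(A)\neq A$.

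Putting the pieces together, $SR(A)$ is a proper ideal of the simple algebra $A$, whence $SR(A)=0$. The theorem asserting that $A$ is linear sofic if and only if $SR(A)=0$ then delivers the conclusion. There is no genuine obstacle in this proof; the only point requiring care is the verification that the unit escapes the sofic radical, which is exactly where the almost finite dimensionality hypothesis enters. Indeed, without that hypothesis one has $SR(A)=A$ by definition, so simplicity alone would give no information, and the role of the hypothesis is precisely to guarantee the existence of at least one unital morphism witnessing $SR(A)\subsetneq A$.
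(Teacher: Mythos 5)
Your proof is correct and follows essentially the same route as the paper: use the existence of a unital morphism (from the almost finite dimensional representations hypothesis) together with Proposition~\ref{description of SR} to see that $1\notin SR(A)$, then invoke simplicity and the fact that $SR(A)$ is an ideal to conclude $SR(A)=0$, hence linear soficity. The paper's version is just a terser statement of the same argument.
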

\begin{proof}
If $A$ has almost finite dimensional representations then $1\notin
SR(A)$. As $SR(A)$ is an ideal and $A$ is simple we get $SR(A)=0$.
By the previous theorem $A$ is linear sofic.
\end{proof}

\begin{p}
An amenable algebra without zero divisors is linear sofic.
\end{p}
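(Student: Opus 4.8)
The plan is to construct an \emph{explicit} injective unital morphism from $A$ into the rank ultraproduct directly out of a Følner sequence, and to use the absence of zero divisors to force every nonzero element to have image of full normalized rank; by Definition \ref{deflinearsofic} this is exactly what is required. First I would invoke amenability of $A$ \cite{El} to obtain a Følner exhaustion: an increasing sequence of finite-dimensional subspaces $F_k\subset A$ with $1\in F_k$, $\bigcup_k F_k=A$, and $\dim(aF_k+F_k)/\dim F_k\to 1$ for every $a\in A$. I would fix a linear projection $P_k\colon A\to F_k$ and define the truncated left regular representation $\psi_k\colon A\to End_F(F_k)$ by $\psi_k(a)=P_k\circ L_a|_{F_k}$, where $L_a$ denotes left multiplication by $a$ (and $\psi_k(a)=0$ when $a\notin F_k$). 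For a fixed pair $a,b$ the Følner condition guarantees $bv\in F_k$, and hence $\psi_k(a)\psi_k(b)(v)=P_k(abv)=\psi_k(ab)(v)$, for all $v$ in a subspace of codimension $o(\dim F_k)$; thus $rk(\psi_k(a)\psi_k(b)-\psi_k(ab))=o(\dim F_k)$. Since $\bigcup_k F_k=A$, the family $(\psi_k)_k$ is $\omega$-eventually defined on each element and asymptotically multiplicative, so $\Theta:=\Pi_{k\to\omega}\psi_k/Ker\rho_\omega$ is a well-defined unital morphism $A\to\Pi_{k\to\omega}M_{n_k}(F)/Ker\rho_\omega$ with $n_k=\dim F_k$ (this is the passage between almost finite dimensional representations and morphisms into the ultraproduct recorded above).

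Next I would bring in the hypothesis that $A$ has no zero divisors. For a fixed $p\neq 0$, left multiplication $L_p$ is injective, so $\dim(pF_k)=\dim F_k$ and therefore $\dim(pF_k+F_k)=2\dim F_k-\dim(pF_k\cap F_k)$. The Følner estimate $\dim(pF_k+F_k)\leqslant(1+\varepsilon_k)\dim F_k$ with $\varepsilon_k\to 0$ then yields $\dim(pF_k\cap F_k)\geqslant(1-\varepsilon_k)\dim F_k$. On the subspace $\{v\in F_k:pv\in F_k\}$ the operator $\psi_k(p)$ acts as $v\mapsto pv$ and is injective, so its image contains $pF_k\cap F_k$; hence $\rho(\psi_k(p))\geqslant 1-\varepsilon_k$. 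Passing to the ultralimit gives $\rho_\omega(\Theta(p))=1$, so $\Theta(p)\neq 0$. As $p\neq 0$ was arbitrary, $\Theta$ is injective and $A$ is linear sofic by Definition \ref{deflinearsofic}; equivalently, the same computation shows $\Theta(p)\neq 0$ for every $p\neq 0$, so $SR(A)=0$ by Proposition \ref{description of SR}.

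The main obstacle is the first step: one must check carefully that the truncated regular representation really is asymptotically multiplicative and unital, so that it descends to an honest algebra morphism on the quotient by $Ker\rho_\omega$, and that the Følner condition --- typically phrased for a generating set --- propagates to every single element $p$ with the defect tending to $0$ along the sequence. This is where the bookkeeping with the projections $P_k$ and the subspaces $\{v:pv\in F_k\}$ has to be handled with care (and, in the degenerate case where $A$ is finite dimensional, one simply notes that a domain that is finite dimensional over $F$ is a division algebra, hence trivially linear sofic). Once the representation is in place, the no-zero-divisor hypothesis enters cleanly through the single identity $\dim(pF_k)=\dim F_k$, which is exactly what converts the Følner bound into a lower bound on the rank.
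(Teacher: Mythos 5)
Your proof is correct and follows essentially the same route as the paper: truncate the left regular representation to Følner subspaces to get an almost representation, and use the absence of zero divisors (injectivity of $L_p$) to show $\rho(\psi_k(p))\geqslant 1-\varepsilon_k$, hence $\rho_\omega(\Theta(p))=1$ and $\Theta$ is injective. The only cosmetic difference is that the paper keeps two sequences — an exhaustion $(L_k)$ of $A$ and separate Følner subspaces $S_k$ for $(L_k,\vp_k)$ with $\dim L_k\cdot\vp_k\to 0$ — rather than asking the Følner subspaces themselves to be increasing and exhausting, which your argument does not actually need.
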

\begin{proof}
Let $A$ be such an algebra. Consider $(L_k)_k$ an increasing
sequence of finite dimensional subspaces of $A$ such that $1\in
L_k$ and $A=\bigcup_kL_k$. Consider also $(\vp_k)_k$ a sequence of
strictly positive real numbers such that $\upsilon_k=\dim
L_k\cdot\vp_k\to_k 0$. For any $k$, by the definition of
amenability, there exists $S_k$ a finite dimensional subspace of
$A$ such that $\dim (aS_k\cap S_k)>(1-\vp_k)\cdot \dim S_k$ for any
$a\in L_k$. Then we can construct a linear map $\phi_k(a):S_k\to
S_k$ such that $\phi_k(a)$ is the left multiplication on a
subspace of dimension $(1-\vp_k)\cdot \dim S_k$. This implies that
$\phi_k:L_k\to End(S_k)$ is a $\upsilon_k$-almost representation
of $L_k$. As $\phi_k(a)$ is the left multiplication on a subspace
of dimension $(1-\vp_k)\cdot \dim S_k$ and $A$ has no zero divisors
it follows that $\rho(\phi_k(a))\geqslant 1-\vp_k$ for any $a\in
L_k$, $a\neq 0$.

Let $n_k=\dim S_k$ and construct $\Theta:A\to\Pi M_{n_k}/Ker
\rho_\omega$ by $\Theta=\Pi\phi_k/Ker\rho_\omega$. By construction
$\rho_\omega(\Theta(a))=1$ for $a\neq 0$. It follows that $\Theta$
is injective, so $A$ is linear sofic.
\end{proof}

The hypothesis of non-existence of zero divisors is too strong for
this proposition to hold. We can construct  a unital
morphism $\Theta$ for any amenable algebra. Therefore, amenable algebras have
almost finite dimensional representations. The non-existence of
zero divisors implies $\rho_\omega(\Theta(a))=1$, but we only use
$\rho_\omega(\Theta(a))>0$ for the injectivity of~$\Theta$.

It is easy to construct almost finite dimensional representations
for $LEF$ algebras (that is, algebras locally embeddable into
finite dimensional ones~\cite{VG:lef,Zi}) as also noticed in \cite{El}. In particular, any
amenable or $LEF$ algebra that is also simple is linear sofic.

There exist algebras that are not stably finite (see, for instance, Example \ref{ex:notsf} below). In particular,
such algebras are \emph{not linear sofic}. Combining Propositions
\ref{description of RR} and \ref{description of SR} we immediately
see that $RR(A)\subset SR(A)$. If $RR(A)\varsubsetneq SR(A)$ then
$A/RR(A)$ will be a stably finite non-linear sofic algebra. Such
algebras seem difficult to find as counterexamples to soficity in
general proved to be elusive.

\begin{ex}\label{ex:notsf}
Let us present an example of an algebra that is directly finite but
it is not stably finite. This construction is due to Sheperdson.
Let $A$ be the unital algebra over $F$ generated by elements
$\{x,y,z,t,a,b,c,d\}$ and relations
$\{xa+yc=1;xb+yd=0;za+tc=0;zb+td=1\}$. These relations are chosen
such that:
\[\left(%
\begin{array}{cc}
  x & y \\
  z & t \\
\end{array}%
\right)\left(%
\begin{array}{cc}
  a & b \\
  c & d \\
\end{array}%
\right)=Id_2\] Then $A$ is directly finite but it is not stably
finite. Details can be found in \cite[Exercise 1.18, p. 11]{La}. \end{ex}

\begin{ex}
In \cite{Cor} Cornulier constructed a sofic group that is not
initially sub-amenable. Its group algebra is linear sofic by
Theorem \ref{mainresult} and Proposition \ref{sofic implies
lsofic}. On the other hand, this algebra is neither LEF by Theorem 1 of
\cite{Zi}, nor amenable.
\end{ex}

\subsection{Computations of the sofic radical}

In this section, we prove that the rank radical is equal to the
sofic radical for group algebras. We also provide a
characterization of the sofic radical for a group algebra.

\begin{p}
Let $G$ be a countable group and let $LSR(G)$ be its linear sofic
radical. Denote by $f\colon G\to G/LSR(G)$ the canonical projection and
extend this morphism to group algebras: $\tilde f\colon \cz G\to
\cz(G/LSR_F(G))$. Then $SR(\cz G)=Ker\tilde f$.
\end{p}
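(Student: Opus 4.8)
The plan is to establish the two inclusions separately, using in each direction the ultraproduct descriptions of the two radicals. Recall that by Proposition~\ref{description of SR} an element $p\in\cz G$ lies in $SR(\cz G)$ if and only if $\Theta(p)=0$ for every unital morphism $\Theta\colon\cz G\to\Pi M_{n_k}(\cz)/Ker\rho_\omega$, and that by Proposition~\ref{linear sofic radical} we have $LSR(G)=\bigcap\{Ker\,\Theta\mid\Theta\colon G\to\Pi GL_{n_k}(\cz)/d_\omega\}$. A preliminary observation threads the two pictures together: every unital algebra morphism $\Theta\colon\cz G\to\Pi M_{n_k}(\cz)/Ker\rho_\omega$ restricts to a group morphism on $G$. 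Indeed, each $u_g$ is invertible in $\cz G$, so $\Theta(u_g)$ is invertible in the ultraproduct, and by Proposition~\ref{invertible in ultraproduct} the unit group $\unit(\Pi M_{n_k}(\cz)/Ker\rho_\omega)$ is exactly $\Pi GL_{n_k}(\cz)/d_\omega$; hence $g\mapsto\Theta(u_g)$ is a group morphism $G\to\Pi GL_{n_k}(\cz)/d_\omega$. In particular $\Theta(u_h)=1$ for every $h\in LSR(G)$, by definition of $LSR(G)$.

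For the inclusion $Ker\,\tilde f\subseteq SR(\cz G)$, I would take $x=\sum_i a_iu_{g_i}\in Ker\,\tilde f$ and an arbitrary unital morphism $\Theta$ as above. Group the $g_i$ by their cosets modulo $LSR(G)$ and pick a representative $g_c$ in each occurring coset $c$. Since $\tilde f(x)=\sum_i a_iu_{f(g_i)}$ vanishes in $\cz(G/LSR(G))$ and distinct cosets give linearly independent basis vectors there, the coset-wise coefficient sums must vanish: $\sum_{g_i\in c}a_i=0$ for each $c$. Writing $g_i=g_ch_i$ with $h_i\in LSR(G)$ gives $\Theta(u_{g_i})=\Theta(u_{g_c})\Theta(u_{h_i})=\Theta(u_{g_c})$ by the preliminary observation, so $\Theta(x)=\sum_c\Theta(u_{g_c})\big(\sum_{g_i\in c}a_i\big)=0$. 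As $\Theta$ was arbitrary, $x\in SR(\cz G)$.

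For the reverse inclusion $SR(\cz G)\subseteq Ker\,\tilde f$ I would argue contrapositively: given $x\notin Ker\,\tilde f$, the element $\tilde f(x)$ is nonzero in $\cz(G/LSR(G))$, and I must exhibit a unital morphism of $\cz G$ not annihilating $x$. Here I invoke that $G/LSR(G)$ is linear sofic (Proposition~\ref{linear sofic radical}), so by Theorem~\ref{mainresult} its group algebra $\cz(G/LSR(G))$ is a linear sofic algebra; that is, there is an injective unital morphism $\Psi\colon\cz(G/LSR(G))\to\Pi M_{m_k}(\cz)/Ker\rho_\omega$. Then $\Theta:=\Psi\circ\tilde f$ is a unital morphism of $\cz G$ with $\Theta(x)=\Psi(\tilde f(x))\neq 0$ by injectivity of $\Psi$, so $x\notin SR(\cz G)$. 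Combining the two inclusions yields $SR(\cz G)=Ker\,\tilde f$.

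I expect the real weight to rest on this second inclusion, since it relies on Theorem~\ref{mainresult} being applicable to the quotient $G/LSR(G)$ — that is, on the nontrivial passage from linear soficity of a group to linear soficity of its group algebra, here specialized to $F=\cz$. By contrast, the first inclusion is essentially bookkeeping once one records, via Proposition~\ref{invertible in ultraproduct}, that unital morphisms of $\cz G$ restrict to group morphisms that must kill $LSR(G)$; the only point to watch there is the correct coset decomposition of a general element of $Ker\,\tilde f$.
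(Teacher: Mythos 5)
Your proposal is correct and follows essentially the same route as the paper: both inclusions are obtained exactly as in the paper's proof, using Proposition~\ref{invertible in ultraproduct} to restrict a unital morphism of $\cz G$ to a group morphism killing $LSR(G)$ for one direction, and composing $\tilde f$ with an injective representation of $\cz(G/LSR(G))$ supplied by Proposition~\ref{linear sofic radical} and Theorem~\ref{mainresult} for the other. The only difference is that you spell out the coset-decomposition bookkeeping behind the paper's one-line claim that $Ker\,\tilde f\subset Ker\,\Psi$, which is a harmless (and welcome) elaboration.
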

\begin{proof}
Let $\Psi\colon\cz G\to\Pi M_{n_k}/Ker \rho_\omega$ be a unital
morphism. By Proposition \ref{invertible in ultraproduct}, $\Psi$
can be restricted to a morphism of the group, so $\Psi(u_g)=1$ if
$g\in LSR(G)$. Then $f(g)=f(h)$ implies $\Psi(u_g)=\Psi(u_h)$. Now
we can see that $Ker\tilde f\subset Ker\Psi$. As $\Psi$ was
arbitrary, we get $Ker\tilde f\subset SR(\cz G)$.

The group $G/LSR(G)$ is linear sofic, so by Theorem
\ref{mainresult} there exists $\Theta\colon\cz(G/LSR(G))\to\Pi
M_{n_k}/Ker \rho_\omega$ an injective unital morphism. Then
$\Theta\circ\tilde f\colon\cz G\to\Pi M_{n_k}/Ker \rho_\omega$ is a
unital morphism such that $Ker\Theta\circ\tilde f=Ker\tilde f$. It
follows that $SR(\cz G)\subset Ker\tilde f$.
\end{proof}

\begin{te}
For any group $G$ we have $SR(\cz G)=RR(\cz G)$.
\end{te}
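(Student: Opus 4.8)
The plan is to prove both inclusions, $RR(\cz G)\subseteq SR(\cz G)$ and $SR(\cz G)\subseteq RR(\cz G)$. The first inclusion is immediate and holds for any algebra: comparing Propositions~\ref{description of RR} and~\ref{description of SR}, an element $p\in RR(A)$ is killed by every \emph{partial} morphism of some fixed finite dimensional $L\ni p$, whereas $p\in SR(A)$ is killed by every \emph{global} unital morphism $\Theta\colon A\to\Pi M_{n_k}(F)/Ker\rho_\omega$. Since the restriction of a global morphism to any finite dimensional subspace $L$ is a partial morphism of $L$, membership in $RR(A)$ forces $\Theta(p)=0$ for all global $\Theta$, hence $RR(A)\subseteq SR(A)$. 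So the whole content is the reverse inclusion, specific to group algebras.

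For $SR(\cz G)\subseteq RR(\cz G)$ I would argue by contrapositive: take $p\notin RR(\cz G)$ and produce a global unital morphism $\Theta\colon\cz G\to\Pi M_{n_k}(\cz)/Ker\rho_\omega$ with $\Theta(p)\neq 0$, which by Proposition~\ref{description of SR} shows $p\notin SR(\cz G)$. By the characterization in Proposition~\ref{description of RR}, $p\notin RR(\cz G)$ means that for \emph{every} finite dimensional $L$ with $\{1,p\}\subset L\subset\cz G$ there exists a \emph{partial} morphism $\Phi_L\colon L\to\Pi M_{n_k}(\cz)/Ker\rho_\omega$ with $\Phi_L(p)\neq 0$. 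The task is to upgrade such a family of partial morphisms into a single genuine algebra morphism that still detects $p$. The natural device is to take an increasing exhaustion $\cz G=\bigcup_k L_k$ by finite dimensional subspaces each containing $\{1,p\}$, choose for each $k$ a partial morphism $\Phi_{L_k}$ with $\Phi_{L_k}(p)\neq 0$, and then assemble these into one unital morphism by the direct-sum/amplification construction already used in Theorem~\ref{grouptoalgebra} and abstracted in Proposition~\ref{sum of representations}. The key point I expect to exploit is that a partial morphism of $L_k$ can be realized by maps $\phi_{k}\colon L_k\to M_{n}(\cz)$ that are genuinely multiplicative on $L_k$ up to vanishing rank error; feeding a sequence of these with error tending to $0$ through Proposition~\ref{sum of representations} (or the explicit $\Pi\psi_k/Ker\rho_\omega$ construction) produces a bona fide unital morphism $\Theta$ whose kernel is controlled by the individual $\Phi_{L_k}$.

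The main obstacle, and where group structure is essential, is ensuring that the assembled morphism still satisfies $\Theta(p)\neq 0$ while being \emph{multiplicative on all of $\cz G$}, not merely on each $L_k$ separately. A partial morphism of $L_k$ need not extend multiplicatively, so one cannot simply glue. Here I would invoke Proposition~\ref{invertible in ultraproduct}: in a group algebra the distinguished elements $u_g$ are invertible, so any partial morphism sends each $u_g\in L_k$ into the unitary group $\Pi_{k\to\omega}GL_{n_k}(\cz)/d_\omega$, and hence restricts to a genuine \emph{group} morphism on the finitely many $g$ with $u_g\in L_k$. This lets me recover actual group homomorphisms $G\to\Pi GL/d_\omega$ detecting $p$ in the sense that $\Theta(p)\neq 0$, and then extend each to the group algebra via $\widetilde{\Theta}$ as in the Notation preceding Theorem~\ref{grouptoalgebra}; these global extensions are automatically algebra morphisms. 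Applying Proposition~\ref{sum of representations} to this family of global morphisms $\{\widetilde{\Theta}_k\}$ yields a single unital algebra morphism $\Theta$ with $\rho_\omega(\Theta(p))>0$, completing the inclusion.

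In summary, the proof reduces to: (i) the trivial observation that restrictions of global morphisms are partial morphisms, giving $RR\subseteq SR$; and (ii) using the invertibility of group elements (Proposition~\ref{invertible in ultraproduct}) to convert the partial morphisms witnessing $p\notin RR(\cz G)$ into \emph{global} group morphisms, which extend to algebra morphisms witnessing $p\notin SR(\cz G)$, with Proposition~\ref{sum of representations} assembling them. The hard part is precisely step~(ii): the passage from ``multiplicative only on $L$'' to ``multiplicative on $\cz G$'', and it is exactly the group-algebra hypothesis—via invertibility of the $u_g$—that makes this passage possible and explains why the equality $SR(\cz G)=RR(\cz G)$ is special to group algebras rather than general algebras.
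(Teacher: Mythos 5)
Your first inclusion $RR(\cz G)\subseteq SR(\cz G)$ is correct and is indeed the trivial direction, valid for any algebra. The reverse inclusion, however, has a genuine gap at exactly the point you flag as the hard part. You propose to convert the partial morphisms $\Phi_{L_k}$ with $\Phi_{L_k}(p)\neq 0$ into global group morphisms $G\to\Pi GL_{n}(\cz)/d_\omega$ and then feed their algebra extensions into Proposition \ref{sum of representations}. But a single partial morphism, being multiplicative only on a finite subset of $G$, does not yield a global group morphism; the only way to globalize is to take an ultraproduct over the exhaustion, which produces \emph{one} morphism $\Psi=\Pi_{i\to\omega}\Psi_i$, and this passage does not preserve the property of detecting $p$. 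Concretely, if $p=u_g-u_h$ and $d_\omega(\Phi_{L_i}(u_g),\Phi_{L_i}(u_h))\to 0$ as $i\to\infty$, then every $\Phi_{L_i}(p)\neq 0$ and yet $\widetilde\Psi(p)=0$. Proposition \ref{sum of representations} cannot repair this: it gives $\rho_\omega(\Theta(x))=\sum_k 2^{-k}\rho_\omega(\widetilde\Theta_k(x))$, which is positive only if some \emph{global} $\widetilde\Theta_k$ already satisfies $\widetilde\Theta_k(p)\neq 0$ --- and that is precisely what your construction has not produced. Invertibility of the $u_g$ (Proposition \ref{invertible in ultraproduct}) only tells you that the images land in $\Pi GL_{n}(\cz)/d_\omega$; it does not extend a partially multiplicative map to a globally multiplicative one, and it gives no lower bound on the rank of a linear combination of those images.

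The paper closes this gap with two ingredients absent from your outline. First, Proposition \ref{large distance} (rank amplification) replaces each $\Phi_i$ by a partial morphism $\Psi_i$ for which distinct images of group elements are at distance at least $1/4$; this dichotomy guarantees that $\Psi(u_g)=\Psi(u_h)$ in the ultraproduct iff $\Psi_i(u_g)=\Psi_i(u_h)$ for $\omega$-many $i$, so the identification pattern on the finite support of $p$ is already realized at some finite stage $i_0$. Second --- and this is the step for which you have no substitute --- the proposition preceding the theorem identifies $SR(\cz G)$ with $Ker\tilde f$, where $f\colon G\to G/LSR(G)$. The argument is then by contradiction rather than by direct construction: if $p\in SR(\cz G)$, then $\tilde f(p)=0$, so the coefficients of $p$ sum to zero over each $LSR(G)$-coset; since $f(g)=f(h)$ forces $\Psi(u_g)=\Psi(u_h)$ and hence $\Phi_{i_0}(u_g)=\Phi_{i_0}(u_h)$, the coefficients collapse and $\Phi_{i_0}(p)=0$, contradicting the choice of $\Phi_{i_0}$. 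Without this detour through the linear sofic radical, knowing that the $\Psi(u_g)$ are pairwise far apart gives no control over whether $\sum a_g\Psi(u_g)$ vanishes, so the direct approach you sketch cannot be completed as stated.
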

\begin{proof}
Let $p\notin RR(\cz G)$ and assume that $p\in SR(\cz G)$. Let
$G=\bigcup_i B_i$ where $\{B_i\}_i$ is an increasing sequence of
finite subsets each containing the support of $p$ such that $1\in B_i$ and $B_i=B_i^{-1}$.

Let $\Phi_i\colon\cz(B_i\cup B_i^2)\to\Pi_{k\to\omega}
M_{n_{i,k}}/Ker\rho_\omega$ be a partial morphism such that
$\Phi_i(p)\neq 0$. Then $\Phi_i$ restricted to $B_i$ has its image
included in $\Pi_{k\to\omega}GL_{n_{i,k}}/d_\omega$. Now we can
apply Proposition \ref{large distance} to get a partial morphism
$\Psi_i:\cz(B_i)\to\Pi_{k\to\omega}M_{m_{i,k}}/Ker\rho_\omega$ such that for any
$g,h\in B_i$:
\begin{align*}
&\Phi_i(u_g)=\Phi_i(u_h)\Longrightarrow\Psi_i(u_g)=\Psi_i(u_h)\\
&\Phi_i(u_g)\neq\Phi_i(u_h)\Longrightarrow
d_\omega(\Psi_i(u_g),\Psi_i(u_h))\geqslant\frac14.
\end{align*}
We construct the ultraproduct of the family $\{\Psi_i\}_i$,
$\Psi=\Pi_{i\to\omega}\Psi_i/d_\omega$. Then $\Psi:\cz
G\to\Pi_{(i,k)\to\omega\otimes\omega} M_{m_{i,k}}/Ker
\rho_{\omega\otimes\omega}$ is a unital morphism. If
$\Psi(u_g)=\Psi(u_h)$ then $\lim_{i\to\omega}d_\omega
(\Psi_i(u_g),\Psi_i(u_h))=0$. The properties of $\Psi_i$ imply
that $\{i:\Psi_i(u_g)=\Psi_i(u_h)\}\in\omega$ in this case.

Let $f:G\to G/LSR(G)$ be the canonical projection used also in the
previous proposition. Then $f(g)=f(h)$ implies
$\Psi(u_g)=\Psi(u_h)$. As argued earlier $\Psi(u_g)=\Psi(u_h)$ iff
$\{i:\Psi_i(u_g)=\Psi_i(u_h)\}\in\omega$. Because the support of
$p$ is finite we can find an $i_0$ such that: $g,h\in supp\ p$ and
$f(g)=f(h)$ implies $\Psi_{i_0}(u_g)=\Psi_{i_0}(u_h)$. Then also
$\Phi_{i_0}(u_g)=\Phi_{i_0}(u_h)$.

By the previous proposition and our initial assumption that $p\in
SR(\cz G)$ we get $\tilde f(p)=0$. This implies that
$\Phi_{i_0}(p)=0$, which is a contradiction.
\end{proof}


\end{document}